\theoremstyle{plain}
\newtheorem{proposition}{Proposition}[section]
\newtheorem{theorem}[proposition]{Theorem}
\newtheorem{lemma}[proposition]{Lemma}
\newtheorem{corollary}[proposition]{Corollary}
\theoremstyle{definition}
\newtheorem{example}[proposition]{Example}
\newtheorem{definition}[proposition]{Definition}
\newtheorem{observation}[proposition]{Observation}
\theoremstyle{remark}
\newtheorem{remark}[proposition]{Remark}
\newtheorem{question}{Question}
\DeclareMathOperator{\Aff}{Aff}
\DeclareMathOperator{\Aut}{Aut}
\DeclareMathOperator{\Real}{Re}
\DeclareMathOperator{\Imaginary}{Im}
\DeclareMathOperator{\GL}{GL}
\DeclareMathOperator{\Id}{Id}
\DeclareMathOperator{\Hol}{Hol}
\DeclareMathOperator{\arcosh}{arcosh}
\DeclareMathOperator{\Span}{Span}
\DeclareMathOperator{\Cc}{\mathcal{C}}
\DeclareMathOperator{\Gc}{\mathcal{G}}
\DeclareMathOperator{\Hc}{\mathcal{H}}
\DeclareMathOperator{\Kc}{\mathcal{K}}
\DeclareMathOperator{\Oc}{\mathcal{O}}
\DeclareMathOperator{\Pc}{\mathcal{P}}
\DeclareMathOperator{\Sc}{\mathcal{S}}
\DeclareMathOperator{\Bb}{\mathbb{B}}
\DeclareMathOperator{\Cb}{\mathbb{C}}
\DeclareMathOperator{\Db}{\mathbb{D}}
\DeclareMathOperator{\Kb}{\mathbb{K}}
\DeclareMathOperator{\Nb}{\mathbb{N}}
\DeclareMathOperator{\Rb}{\mathbb{R}}
\DeclareMathOperator{\Xb}{\mathbb{X}}
\newcommand{\abs}[1]{\left|#1\right|}
\newcommand{\norm}[1]{\left\|#1\right\|}
\newcommand{\vect}[1]{\mathbf{#1}}
\begin{document}

\title[Obstructions to biholomorphisms]{Characterizing strong pseudoconvexity, obstructions to biholomorphisms, and Lyapunov exponents}
\author{Andrew Zimmer}\address{Department of Mathematics, College of William and Mary, Williamsburg, VA 23185}
\email{amzimmer@wm.edu}
\date{\today}
\keywords{}
\subjclass[2010]{}

\begin{abstract} In this paper we consider the following question: For bounded domains with smooth boundary, can strong pseudoconvexity be characterized in terms of the intrinsic complex geometry of the domain? Our approach to answering this question is based on understanding the dynamical behavior of real geodesics in the Kobayashi metric and allows us to prove a number of results for domains with low regularity.  For instance, we show that for convex domains with $C^{2,\epsilon}$ boundary strong pseudoconvexity can be characterized in terms of the behavior of the squeezing function near the boundary, the behavior of the holomorphic sectional curvature of the Bergman metric near the boundary, or any other reasonable measure of the complex geometry near the boundary. The first characterization gives a partial answer to a question of Forn{\ae}ss and Wold. As an application of these characterizations, we show that a convex domain with $C^{2,\epsilon}$ boundary which is biholomorphic to a strongly pseudoconvex domain is also strongly pseudoconvex.
\end{abstract}

\maketitle

\section{Introduction}

A domain in $\Cb^d$ with $C^2$ boundary is called strongly pseudoconvex if the Levi form of the boundary is positive definite. The Levi form is  extrinsic and in this paper we study the following question: 

\begin{question}\label{question:main}
For domains with $C^2$ boundary, can strong pseudoconvexity be characterized in terms of the intrinsic complex geometry of the domain?
\end{question}

Although strongly pseudoconvex domains form one of the most important classes of domains in several complex variables, it does not appear that Question~\ref{question:main} has been extensively studied. The only general results we know of are due to Bland~\cite{B1985, B1989}, who studies compactifications of complete simply connected non-positively curved K{\"a}hler manifolds whose curvature tensor approaches the curvature tensor of complex hyperbolic space in a controlled way. Under these conditions, Bland proves that the geodesic compactification has a natural CR-structure which is strongly pseudoconvex and uses this to construct bounded holomorphic functions. 

In this paper we will consider only domains in $\Cb^d$, but will avoid needing to control how fast the geometry of the domain approaches the geometry of complex hyperbolic space. We will also focus on the case of convex domains. Convexity is a strong geometric assumption, but in relation to Bland's results can be seen as a non-positive curvature condition. By assuming convexity we are also able to prove results about unbounded domains and domains whose boundary has low regularity.

Our approach to studying Question~\ref{question:main} is based on understanding the behavior of the real geodesics in the Kobayashi metric. Let $\Bb_d \subset \Cb^d$ denote the open unit ball and $K_{\Bb_d}$ denote the Kobayashi distance on $\Bb_d$. Then geodesics in $(\Bb_d, K_{\Bb_d})$ have the following properties:
\begin{enumerate}
\item if $\gamma_1, \gamma_2 : \Rb_{\geq 0} \rightarrow \Bb_d$ are geodesics and $\liminf_{s,t \rightarrow \infty} K_{\Bb_d}(\gamma_1(s), \gamma_2(t)) < \infty$, then there exists $T \in \Rb$ such that $\lim_{t \rightarrow \infty} K_{\Bb_d}(\gamma_1(t), \gamma_2(t+T)) =0$ and
\item if $\gamma_1, \gamma_2 : \Rb_{\geq 0} \rightarrow \Bb_d$ are geodesics and $\lim_{t \rightarrow \infty} K_{\Bb_d}(\gamma_1(t), \gamma_2(t)) =0 $, then
\begin{align*}
\lim_{t \rightarrow \infty} \frac{1}{t} \log K_{\Bb_d}(\gamma_1(t), \gamma_2(t)) = -2
\end{align*}
if  $\gamma_1, \gamma_2$ are contained in the same complex geodesic and 
\begin{align*}
\lim_{t \rightarrow \infty} \frac{1}{t} \log K_{\Bb_d}(\gamma_1(t), \gamma_2(t)) = -1
\end{align*}
otherwise. 
\end{enumerate}
The numbers $ \pm 2$, $ \pm 1$ are exactly the Lyapunov exponents of the geodesic flow on complex hyperbolic space. In Section~\ref{sec:lyapunov}, we will establish, for certain types of convex domains, a relationship between the ``Lyapunov exponents of the geodesic flow'' and the shape of the boundary. This relationship is fundamental in all the results of this paper.

\subsection{Domains biholomorphic to strongly pseudoconvex domains} One of our motivations for studying Question~\ref{question:main} is the following question of Forn{\ae}ss and Wold. 

\begin{question}(Forn{\ae}ss and Wold~\cite[Question 4.5]{FW2017}) Suppose $\Omega \subset \Cb^d$ is a bounded domain with $C^2$ boundary and $\Omega$ is biholomorphic to the unit ball in $\Cb^d$. Is $\Omega$ strongly pseudoconvex?
\end{question}

One can also ask the following more general question:

\begin{question}\label{question:main2} Suppose $\Omega_1, \Omega_2 \subset \Cb^d$ are bounded domains with $C^2$ boundary, $\Omega_1$ is strongly pseudoconvex, and $\Omega_2$ is biholomorphic to $\Omega_1$. Is $\Omega_2$ also strongly pseudoconvex?
\end{question}

When $\Omega_1$ and $\Omega_2$ both have $C^\infty$ boundary, Bell~\cite{B1981} answered the above question in the affirmative using deep analytic methods, namely condition (R) and Kohn's subelliptic estimates in weighted $L^2$-spaces. It does not appear that Bell's analytic approach can be used in the $C^2$ regularity case. 

Using the dynamical approach described above, we will establish the following partial answer to Question~\ref{question:main2}.

\begin{theorem}\label{thm:C2} Suppose $\Omega \subset \Cb^d$ is a bounded strongly pseudoconvex domain with $C^2$ boundary and $\Cc \subset \Cb^d$ is a convex domain biholomorphic to $\Omega$. If $\Cc$ has $C^{2,\alpha}$ boundary for some $\alpha > 0$, then every $x \in \partial\Cc$ is a strongly pseudoconvex point of $\partial \Cc$. 
\end{theorem}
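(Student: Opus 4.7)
The plan is to derive Theorem~\ref{thm:C2} as a short formal consequence of the paper's main characterization of strong pseudoconvexity for convex $C^{2,\alpha}$ domains via the squeezing function, combined with the biholomorphic invariance of that function.

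Let $F : \Omega \to \Cc$ be a biholomorphism. Recall that the squeezing function $s_D(z)$ of a bounded domain $D \subset \Cb^d$ is the supremum of those $r > 0$ for which there exists an injective holomorphic map $f : D \to \Bc_d$ with $f(z) = 0$ and $r\Bc_d \subset f(D)$. Pre-composing with $F^{-1}$ shows that $s_\Cc(F(z)) = s_\Omega(z)$ for all $z \in \Omega$. Since $\Omega$ is bounded, strongly pseudoconvex, and has $C^2$ boundary, the theorem of Deng--Guan--Zhang (with an independent contribution of Diederich--Forn{\ae}ss--Wold) gives $s_\Omega(z) \to 1$ as $z \to \partial \Omega$. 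A biholomorphism between bounded domains is proper, so $z_n \to \partial \Omega$ if and only if $F(z_n) \to \partial \Cc$. Therefore $s_\Cc(w) \to 1$ as $w \to \partial \Cc$.

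At this point I would invoke the characterization theorem advertised in the abstract: for a convex domain with $C^{2,\alpha}$ boundary, strong pseudoconvexity is equivalent to $s_\Cc(w) \to 1$ as $w \to \partial \Cc$. Applied to $\Cc$, this immediately yields the theorem. One could alternatively run the same argument with the holomorphic sectional curvature of the Bergman metric (or any other biholomorphically invariant measure of complex geometry that satisfies an analogous ``tends to ball value'' characterization), but the squeezing-function route is the cleanest because its invariance is tautological and Question~4.5 of Forn{\ae}ss--Wold is phrased in essentially this language.

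Thus the real content, and the main obstacle, lies not in the reduction above but in the underlying characterization theorem for convex $C^{2,\alpha}$ domains. The ``only if'' direction is the classical Deng--Guan--Zhang result. The ``if'' direction is where the Lyapunov-exponent analysis of Section~\ref{sec:lyapunov} must do the work: on a convex $C^{2,\alpha}$ domain, weak pseudoconvexity at a boundary point $p$ should force the contraction rates between pairs of real Kobayashi geodesics ending at $p$ to differ from the ball values $\{-1,-2\}$, and these anomalous exponents must be shown to obstruct the existence of near-isometric embeddings of neighborhoods of interior points into $\Bc_d$, hence to force $\limsup_{z \to p} s_\Cc(z) < 1$. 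The $C^{2,\alpha}$ hypothesis enters precisely to make the Lyapunov-exponent limits exist and depend only on the shape of $\partial \Cc$ at $p$, so the regularity loss in the H{\"o}lder exponent is the technical price for the dynamical argument.
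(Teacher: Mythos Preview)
Your reduction via the squeezing function is logically sound but differs from the paper's route, and it carries one genuine gap.

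\textbf{The boundedness gap.} Theorem~\ref{thm:C2} is stated (and the remark immediately following it emphasizes) with \emph{no} boundedness assumption on $\Cc$; a convex domain biholomorphic to a bounded strongly pseudoconvex domain can perfectly well be unbounded (think of the paraboloid model $\Pc_d$ of the ball). Your argument writes ``a biholomorphism between bounded domains is proper'' and then invokes Theorem~\ref{thm:squeezing}, which is stated only for \emph{bounded} convex domains with $C^{2,\alpha}$ boundary. So as written your proof covers only the bounded case. Properness of $F$ is not the issue (any biholomorphism is a homeomorphism, hence proper); the issue is that you are applying a theorem whose hypothesis you have not verified. One could try to extend Theorem~\ref{thm:squeezing} to unbounded $\Cc$---and indeed the rescaling proposition it rests on (Proposition~\ref{prop:recale_2plusAlpha}) does not require boundedness---but that extension is not what the paper states, and you would have to supply it.

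\textbf{Comparison with the paper.} The paper does not deduce Theorem~\ref{thm:C2} from the squeezing characterization. Instead it first proves the sharper $C^1$ result, Theorem~\ref{thm:C1}: for any $\epsilon>0$ and $R>0$ there is $C$ with $\delta_{\Cc}(z;v)\le C\,\delta_{\Cc}(z)^{1/(2+\epsilon)}$ on $\{\|z\|\le R\}$. Theorem~\ref{thm:C2} then falls out by picking $\epsilon<\alpha$: at a non-strongly-pseudoconvex boundary point of a $C^{2,\alpha}$ convex domain there is a complex tangential direction in which $\delta_{\Cc}(\xi+r\vect{n}(\xi);v)\ge C' r^{1/(2+\alpha)}$, contradicting the bound with exponent $1/(2+\epsilon)>1/(2+\alpha)$. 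This route is local (the $\delta$-estimate is on Euclidean balls), so unboundedness of $\Cc$ causes no trouble. Your squeezing route, by contrast, packages the Lyapunov analysis into a global biholomorphic invariant; it is conceptually clean, but you pay for that cleanliness with the boundedness hypothesis baked into Theorem~\ref{thm:squeezing}. Both approaches ultimately rest on the same dynamical obstruction (Proposition~\ref{prop:obstruction}), so your final paragraph correctly identifies where the content lies.
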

 
\begin{remark} Theorem~\ref{thm:C2} makes no assumptions about the boundedness of $\Cc$. \end{remark}

The dynamical approach also allows us to prove a theorem for convex domains with only $C^1$ boundary, but we need to introduce some additional notation.

\begin{definition}
For a domain $\Omega \subset \Cb^d$, a point $z \in \Omega$, and a non-zero vector $v \in \Cb^d$ define 
\begin{align*}
\delta_\Omega(z) = \inf\{ \norm{z-w} : w \in \partial \Omega \}
\end{align*}
and 
\begin{align*}
\delta_\Omega(z;v) = \inf\{ \norm{z-w} : w \in \partial \Omega \cap (z + \Cb \cdot v) \}.
\end{align*}
\end{definition}

We will then prove the following. 

\begin{theorem}\label{thm:C1}(see Section~\ref{sec:C1})
Suppose $\Omega \subset \Cb^d$ is a bounded strongly pseudoconvex domain with $C^2$ boundary and $\Cc \subset \Cb^d$ is a convex domain biholomorphic to $\Omega$. If $\Cc$ has $C^{1}$ boundary, then for every $\epsilon >0$ and $R> 0$ there exists a $C=C(\epsilon, R) \geq 1$ such that 
\begin{align*}
\delta_{\Cc}(z;v) \leq C \delta_{\Cc}(z)^{1/(2+\epsilon)}
\end{align*}
for all $z \in \Cc$ with $\norm{z} \leq R$ and all nonzero $v \in \Cb^d$.
\end{theorem}

\begin{remark} \ \begin{enumerate} \item Suppose $\Omega \subset \Cb^d$ is bounded, convex, and has $C^2$ boundary. Then $\Omega$ is strongly pseudoconvex if and only if there exists a $C \geq 1$ such that 
\begin{align*}
\delta_\Omega(z;v) \leq C\delta_\Omega(z)^{1/2}
\end{align*}
for all $z \in \Omega$ and all nonzero $v \in \Cb^d$. Thus the conclusion of Theorem~\ref{thm:C1} can be interpreted as saying $\Cc$ is ``almost'' strongly pseudoconvex. 
\item By picking $\epsilon < \alpha$, one sees that Theorem~\ref{thm:C2} is a corollary of Theorem~\ref{thm:C1}.
\end{enumerate}
\end{remark}

%
%

\subsection{The intrinsic complex geometry of a domain}

There are many ways to measure the complex geometry of a domain and in this subsection we describe how certain natural measures provide characterizations of strong pseudoconvexity amongst convex domains with $C^{2,\alpha}$ boundary. As we will describe in Subsection~\ref{subsec:failure}, a recent example of Forn{\ae}ss and Wold~\cite{FW2017} shows that all these characterizations fail for convex domains with $C^2$ boundary. 

\subsubsection{The squeezing function}

One natural intrinsic measure of the complex geometry of a domain is the squeezing function. Given a bounded domain $\Omega \subset \Cb^d$ let $s_\Omega : \Omega \rightarrow (0,1]$ be the \emph{squeezing function on $\Omega$}, that is 
\begin{align*}
s_\Omega(p) = \sup\{ r : & \text{ there exists an one-to-one holomorphic map } \\
& f: \Omega \rightarrow \Bb_d \text{ with } f(p)=0 \text{ and } r\Bb_d \subset f(\Omega) \}.
\end{align*}
Although only recently introduced, the squeezing function has a number of applications, see for instance~\cite{LSY2004,Y2009}. 

Work of Diederich, Forn{\ae}ss, and Wold~\cite[Theorem 1.1]{DFW2014} and Deng, Guan, and Zhang~\cite[Theorem 1.1]{DGZ2016} implies the following theorem. 

\begin{theorem}\cite{DFW2014, DGZ2016}\label{thm:sq_on_str}
If $\Omega \subset \Cb^d$ is a bounded strongly pseudoconvex domain with $C^2$ boundary, then 
\begin{align*}
\lim_{z \rightarrow \partial \Omega} s_\Omega(z) = 1.
\end{align*}
\end{theorem}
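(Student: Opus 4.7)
My plan is to prove this by Pinchuk-style boundary rescaling, arguing by contradiction and exploiting that the rescaled limit at a strongly pseudoconvex boundary point is the Siegel model, which is biholomorphic to the ball and hence has squeezing function identically $1$.

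Suppose for contradiction that $s_\Omega(z_n) \leq 1 - \epsilon$ for some sequence $z_n \to \xi \in \partial\Omega$. Using $C^2$ strong pseudoconvexity, I choose affine local coordinates near $\xi$ so that $\xi = 0$ and $\Omega$ is locally cut out by $\Real z_1 + H(z') + o(\|z\|^2) < 0$, with $H$ a positive-definite Hermitian form. Let $\xi_n \in \partial\Omega$ be a closest point to $z_n$, set $\delta_n := \|z_n - \xi_n\|$, and let $T_n$ be a complex affine isometry sending $\xi_n$ to $0$ and aligning the defining function at $\xi_n$ with the normal form above. Compose with the anisotropic dilation $\Lambda_n(w_1, w') := (w_1/\delta_n,\, w'/\sqrt{\delta_n})$ and set $F_n := \Lambda_n \circ T_n$. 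The $C^2$ expansion of the defining function then yields $F_n(z_n) \to q_0 := (-1, 0)$ together with convergence of $F_n(\Omega)$ to the Siegel domain $D_\infty := \{\Real w_1 + H(w') < 0\}$ in the local Hausdorff (Carath\'eodory kernel) sense.

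A Cayley transform identifies $D_\infty$ with $\Bc_d$, and post-composing with an automorphism of $\Bc_d$ arranges $q_0 \mapsto 0$, so $s_{D_\infty}(q_0) = 1$. Because the squeezing function is a biholomorphic invariant, $s_\Omega(z_n) = s_{F_n(\Omega)}(F_n(z_n))$, and deriving the desired contradiction reduces to the lower-semicontinuity estimate
\[
\liminf_{n \to \infty} s_{F_n(\Omega)}\bigl(F_n(z_n)\bigr) \;\geq\; s_{D_\infty}(q_0) \;=\; 1.
\]

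The main technical difficulty is precisely this lower semicontinuity. For each $r < 1$ and all sufficiently large $n$, one must produce a globally defined injective holomorphic map $f_n \colon F_n(\Omega) \to \Bc_d$ with $f_n(F_n(z_n)) = 0$ and $r\Bc_d \subset f_n(F_n(\Omega))$. The Cayley transform $\Phi \colon D_\infty \to \Bc_d$ furnishes such a map on any compact $K \Subset D_\infty$, which is contained in $F_n(\Omega)$ for large $n$ by kernel convergence, but extending $\Phi$ holomorphically to all of $F_n(\Omega)$ while keeping the image inside $\Bc_d$ requires global input. For strongly pseudoconvex $\Omega$, I would exploit the existence of holomorphic peak functions of sharp vanishing order at each boundary point, combined with a bumping construction of $\partial\Omega$ away from $\xi_n$, to produce rescaled global maps that approximate the Cayley transform on compact subsets of $D_\infty$. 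This globalization step is where I expect the main technical work of the proof to lie.
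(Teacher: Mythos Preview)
The paper does not actually prove this theorem. Theorem~\ref{thm:sq_on_str} is quoted from the literature (Diederich--Forn{\ae}ss--Wold and Deng--Guan--Zhang) and used as input; no argument for it appears anywhere in the paper. So there is nothing to compare your proposal against.

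That said, a comment on your outline: you have correctly located the entire difficulty. The rescaling and kernel convergence to the Siegel domain are routine, and the reduction to a lower-semicontinuity statement for the squeezing function under this convergence is the right diagnosis. But the squeezing function is only \emph{upper} semicontinuous on $\Xb_{d,0}$ in general (this is exactly what the present paper uses, e.g.\ in Proposition~\ref{prop:blow_up}), so the inequality you need does not come for free and genuinely requires the global embedding you describe. Your suggested ingredients---peak functions of sharp order at strongly pseudoconvex boundary points and a bumping of $\partial\Omega$---are precisely what the cited papers use to manufacture the global map $f_n:\Omega\to\Bc_d$ with large image; in effect, carrying out that ``globalization step'' \emph{is} the proof, not a preliminary to it. As written, your proposal is an accurate sketch of the strategy but stops short of the substantive content.
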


 Based on the above theorem, it seems natural to ask if the converse holds.

\begin{question}\label{question:squeezing} (Forn{\ae}ss and Wold~\cite[Question 4.2]{FW2017}) Suppose $\Omega  \subset \Cb^d$ is a bounded pseudoconvex domain with $C^k$ boundary for some $k > 2$. If 
\begin{align*}
\lim_{z \rightarrow \partial \Omega} s_\Omega(z) = 1,
\end{align*}
is $\Omega$ strongly pseudoconvex?
\end{question}

Surprisingly the answer is no when $k=2$: Forn{\ae}ss and Wold~\cite{FW2017} constructed a convex domain with $C^2$ boundary which is not strongly pseudoconvex, but the squeezing function still approaches one on the boundary. However, we will prove that a little bit more regularity is enough for an affirmative answer. 

\begin{theorem}\label{thm:squeezing}(see Subsection~\ref{subsec:squeezing})
For any $d \geq 2$ and $\alpha >0$, there exists some $\epsilon=\epsilon(d,\alpha) > 0$ such that: if $\Omega  \subset \Cb^d$ is a bounded convex domain with $C^{2,\alpha}$ boundary and 
\begin{align*}
s_\Omega(z) \geq 1-\epsilon
\end{align*}
outside a compact subset of $\Omega$, then $\Omega$ is strongly pseudoconvex. 
\end{theorem}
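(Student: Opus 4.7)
The plan is to argue by contradiction using Theorem~\ref{thm:C1} as the main input together with a scaling argument. Suppose the conclusion fails. Then there exist $\epsilon_n \downarrow 0$ and bounded convex domains $\Omega_n \subset \Cb^d$ with $C^{2,\alpha}$ boundary satisfying $s_{\Omega_n}(z) \geq 1-\epsilon_n$ off compact subsets, yet each $\Omega_n$ fails to be strongly pseudoconvex at some $p_n \in \partial \Omega_n$. After an affine normalization, take $p_n = 0$ with a fixed outward normal, and let $v_n$ be a unit complex tangent vector in the null space of the Levi form at $p_n$.

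The first step is a scaling and limit argument. Using the uniform $C^{2,\alpha}$ control on $\partial \Omega_n$ (with, if needed, complex-linear rescalings that normalize the real second fundamental form in the null direction), I would extract a local Hausdorff subsequential limit $\Omega_\infty$, a convex domain with $C^{2,\alpha}$ boundary at $p_\infty = 0$ and a null Levi direction $v_\infty$. Because $\epsilon_n \to 0$, the squeezing hypothesis passes to the limit: $\Omega_\infty$ has squeezing function identically equal to $1$ in a neighborhood of $p_\infty$, and a normal families argument on the extremal squeezing embeddings produces a biholomorphism $\Omega_\infty \to \Bc_d$ (or at least onto a bounded strongly pseudoconvex $C^2$ domain).

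Theorem~\ref{thm:C1} then applies to $\Omega_\infty$ and yields, for every $\epsilon > 0$, a constant $C(\epsilon)$ with
\begin{align*}
\delta_{\Omega_\infty}(z;v) \leq C(\epsilon)\, \delta_{\Omega_\infty}(z)^{1/(2+\epsilon)}
\end{align*}
for $z$ near $p_\infty$ and all nonzero $v$. On the other hand, because $\Omega_\infty$ is convex and $C^{2,\alpha}$ with null Levi direction $v_\infty$ at $p_\infty$, Taylor-expanding a defining function along $v_\infty$ gives $\rho(tv_\infty) = O(\abs{t}^{2+\alpha})$: the linear term vanishes as $v_\infty$ is complex tangent, the quadratic term vanishes by the null Levi property, and the Hölder remainder has order $2+\alpha$. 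Hence $\delta_{\Omega_\infty}(tv_\infty) \leq C\abs{t}^{2+\alpha}$ while $\delta_{\Omega_\infty}(tv_\infty; v_\infty) \asymp \abs{t}$, so along $z_k = t_k v_\infty$,
\begin{align*}
\delta_{\Omega_\infty}(z_k; v_\infty) \geq c\, \delta_{\Omega_\infty}(z_k)^{1/(2+\alpha)}.
\end{align*}
Choosing any $\epsilon < \alpha$ contradicts the upper bound from Theorem~\ref{thm:C1}.

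The main obstacle is the scaling and limit step: extracting a nontrivial limit $\Omega_\infty$ that retains a distinguished non-strongly-pseudoconvex boundary point \emph{and} preserves the squeezing condition strongly enough to invoke Theorem~\ref{thm:C1}. The Forn{\ae}ss–Wold example at $C^2$ shows that $\alpha>0$ must be used in an honest way, so the proof needs Hölder control both to obtain equicontinuity of the (inverses of the) extremal squeezing embeddings under the rescaling and to secure the $1/(2+\alpha)$ lower bound on $\delta_{\Omega_\infty}(\cdot; v_\infty)$ in the limit.
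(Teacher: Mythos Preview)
Your strategy has the right shape --- argue by contradiction, pass to a limit domain biholomorphic to the ball, then exploit the $C^{2,\alpha}$ regularity via a $1/(2+\alpha)$ versus $1/(2+\epsilon)$ exponent mismatch --- but the limit step as written does not go through, and the paper handles precisely this point in a different way.

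The gap is the one you flag yourself: there is no uniform $C^{2,\alpha}$ control on the sequence $(\Omega_n)$. These are \emph{different} domains, and the theorem imposes no bound on their $C^{2,\alpha}$ norms or on the compact sets off which the squeezing hypothesis holds. Normalizing the second fundamental form at $p_n$ does nothing for the H\"older seminorm of the second derivatives, so there is no Arzel\`a--Ascoli compactness producing a limit $\Omega_\infty$ that is simultaneously $C^{2,\alpha}$ at a distinguished boundary point and has squeezing function $\equiv 1$. (A secondary issue: your test points $z_k = t_k v_\infty$ lie in the complex tangent space at $p_\infty$, hence in a supporting real hyperplane of the convex domain, so $z_k \notin \Omega_\infty$; one has to approach along the inward normal and compare $\delta_{\Omega_\infty}(r\mathbf{n}; v_\infty)$ with $\delta_{\Omega_\infty}(r\mathbf{n}) = r$.)

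The paper circumvents the regularity problem by never asking the limit domain to be $C^{2,\alpha}$. For each $\Omega_n$ it performs an affine rescaling at the bad boundary point (Proposition~\ref{prop:recale_2plusAlpha}) that \emph{encodes} the $C^{2,\alpha}$ flatness as the numerical estimate $\delta_{\Cc_{n,\infty}}(rie_1; e_2) \le r^{1/(2+\alpha)}$ for $r\ge 1$, while forcing $\Cc_{n,\infty}$ into the fixed compact family $ie_1+\Kb_d$. The squeezing lower bound passes to all of $\Cc_{n,\infty}$ by upper semicontinuity, and a further limit in $n$ (inside the compact $\Kb_d$, no boundary smoothness needed) gives $\Cc_\infty$ biholomorphic to $\Bc_d$ with $\Cc_\infty \cap \Cb e_1$ a half-plane and the same $\delta$-bound. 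The contradiction is then with Proposition~\ref{prop:obstruction} rather than Theorem~\ref{thm:C1}. In short: extract the exponent $1/(2+\alpha)$ \emph{before} taking any limits, so that the limit need carry no boundary regularity whatsoever. This is packaged as Theorem~\ref{thm:upper_cont}, from which Theorem~\ref{thm:squeezing} follows in one line.
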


\begin{remark} Using a different argument, we previously gave an affirmative answer to Question~\ref{question:squeezing} for bounded convex domains with $C^\infty$ boundary~\cite{Z2016}. Moreover, Joo and Kim~\cite{JK2016} gave an affirmative answer for bounded finite type domains in $\Cb^2$ with $C^\infty$ boundary.  
\end{remark}

\subsubsection{Holomorphic sectional curvature of the Bergman metric} Another intrinsic measure of the complex geometry of a domain is the curvature of the Bergman metric.

Let $(X,J)$ be a complex manifold with K{\"a}hler metric $g$. If $R$ is the Riemannian curvature tensor of $(X,g)$, then the \emph{holomorphic sectional curvature} $H_g(v)$ of a nonzero vector $v$ is defined to be the sectional curvature of the 2-plane spanned by $v$ and $Jv$, that is 
\begin{align*}
H_g(v) := \frac{ R(v,Jv,Jv,v)}{\norm{v}_g^4}.
\end{align*}

A classical result of Hawley~\cite{H1953} and Igusa~\cite{I1954} says that if $(X,g)$ is a complete simply connected K{\"a}hler manifold with constant negative holomorphic sectional curvature, then $X$ is biholomorphic to the unit ball (also see Chapter IX, Section 7 in~\cite{KN1996}). Moreover, if $b_{\Bb_d}$ is the Bergman metric on the unit ball $\Bb_d \subset \Cb^d$, then $(\Bb_d, b_{\Bb_d})$ has constant holomorphic sectional curvature $-4/(d+1)$. Klembeck proved that the holomorphic sectional curvature of Bergman metric on a strongly pseudoconvex domain approaches $-4/(d+1)$ on the boundary.

\begin{theorem}[Klembeck~\cite{K1978}]\label{thm:klembeck}
Suppose $\Omega \subset \Cb^d$ is a bounded strongly pseudoconvex domain with $C^\infty$ boundary. Then 
\begin{align*}
\lim_{ z \rightarrow \partial \Omega} \max_{v \in T_z \Omega \setminus \{0\}} \abs{H_{b_\Omega}(v) - \frac{-4}{d+1}} = 0
\end{align*}
where $b_\Omega$ is the Bergman metric on $\Omega$. 
\end{theorem}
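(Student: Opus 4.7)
The plan is to combine Fefferman's asymptotic expansion of the Bergman kernel with a Pinchuk-type rescaling that models a neighborhood of each boundary point on the unit ball. Fix a sequence $z_n \to p \in \partial\Omega$. Because the Bergman metric is a biholomorphic invariant, I may apply a local holomorphic change of coordinates near $p$ to normalize the boundary: translating and unitarily rotating, I arrange that $p=0$, the outward normal is $\partial/\partial \Real z_d$, and strong pseudoconvexity produces a defining function of the form $\rho(z)=-\Real z_d+\abs{z'}^2+O(\abs{z}^3)$, where $z'=(z_1,\dots,z_{d-1})$. The second-order model at $0$ is the Siegel domain $S=\{\Real z_d>\abs{z'}^2\}$, which is biholomorphic to $\Bc_d$ and whose Bergman metric has constant holomorphic sectional curvature $-4/(d+1)$.

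Next, introduce the anisotropic dilations $\Lambda_\epsilon(z',z_d)=(\epsilon^{-1/2}z',\epsilon^{-1}z_d)$. Under these, the domains $\Omega_\epsilon:=\Lambda_\epsilon(\Omega)$ converge in the local $C^\infty$ topology of defining functions to $S$ as $\epsilon\to 0^+$, since the $O(\abs{z}^3)$ error is killed by the anisotropic scaling. Choosing $\epsilon_n\to 0$ with $\epsilon_n$ comparable to $\abs{\rho(z_n)}$, one can arrange, along a subsequence, that $w_n:=\Lambda_{\epsilon_n}(z_n)$ lies in a fixed compact subset of $S$ and converges to some $w_\infty \in S$. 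The key analytic ingredient is Fefferman's expansion
\[
K_\Omega(z,z)=\frac{\varphi(z)}{\rho(z)^{d+1}}+\psi(z)\log\frac{1}{\abs{\rho(z)}},
\]
valid for $C^\infty$ strongly pseudoconvex $\Omega$ with $\varphi,\psi \in C^\infty(\overline\Omega)$ and $\varphi>0$ on $\partial\Omega$. Tracking this expansion, and the analogous asymptotics for $\partial_{z^I}\partial_{\bar z^J}K_\Omega$, under $\Lambda_{\epsilon_n}$ together with the transformation rule for Bergman kernels under biholomorphism, one sees that the rescaled kernels converge in $C^\infty_{\mathrm{loc}}(S\times S)$ to $K_S$.

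Since the Bergman metric equals $\partial\bar\partial\log K$ and the holomorphic sectional curvature is a rational expression in derivatives of $\log K$ up to order four, smooth convergence of the kernels implies smooth convergence of the Bergman metrics and of their curvatures. In particular $H_{b_{\Omega_{\epsilon_n}}}(w_n;v_n)\to H_{b_S}(w_\infty;v_\infty)=-4/(d+1)$ for any tangent vectors $v_n\to v_\infty$. The biholomorphic invariance of $H_{b_\Omega}$ then converts this back to the statement that $H_{b_\Omega}(z_n;\cdot)\to -4/(d+1)$; the uniformity over $v\in T_{z_n}\Omega\setminus\{0\}$ follows by homogeneity in $v$ together with a compactness argument on the unit sphere, and the uniformity in the base point $z_n\to p$ comes from the fact that the normalization in Step one depends smoothly on $p\in\partial\Omega$, which is possible because $\partial\Omega$ is $C^\infty$ and compact.

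The main obstacle is justifying the smooth convergence of the Bergman kernels under the scaling $\Lambda_\epsilon$: one needs not only the convergence of $K_{\Omega_\epsilon}\to K_S$ on compacta, but locally uniform convergence together with derivatives up to order four. This is precisely where the $C^\infty$ boundary regularity hypothesis is essential, as it is what guarantees the smoothness of the coefficients $\varphi$ and $\psi$ in Fefferman's expansion; without it, derivatives of $K_\Omega$ cannot be controlled uniformly up to the boundary, and the argument breaks down.
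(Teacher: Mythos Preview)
The paper does not contain a proof of this statement. Theorem~\ref{thm:klembeck} is quoted from Klembeck~\cite{K1978} as background and motivation for the converse result (Theorem~\ref{thm:bergman}); the author neither reproves it nor sketches an argument. So there is no ``paper's own proof'' to compare your proposal against.

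That said, your sketch is broadly in the spirit of how this result is established. Klembeck's original argument works more directly: he plugs Fefferman's expansion $K_\Omega(z,z)=\varphi\rho^{-(d+1)}+\psi\log(1/\abs{\rho})$ into the formula $b_{\Omega}=\partial\bar\partial\log K_\Omega$ and then into the curvature tensor, and computes the leading asymptotics by hand, showing they match those of the ball. Your approach packages the same analytic input (Fefferman's expansion with $C^\infty$ coefficients) inside a rescaling/normal-families argument, which is closer to later treatments (e.g.\ Kim--Yu). Either route needs the same essential ingredient you correctly isolate: uniform control of the Bergman kernel and its derivatives up to order four near the boundary, which is exactly what Fefferman's $C^\infty$ expansion provides. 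One point to tighten in your write-up is the step ``tracking this expansion \dots under $\Lambda_{\epsilon_n}$'': you should state explicitly the homogeneity of each term under the anisotropic dilation and check that the log term and the $O(\abs{z}^3)$ remainder, together with their derivatives, scale away; this is straightforward but is where the actual work lies.
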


We will prove the following converse to Klembeck's theorem:

\begin{theorem}\label{thm:bergman}(see Subsection~\ref{subsec:bergman})
For any $d \geq 2$ and $\alpha >0$, there exists some $\epsilon=\epsilon(d,\alpha) > 0$ such that: if $\Omega  \subset \Cb^d$ is a bounded convex domain with $C^{2,\alpha}$ boundary and 
\begin{align*}
\max_{v \in T_z \Omega \setminus \{0\}} \abs{H_{b_\Omega}(v) - \frac{-4}{d+1}} \leq \epsilon
\end{align*}
outside a compact subset of $\Omega$, then $\Omega$ is strongly pseudoconvex. 
\end{theorem}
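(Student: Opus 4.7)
I would argue by contradiction via a Frankel--Pinchuk style rescaling at a Levi-degenerate boundary point, in the spirit of the dynamical approach of this paper and paralleling the proof of Theorem~\ref{thm:squeezing}. Fix $d$ and $\alpha$, and suppose no $\epsilon$ as claimed exists. Then there is a sequence of bounded convex domains $\Omega_n \subset \Cb^d$ with $C^{2,\alpha}$ boundary and compact subsets $K_n \subset \Omega_n$ such that
\[
\sup_{z \in \Omega_n \setminus K_n} \max_{v \in T_z\Omega_n \setminus \{0\}} \left| H_{b_{\Omega_n}}(v) + \frac{4}{d+1} \right| \leq \frac{1}{n},
\]
while each $\Omega_n$ fails to be strongly pseudoconvex. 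For each $n$, fix a Levi-degenerate $p_n \in \partial \Omega_n$ and an interior sequence converging to $p_n$; by a diagonal choice pick $z_n \in \Omega_n \setminus K_n$ with $z_n \to p_n$.

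Apply affine normalizations $A_n \in \Aff(\Cb^d)$ near $p_n$ (translate $p_n \mapsto 0$, rotate so the inward normal aligns with a fixed coordinate axis, and dilate by $1/\delta_{\Omega_n}(z_n)$ so the rescaled basepoint has bounded distance to $\partial(A_n\Omega_n)$). The uniform $C^{2,\alpha}$ control combined with convexity yields, after passing to a subsequence, a local Hausdorff limit $A_n(\Omega_n) \to \hat\Omega$ with $\hat\Omega \subset \Cb^d$ convex. Because the Levi form of $\partial \Omega_n$ degenerates at $p_n$, the tangential complex Hessian of the defining function vanishes in some direction, and this degeneracy survives the rescaling: the limit $\hat\Omega$ contains a complex affine line in its boundary (or, equivalently, splits off an affine factor), and in particular $\hat\Omega$ is \emph{not} biholomorphic to the ball $\Bc_d$.

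On the other hand, the holomorphic sectional curvature of the Bergman metric is invariant under affine biholomorphisms, so on any compact $\hat K \subset \hat\Omega$ one has
\[
\sup_{z \in \hat K} \max_{v \neq 0} \left|H_{b_{A_n(\Omega_n)}}(v) + \frac{4}{d+1}\right| \to 0.
\]
Uniform $L^2$-estimates from plurisubharmonic exhaustions---available because the rescaled convex domains $A_n(\Omega_n)$ have uniformly controlled geometry in bounded regions---yield locally uniform convergence of Bergman kernels $K_{A_n(\Omega_n)} \to K_{\hat\Omega}$, hence of Bergman metrics and their curvatures. Thus $H_{b_{\hat\Omega}}\equiv -4/(d+1)$ on all of $\hat\Omega$. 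Since $\hat\Omega$ is convex, $b_{\hat\Omega}$ is a complete K\"ahler metric, so by the Hawley~\cite{H1953}--Igusa~\cite{I1954} theorem $\hat\Omega$ is biholomorphic to $\Bc_d$, contradicting the previous paragraph.

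The main obstacle is the Bergman-kernel convergence step under the weak $C^{2,\alpha}$ regularity, especially since the limit $\hat\Omega$ may be unbounded and the rescalings degenerate at infinity: one must control Bergman kernels uniformly along the scaling and ensure completeness of $b_{\hat\Omega}$ before invoking Hawley--Igusa. Convexity is what makes this tractable via classical pluripotential tools, and the extra H\"older regularity over $C^2$ is exactly what forces the rescaling limit to encode enough non-spherical boundary geometry to preclude $\hat\Omega \simeq \Bc_d$---consistent with the Forn\ae ss--Wold $C^2$ counterexample to Question~\ref{question:squeezing}, which should equally well furnish a $C^2$ counterexample here.
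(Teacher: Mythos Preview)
Your overall architecture (contradiction via rescaling at a Levi-degenerate boundary point, passing curvature information to the limit, and invoking Hawley--Igusa) matches the paper's template in spirit, but the central step~3 is a genuine gap and the contradiction you aim for is the \emph{opposite} of the one the paper actually obtains.

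You assert that the Levi degeneracy ``survives the rescaling'' so that the limit $\hat\Omega$ contains a complex affine line in its boundary and hence is not biholomorphic to $\Bc_d$. This is not justified, and in fact the dichotomy is fatal either way. If you use the isotropic dilation you describe (scaling by $1/\delta_{\Omega_n}(z_n)$), then at a weakly pseudoconvex point the limit will typically contain a complex affine line \emph{in its interior}, so $\hat\Omega\notin\Xb_d$; but then the Bergman metric of $\hat\Omega$ is degenerate and the convergence/completeness needed for Hawley--Igusa fails, so step~4 collapses. If instead you rescale anisotropically so as to stay inside $\Xb_d$ (as one must), the limit need not retain any trace of the degeneracy: the Forn\ae ss--Wold example shows that blow-ups at weakly pseudoconvex $C^2$ points can perfectly well be biholomorphic to $\Bc_d$. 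So there is no a~priori reason the limit is not the ball.

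The paper's argument reverses your logic. It packages the Bergman-curvature hypothesis as an upper-semicontinuous intrinsic function on $\Xb_{d,0}$ (via the class $\Gc_M(\Omega)$ and the function $h_M$, using results from~\cite{Z2016}), so that the hypothesis passes to any $\hat\Omega\in\mathrm{BlowUp}(\Omega)$ and forces $\hat\Omega$ to be biholomorphic to $\Bc_d$. The contradiction then comes from Proposition~\ref{prop:obstruction}: the anisotropic rescaling at a non--strongly pseudoconvex point of a $C^{2,\alpha}$ boundary (Proposition~\ref{prop:recale_2plusAlpha}) produces a limit satisfying $\delta_{\hat\Omega}(rie_1;e_2)\le r^{1/(2+\alpha)}$, while any convex model of the ball with the normalizations (1)--(2) of Proposition~\ref{prop:obstruction} must satisfy $\frac{1}{r}\log\delta_{\hat\Omega}(ie^r e_1;e_2)\to 1/2$. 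The $C^{2,\alpha}$ hypothesis is used precisely to pin down the exponent $1/(2+\alpha)<1/2$, not to make the limit ``non-spherical'' as you suggest. Thus the missing idea is the Lyapunov-exponent obstruction of Section~\ref{sec:lyapunov}; without it, your scheme cannot separate the $C^{2,\alpha}$ case from the $C^2$ counterexample.
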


\subsubsection{K{\"a}hler metrics with controlled geometry} In Subsection~\ref{subsec:controlled_geom} we will introduce families of K{\"a}hler metrics, denoted by $\Gc_M(\Omega)$ for some $M > 1$, on a convex domain $\Omega$ which have controlled geometry relative to the Kobayashi metric. We will also show that there exists some $M_0 > 1$ such that the Bergman metric is always contained in $\Gc_M(\Omega)$ when $M \geq M_0$. Then we will prove the following generalization of Theorem~\ref{thm:bergman}.

\begin{theorem}\label{thm:gen_riem}(see Subsection~\ref{subsec:controlled_geom})
For any $d \geq 2$, $\alpha > 0$, and $M > 1$, there exists some $\epsilon = \epsilon(d,\alpha,M) > 0$ such that: if $\Omega \subset \Cb^d$ is a bounded convex domain with $C^{2,\alpha}$ boundary and there exists a metric $g \in \Gc_M(\Omega)$ with
\begin{align*}
\max_{v,w \in T_z \Omega \setminus \{0\}} \abs{H_{g}(v) - H_g(w)} \leq \epsilon
\end{align*}
outside a compact subset of $\Omega$, then $\Omega$ is strongly pseudoconvex.
\end{theorem}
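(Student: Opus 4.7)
The plan is to follow the broad strategy used to prove Theorem~\ref{thm:bergman}, but with the Bergman metric replaced by a general metric $g \in \Gc_M(\Omega)$; the argument is by contradiction and is driven by the Lyapunov-exponent machinery of Section~\ref{sec:lyapunov}. Suppose, for contradiction, that no such $\epsilon = \epsilon(d,\alpha,M)$ exists. Then for each $n \geq 1$ one can produce a bounded convex domain $\Omega_n \subset \Cb^d$ with $C^{2,\alpha}$ boundary, a compact set $K_n \subset \Omega_n$, and a metric $g_n \in \Gc_M(\Omega_n)$ such that
\[
\max_{v,w \in T_z\Omega_n \setminus \{0\}} \abs{H_{g_n}(v) - H_{g_n}(w)} \leq \frac{1}{n} \quad \text{for all } z \in \Omega_n \setminus K_n,
\]
while $\Omega_n$ is not strongly pseudoconvex; pick $\xi_n \in \partial\Omega_n$ at which the Levi form degenerates.

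For each $n$, choose a Kobayashi geodesic ray $\gamma_n \colon \Rb_{\geq 0} \to \Omega_n$ landing at $\xi_n$, together with a complex-tangential perturbation $\tilde\gamma_n$ asymptotic to $\gamma_n$. The results of Section~\ref{sec:lyapunov}, which express the asymptotic rate
\[
\limsup_{t \to \infty} \frac{1}{t} \log K_{\Omega_n}\bigl(\gamma_n(t), \tilde\gamma_n(t)\bigr)
\]
in terms of the order of contact of $\partial\Omega_n$ with its complex tangent plane at $\xi_n$, together with the uniform $C^{2,\alpha}$ control on the boundary, should force this exponent to differ from the strongly pseudoconvex value of $-1$ by some fixed $\eta > 0$ independent of $n$.

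On the other side, the definition of $\Gc_M(\Omega_n)$ in Subsection~\ref{subsec:controlled_geom} will guarantee that $g_n$ is $M$-bi-Lipschitz equivalent to the Kobayashi metric and that $g_n$ carries uniform bounds on the covariant derivatives of its curvature. Normalizing $g_n$ so that the near-constant value of $H_{g_n}$ outside $K_n$ equals a fixed $-c < 0$, Jacobi-field comparison in $g_n$ applied over the bounded time intervals permitted by the quasi-isometry will then show that nearby $g_n$-geodesics, and hence by the quasi-isometry also nearby Kobayashi geodesics, spread apart with exponent that can be made arbitrarily close to the complex hyperbolic value $-1$ as $n \to \infty$. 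Combined with a standard convex rescaling of $(\Omega_n, g_n)$ along $\gamma_n$ and extraction of a subsequential limit, this contradicts the separation $\eta > 0$ obtained in the previous paragraph.

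The main obstacle I anticipate is the step that converts the pointwise near-constancy of $H_{g_n}$ into a uniform quantitative estimate on how rapidly nearby $g_n$-geodesics converge; the bi-Lipschitz condition alone is not enough, and one has to leverage the covariant-derivative bounds packaged into $\Gc_M$ together with a quantitative form of the Hawley/Igusa rigidity for constant holomorphic sectional curvature. A secondary subtlety, shared with Theorems~\ref{thm:C2} and~\ref{thm:C1}, is obtaining the uniform Lyapunov defect $\eta > 0$ at a weakly pseudoconvex point when the boundary is only $C^{2,\alpha}$; this should follow from the quantitative shape/Lyapunov correspondence established in Section~\ref{sec:lyapunov}, applied uniformly in $n$ using the $C^{2,\alpha}$ bounds inherited from the hypothesis.
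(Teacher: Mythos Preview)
Your proposal has a genuine gap in how you plan to use Section~\ref{sec:lyapunov}. Proposition~\ref{prop:obstruction} is not a general ``Lyapunov exponent equals boundary order of contact'' statement for convex domains; it applies only to convex domains \emph{already known to be biholomorphic to the ball}. So you cannot invoke it on each $\Omega_n$ to extract a uniform Lyapunov defect $\eta>0$ at the weakly pseudoconvex point $\xi_n$. Likewise, the second half of your argument --- Jacobi-field comparison in $g_n$ and a quantitative Hawley--Igusa rigidity --- is precisely the step you yourself flag as the main obstacle, and the paper does not carry out anything of this sort.

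The paper's route avoids both difficulties by passing to a limit domain rather than working directly in $\Omega_n$. It packages the curvature hypothesis into an intrinsic function $h_M$ on $\Xb_{d,0}$ (Definition in Subsection~\ref{subsec:controlled_geom}), uses that $-h_M$ is upper semicontinuous and that $h_M(\Cc,x)=0$ forces $\Cc$ to be biholomorphic to the ball (both cited from \cite{Z2016}), and then applies the general Theorem~\ref{thm:upper_cont}. The machinery behind Theorem~\ref{thm:upper_cont} is: from a non--strongly-pseudoconvex $C^{2,\alpha}$ domain, Proposition~\ref{prop:recale_2plusAlpha} produces a blow-up $\Cc_\infty$ satisfying $\delta_{\Cc_\infty}(rie_1;e_2)\le r^{1/(2+\alpha)}$; semicontinuity of $h_M$ forces $h_M(\Cc_\infty,\cdot)=0$, hence $\Cc_\infty$ is biholomorphic to the ball; and only now does Proposition~\ref{prop:obstruction} apply, giving $\delta_{\Cc_\infty}(rie_1;e_2)\sim r^{1/2}$, a contradiction.

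In short: the rigidity is used only in the limit (exact Hawley--Igusa, not a quantitative version), and the Lyapunov/shape correspondence is applied only to the limit domain (where the ``biholomorphic to the ball'' hypothesis of Proposition~\ref{prop:obstruction} is in force). Your direct geodesic-comparison strategy would require new ingredients not in the paper.
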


\subsubsection{Other intrinsic measures of the complex geometry of a domain} Theorem~\ref{thm:squeezing}, Theorem~\ref{thm:bergman}, and Theorem~\ref{thm:gen_riem} are particular cases of more general theorems which we state and prove in Section~\ref{sec:characterizing}. These more general theorems extend Theorem~\ref{thm:squeezing}, Theorem~\ref{thm:bergman}, and Theorem~\ref{thm:gen_riem}  to essentially any intrinsic measure of the complex geometry of a domain. 

\subsection{Some notations}
\begin{enumerate}
\item For $z \in\Cb^d$, let $\norm{z}$  denote the standard Euclidean norm.
\item For a point $z\in \Cb^d$ and $r > 0$, let 
\begin{align*}
\Bb_d(z;r) = \{ w \in \Cb^d : \norm{w-z} < r\}.
\end{align*}
\item $\Db \subset \Cb$ will denote the open unit disk and $\Bb_d := \Bb_d(0;1) \subset \Cb^d$ will denote the open unit ball. 
\item Let
\begin{align*}
\Db_1 = \{ z \in \Cb : \abs{ {\rm Im}(z)} + \abs{ { \rm Re}(z)} < 1\}.
\end{align*}
\item If $\Cc \subset \Cb^d$ is a convex domain with $C^1$ boundary and $\xi \in \partial \Cc$ let 
\begin{align*}
T_{\xi}^{\Cb} \partial \Cc \subset \Cb^d
\end{align*}
denote the complex tangent space of $\partial \Cc$ at $\xi$. Then since $\Cc$ is convex and open 
\begin{align*}
\left( \xi + T_{\xi}^{\Cb} \partial \Cc \right) \cap \Cc = \emptyset.
\end{align*}
\end{enumerate}

\subsection*{Acknowledgments}  I would like to thank the referee for a number of comments and corrections which improved the present work. This material is based upon work supported by the National Science Foundation under grants DMS-1400919 and DMS-1760233.

\section{Lyapunov exponents and the shape of the boundary}\label{sec:lyapunov}

In this section we establish a relationship between the ``Lyapunov exponents of the geodesic flow'' and the shape of the boundary. This relationship allows us to prove the following result. 

\begin{proposition}\label{prop:obstruction}
Suppose $d \geq 2$ and $\Cc \subset \Cb^d$ is a convex domain with the following properties:
\begin{enumerate}
\item $\Cc \cap\Span_{\Cb} \left\{ e_2, \dots, e_d \right\} = \emptyset$, 
\item $\Cc \cap \Cb \cdot e_1 = \{ z e_1 : \Imaginary(z) > 0\}$, and
\item $\Cc$ is biholomorphic to the unit ball.
\end{enumerate}
Then
\begin{align*}
\lim_{r \rightarrow \infty}\frac{1}{r} \log \delta_{\Cc}(ie^{r}e_1; v) = 1/2
\end{align*}
for all $v \in \Span_{\Cb} \left\{ e_2, \dots, e_d \right\}$.
\end{proposition}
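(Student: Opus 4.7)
The plan is to use a rescaling argument in the spirit of Pinchuk. For $r \in \Rb$ let $T_r \in \GL_d(\Cb)$ be the $\Cb$-linear map $T_r(z_1, z_2, \ldots, z_d) = (e^{-r} z_1, e^{-r/2} z_2, \ldots, e^{-r/2} z_d)$ and set $\Cc_r := T_r(\Cc)$. The map $T_r$ sends the complex line $\Cb e_1$ into itself by the positive real scalar $e^{-r}$ (which preserves the upper half-plane) and maps $\Span_{\Cb}\{e_2, \ldots, e_d\}$ into itself, so each $\Cc_r$ still satisfies the three hypotheses of the proposition. Moreover $T_r(ie^r e_1) = ie_1$, and since $\delta_\Cc(z;v)$ depends on $v$ only through the complex line $z + \Cb v$, a direct transformation-law computation gives
$$\delta_{\Cc_r}(ie_1; v) = e^{-r/2} \delta_{\Cc}(ie^r e_1; v).$$
It therefore suffices to show that $\delta_{\Cc_r}(ie_1; v)$ stays bounded between two positive constants as $r \to \infty$.

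To extract limits, I would first note that the convexity of $\Cc$ together with its disjointness from the complex hyperplane $\Span_{\Cb}\{e_2, \ldots, e_d\}$ and the normalization of $\Cc \cap \Cb e_1$ force, by a real separation argument, the containment $\Cc \subset \{\Imag z_1 > 0\}$, and this containment is preserved under $T_r$. So $\{\Cc_r\}$ is a family of convex domains uniformly contained in $\{\Imag z_1 > 0\}$ and all containing the ray $\{ite_1 : t > 0\}$. By Blaschke-type compactness of convex sets in the local Hausdorff topology, along any subsequence $r_n \to \infty$ one can pass to a further subsequence with $\Cc_{r_n}$ converging to a convex domain $\Cc_\infty$.

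The technical heart of the argument --- and the main obstacle --- is to show that this limit $\Cc_\infty$ is still biholomorphic to $\Bc_d$. The strategy is to consider the biholomorphisms $\phi_r := \phi \circ T_r^{-1} : \Cc_r \to \Bc_d$ (where $\phi : \Cc \to \Bc_d$ is the hypothesized biholomorphism) post-composed with automorphisms of $\Bc_d$ chosen to enforce $\phi_r(ie_1) = 0$ together with a compact normalization on the derivative $d\phi_r(ie_1)$ (for instance by fixing $d\phi_r(ie_1)(e_1)$ to lie on a preferred ray and using the residual $U(d-1)$ freedom). After this normalization, the family $\{\phi_r\}$ and the corresponding family of inverses are normal (by Montel, since each maps into a bounded region), and extracting further subsequences gives a limit $\phi_\infty : \Cc_\infty \to \Bc_d$; the delicate point is to verify, using the fixed normalization of the derivative and the openness of $\Bc_d$, that $\phi_\infty$ does not degenerate and really is a biholomorphism whose inverse is the limit of the $\phi_r^{-1}$.

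Once $\Cc_\infty$ is known to be biholomorphic to $\Bc_d$, it is in particular Kobayashi-hyperbolic and so contains no complex line; in particular the line $ie_1 + \Cb v$ exits $\Cc_\infty$, forcing $\delta_{\Cc_\infty}(ie_1; v) < \infty$, while $ie_1 \in \Cc_\infty$ and openness yield $\delta_{\Cc_\infty}(ie_1; v) > 0$. Local Hausdorff convergence then gives $\delta_{\Cc_{r_n}}(ie_1;v) \to \delta_{\Cc_\infty}(ie_1; v) \in (0, \infty)$, and since every subsequential limit is positive and finite the full family $\{\delta_{\Cc_r}(ie_1; v)\}_{r \geq 0}$ must be bounded between two positive constants. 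Combined with the transformation law above this gives
$$\lim_{r \to \infty} \frac{1}{r} \log \delta_{\Cc}(ie^r e_1; v) = 1/2.$$
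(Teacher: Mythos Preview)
Your rescaling approach is natural, and the transformation law $\delta_{\Cc_r}(ie_1;v)=e^{-r/2}\delta_{\Cc}(ie^{r}e_1;v)$ is correct, so the problem does reduce to showing $\delta_{\Cc_r}(ie_1;v)$ stays between two positive constants. The difficulty is that the ``delicate point'' you flag is exactly the content of the proposition, and the normalization you propose does not resolve it. After fixing $\phi_r(ie_1)=0$, the remaining automorphisms of $\Bc_d$ preserving $0$ are unitary, so post-composition by them does not change $\norm{d\phi_r(ie_1)(v)}$. But since $\phi_r$ is a Kobayashi isometry and $k_{\Bc_d}(0;\cdot)=\norm{\cdot}$, one has $\norm{d\phi_r(ie_1)(v)}=k_{\Cc_r}(ie_1;v)$, and for convex domains $k_{\Cc_r}(ie_1;v)\asymp \norm{v}/\delta_{\Cc_r}(ie_1;v)$. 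Thus controlling the derivative of $\phi_r$ is equivalent to controlling $\delta_{\Cc_r}(ie_1;v)$, which is what you are trying to prove. Concretely: if $\delta_{\Cc_{r_n}}(ie_1;v)\to\infty$ along a subsequence, then $d\phi_{r_n}(ie_1)(v)\to 0$ and the limit $\phi_\infty$ is forced to be degenerate; and the inverse maps $\psi_r=\phi_r^{-1}$ map $\Bc_d$ into the \emph{unbounded} domains $\Cc_r\subset\{\Imaginary z_1>0\}$ (a half-space whose Kobayashi metric is degenerate in the $e_2,\dots,e_d$ directions), so there is no Montel-type bound on $\psi_r$ available. In the opposite direction, if $\delta_{\Cc_{r_n}}(ie_1;v)\to 0$ then there is no fixed open neighborhood of $ie_1$ contained in all the $\Cc_{r_n}$, so one cannot even run Montel for the $\phi_{r_n}$ near $ie_1$. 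Either way the argument is circular as written.

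The paper takes an entirely different route: it writes down explicit Kobayashi geodesic rays $\gamma(t)=e^{2t}ie_1$ and $\gamma_v(t)=\delta v+(\alpha_v+e^{2t})ie_1$ in $\Cc$, transports them to $\Bc_d$ via the biholomorphism, and invokes the Lyapunov-exponent dichotomy for asymptotic geodesics in complex hyperbolic space (rate $-2$ when the rays share a complex geodesic, $-1$ otherwise). Elementary Kobayashi estimates then sandwich $K_{\Cc}(\gamma(t),\gamma_v(s_{t,v}))$ between constant multiples of $1/\delta_{\Cc}(e^{2t}ie_1;v)$, which converts the rate $-1$ directly into the exponent $1/2$. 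This bypasses any need to control subsequential limits of the rescaled domains.
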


\begin{remark} The unit ball is biholomorphic to the convex domain 
\begin{align*}
\Pc_d = \left\{ (z_1, \dots, z_d) \in \Cb^d : \Imaginary(z_1) > \sum_{i=2}^d \abs{z_i}^2 \right\}
\end{align*}
and this domain satisfies:
\begin{enumerate}
\item[(a)] $\Pc_d \cap\Span_{\Cb} \left\{ e_2, \dots, e_d \right\} = \emptyset$, 
\item[(b)] $\Pc_d \cap \Cb \cdot e_1 = \{ z e_1 : \Imaginary(z) > 0\}$, and
\item[(c)]  $\delta_{\Pc_d}(ie^re_1; v) = e^{r/2}$ for all $r \in \Rb$ and $v \in \Span_{\Cb} \left\{ e_2, \dots, e_d \right\}$.
\end{enumerate}
Hence the above proposition states that if a convex domain is biholomorphic to the unit ball and satisfies conditions (a) and (b) above, then the convex domain asymptotically satisfies condition (c). 
\end{remark}

Before starting the proof of Proposition~\ref{prop:obstruction} we will recall some facts about the Kobayashi pseudo-metric on convex domains and geodesics in complex hyperbolic space. 

\subsection{The Kobayashi metric and distance} In this subsection we recall the definition of the Kobayashi pseudo-metric. A more thorough introduction can be found in~\cite{K2005}.

Given a domain $\Omega \subset \Cb^d$ the \emph{(infinitesimal) Kobayashi pseudo-metric} on $\Omega$ is the pseudo-Finsler metric
\begin{align*}
k_{\Omega}(x;v) = \inf \left\{ \abs{\xi} : f \in \Hol(\Delta, \Omega), \ f(0) = x, \ d(f)_0(\xi) = v \right\}.
\end{align*}
Royden~\cite[Proposition 3]{R1971} proved that the Kobayashi pseudo-metric is an upper semicontinuous function on $\Omega \times \Cb^d$. So, if $\sigma:[a,b] \rightarrow \Omega$ is an absolutely continuous curve (as a map $[a,b] \rightarrow \Cb^d$), then the function 
\begin{align*}
t \in [a,b] \rightarrow k_\Omega(\sigma(t); \sigma^\prime(t))
\end{align*}
is integrable and we can define the \emph{length of $\sigma$} to  be
\begin{align*}
\ell_\Omega(\sigma)= \int_a^b k_\Omega(\sigma(t); \sigma^\prime(t)) dt.
\end{align*}
One can then define the \emph{Kobayashi pseudo-distance} to be
\begin{multline*}
 K_\Omega(x,y) = \inf \left\{\ell_\Omega(\sigma) : \sigma\colon[a,b]
 \rightarrow \Omega \text{ is absolutely continuous}, \right. \\
 \left. \text{ with } \sigma(a)=x, \text{ and } \sigma(b)=y\right\}.
\end{multline*}
This definition is equivalent to the standard definition of $K_\Omega$ via analytic chains, see~\cite[Theorem 3.1]{V1989}.

Directly from the definition one obtains the following property of the Kobayashi pseudo-metric:

\begin{proposition} Suppose $\Omega_1 \subset \Cb^{d_1}$ and $\Omega_2 \subset \Cb^{d_2}$ are domains. If $f: \Omega_1 \rightarrow \Omega_2$ is a holomorphic map, then 
\begin{align*}
K_{\Omega_2}(f(z), f(w)) \leq K_{\Omega_1}(z,w)
\end{align*}
for all $z,w \in \Omega_1$. 
\end{proposition}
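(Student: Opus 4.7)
The plan is to reduce the distance-decreasing property to the corresponding infinitesimal statement and then integrate. First I would verify the infinitesimal version: for any $x \in \Omega_1$ and $v \in \Cb^{d_1}$,
\begin{equation*}
k_{\Omega_2}(f(x); df_x(v)) \leq k_{\Omega_1}(x;v).
\end{equation*}
This follows directly from the definition of $k_{\Omega_1}$. Indeed, given any holomorphic disk $g \in \Hol(\Db, \Omega_1)$ with $g(0) = x$ and $dg_0(\xi) = v$, the composition $f \circ g$ lies in $\Hol(\Db, \Omega_2)$, satisfies $(f \circ g)(0) = f(x)$, and $d(f \circ g)_0(\xi) = df_x(v)$. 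Hence $f \circ g$ is a candidate in the infimum defining $k_{\Omega_2}(f(x); df_x(v))$, and taking the infimum over $g$ yields the inequality.

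Next I would transfer this to lengths of curves. Given an absolutely continuous $\sigma : [a,b] \to \Omega_1$ with $\sigma(a) = z$ and $\sigma(b) = w$, the composition $f \circ \sigma$ is again absolutely continuous (as a map into $\Cb^{d_2}$, since $f$ is locally Lipschitz) with $(f \circ \sigma)^\prime(t) = df_{\sigma(t)}(\sigma^\prime(t))$ for almost every $t$. Applying the infinitesimal inequality pointwise yields
\begin{equation*}
k_{\Omega_2}\bigl((f\circ\sigma)(t); (f\circ\sigma)^\prime(t)\bigr) \leq k_{\Omega_1}\bigl(\sigma(t); \sigma^\prime(t)\bigr)
\end{equation*}
for a.e. $t$, and integrating over $[a,b]$ gives $\ell_{\Omega_2}(f \circ \sigma) \leq \ell_{\Omega_1}(\sigma)$.

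Finally, since $f \circ \sigma$ joins $f(z)$ to $f(w)$ in $\Omega_2$, it is a candidate in the infimum defining $K_{\Omega_2}(f(z), f(w))$, so $K_{\Omega_2}(f(z), f(w)) \leq \ell_{\Omega_1}(\sigma)$. Taking the infimum over all admissible $\sigma$ in $\Omega_1$ gives $K_{\Omega_2}(f(z), f(w)) \leq K_{\Omega_1}(z,w)$, as desired.

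The only mildly technical point, and the closest thing to an obstacle, is ensuring that the integration step is legitimate: one needs $k_{\Omega_2}(f \circ \sigma(t); (f\circ\sigma)^\prime(t))$ to be measurable and that $f \circ \sigma$ qualify as an admissible curve. Both are handled by Royden's upper semicontinuity of $k_{\Omega_2}$ (so the integrand is measurable) together with the fact that holomorphic maps between domains in $\Cb^d$ are locally Lipschitz, which preserves absolute continuity under composition.
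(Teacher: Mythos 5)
Your proof is correct. The paper states this proposition without proof, remarking only that it follows ``directly from the definition''; your argument --- prove the infinitesimal inequality by composing candidate disks with $f$, then integrate along absolutely continuous curves and take the infimum --- is precisely the standard argument that remark alludes to, and your note on Royden's upper semicontinuity and the local Lipschitz property of $f$ correctly addresses the measurability and admissibility issues.
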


For a general domain $\Omega$ it is very hard to determine if $(\Omega, K_\Omega)$ is a Cauchy complete metric space, but for convex domains there is a very simple (to state) characterization due to Barth. 

\begin{theorem}[{Barth~\cite[Theorem 1]{B1980}}]\label{thm:barth} Suppose $\Omega \subset \Cb^d$ is a convex domain. Then the following are equivalent:
\begin{enumerate}
\item $\Omega$ does not contain any complex affine lines, 
\item $K_\Omega$ is non-degenerate and hence a distance on $\Omega$, 
\item $K_\Omega$ is a proper Cauchy complete distance on $\Omega$,
\end{enumerate}
\end{theorem}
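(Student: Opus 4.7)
\medskip
\noindent\textbf{Proof plan for Theorem~\ref{thm:barth}.}

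The plan is to organize the argument as $(3) \Rightarrow (2) \Rightarrow (1) \Rightarrow (3)$. The first implication is immediate since a proper (in particular non-degenerate) distance function is a metric. For $(2) \Rightarrow (1)$ I would argue the contrapositive: if $\Omega$ contains a complex affine line $L$, then parameterizing $L$ by an affine isomorphism $f : \Cb \to L \subset \Omega$ and using the contracting property of the Kobayashi pseudo-metric (Proposition above) together with $K_\Cb \equiv 0$ gives $K_\Omega(f(z), f(w)) \leq K_\Cb(z,w) = 0$ for all $z,w \in \Cb$, so $K_\Omega$ is degenerate.

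The bulk of the work is $(1) \Rightarrow (3)$, and I would attack it by producing many bounded holomorphic functions. The main tool is Hahn--Banach: for every $x \in \Cb^d \setminus \overline{\Omega}$, convexity of $\Omega$ furnishes a real-linear $L : \Cb^d \to \Rb$ and a constant $c \in \Rb$ with $L|_\Omega < c \leq L(x)$. Extending $L$ to the $\Cb$-linear map $\tilde L(z) = L(z) - iL(iz)$, one has $\Omega \subset \{ \Real \tilde L < c\}$, and post-composing $\tilde L - c$ with a Cayley transform gives a holomorphic map $\phi_{L,c} : \Omega \to \Db$. The key claim is that the collection of such $\tilde L$ (arising from real supporting hyperplanes) separates points of $\Cb^d$: if $\tilde L(p) = \tilde L(q)$ for every support direction, then a careful convex-geometry argument shows the line $p + \Cb(q-p)$ is contained in $\Omega$, contradicting (1). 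Applied to $p \neq q$ this gives $K_\Omega(p,q) \geq K_\Db(\phi_{L,c}(p), \phi_{L,c}(q)) > 0$, establishing $(1) \Rightarrow (2)$.

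For the full strength of $(3)$, I would use the same family $\{\phi_{L,c}\}$ to show that $K_\Omega$-balls are relatively compact. Given $z_0 \in \Omega$ and a sequence $z_n$ leaving every compact subset of $\Omega$, one either has $z_n \to \xi \in \partial \Omega$ along a subsequence or $\norm{z_n} \to \infty$. In the first case, choose real supporting data $(L_\xi, c_\xi)$ at $\xi$; then $\phi_{L_\xi,c_\xi}(z_n) \to \partial \Db$, so the contracting property and completeness of the Poincar\'e metric give $K_\Omega(z_0, z_n) \geq K_\Db(\phi_{L_\xi,c_\xi}(z_0), \phi_{L_\xi,c_\xi}(z_n)) \to \infty$. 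The second (unbounded) case is the main obstacle: I need that condition (1) forces $\Omega$ to have finite width in every complex direction, so that one can still produce a $\phi_{L,c}$ along which the images of $z_n$ tend to $\partial \Db$. This follows from the same convex-geometry argument used to separate points: the absence of a complex line means that in every direction $v \in \Cb^d \setminus \{0\}$, the slice $\Omega \cap (z_0 + \Cb v)$ is a proper convex subset of $z_0 + \Cb v$, whose support data extends to a suitable $(L,c)$ on $\Cb^d$. Properness of the closed balls plus a standard length-space argument (a Cauchy sequence is $K_\Omega$-bounded, hence precompact, hence convergent) then yields Cauchy completeness, completing $(1) \Rightarrow (3)$.
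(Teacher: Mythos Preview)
The paper does not prove Theorem~\ref{thm:barth}; it is quoted as a result of Barth~\cite{B1980} and used as a black box. So there is no ``paper's own proof'' to compare against, and your outline is essentially a reconstruction of the classical argument.

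Your scheme $(3)\Rightarrow(2)\Rightarrow(1)\Rightarrow(3)$ is correct and the first two implications are handled cleanly. For $(1)\Rightarrow(3)$, your idea of manufacturing holomorphic maps $\phi_{L,c}:\Omega\to\Db$ from real supporting hyperplanes via $\tilde L(z)=L(z)-iL(iz)$ is exactly the right mechanism, and the point-separation claim is fine once you note that if $\tilde L$ were constant on the complex line through $p$ and $q$ then it would take the same value on the separating point $\xi\in\ell\setminus\Omega$, contradicting $\Real\tilde L(\xi)\geq c>\Real\tilde L(p)$.

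The one place where your write-up is too loose is the unbounded case of properness. ``Finite width in every complex direction'' is true but does not by itself give a \emph{single} $\phi_{L,c}$ along which $\phi_{L,c}(z_n)\to\partial\Db$; the sequence $z_n$ may wander through many directions. The clean fix is: pass to a subsequence with $z_n/\norm{z_n}\to v$; convexity gives $\Rb_{\geq 0}\,v\subset\Omega$, and if $\Cb v\not\subset\Omega$ pick $\xi\in\Cb v\setminus\Omega$ and a separating $(L,c)$. Then $\tilde L(v)\neq 0$ (since $\Real\tilde L(\xi)\geq c>0=\Real\tilde L(0)$ and $\xi\in\Cb v$), so $|\tilde L(z_n)|\to\infty$, and in a half-plane $K(\tilde L(z_0),w)\to\infty$ as $|w|\to\infty$. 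This forces $K_\Omega(z_0,z_n)\to\infty$, a contradiction; hence $\Cb v\subset\Omega$, violating (1). With this patch your argument goes through.
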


\begin{remark} To be precise, Theorem 1 in~\cite{B1980} only states that conditions (1) and (2) are equivalent to $K_\Omega$ being a proper distance on $\Omega$. However, for length spaces any proper distance is also Cauchy complete, see for instance Corollary 3.8 in~\cite[Chapter I]{BH1999}.
\end{remark}

\subsection{Basic estimates for the Kobayashi metric}

In this subsection we recall some basic estimates for the Kobayashi metric on convex domains. All these estimates are very well known, but we provide the short proofs for the reader's convenience. 

\begin{lemma}\label{lem:half_plane}
Suppose $\Omega \subset \Cb^d$ is a convex domain, $V \subset \Cb^d$ is a complex affine line, and $V \cap \Omega$ is a half plane in $V$.  Then
\begin{align*}
K_{\Omega}(z_1,z_2) = K_{ V \cap \Omega}(z_1,z_2)
\end{align*}
for all $z_1, z_2 \in V \cap \Omega$. 
\end{lemma}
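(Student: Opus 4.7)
The plan is to prove the two inequalities $K_\Omega \leq K_{V\cap\Omega}$ and $K_{V\cap\Omega} \leq K_\Omega$ separately. The first is immediate: the inclusion $\iota: V \cap \Omega \hookrightarrow \Omega$ is holomorphic, so the distance-decreasing property of the Kobayashi pseudo-distance gives $K_\Omega(z_1,z_2) \leq K_{V\cap\Omega}(z_1,z_2)$ for all $z_1,z_2 \in V \cap \Omega$.

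For the reverse inequality, the strategy is to construct a holomorphic "projection" $\Omega \to V \cap \Omega$ obtained from a complex affine functional. First I would use convexity to locate the right functional. Since $V \cap \Omega$ is a half plane in the complex line $V$, its boundary in $V$ is a real affine line $L$, and since $V \cap \Omega \subset \Omega$ while $L \cap (V \cap \Omega) = \emptyset$, we have $L \subset \partial \Omega$. By convexity of $\Omega$, the Hahn--Banach theorem (in $\Rb^{2d}$) supplies a real affine hyperplane $H$ containing $L$ that supports $\Omega$, i.e., $\Omega$ is contained in one of the open half spaces bounded by $H$.

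Next I would recognize $H$ as the zero set of the imaginary part of a complex affine functional. Every real affine hyperplane in $\Cb^d$ can be written as $H = \{z : \Imaginary \ell(z) = 0\}$ for some complex affine functional $\ell: \Cb^d \to \Cb$, and after adjusting signs we may assume $\Omega \subset \Hb := \{w \in \Cb : \Imaginary w > 0\}$ under $\ell$. The restriction $\ell|_V$ is a complex affine function on $V \cong \Cb$ that sends the real line $L$ to $\Rb$ and sends the open half plane $V \cap \Omega$ into $\Hb$; since $V \cap \Omega$ is two-real-dimensional, $\ell|_V$ is non-constant, hence a biholomorphism of $V \cap \Omega$ onto $\Hb$.

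Finally I would combine these facts using monotonicity. Viewing $\ell$ as a holomorphic map $\Omega \to \Hb$ gives
\begin{align*}
K_\Hb(\ell(z_1),\ell(z_2)) \leq K_\Omega(z_1,z_2),
\end{align*}
while the biholomorphism $\ell|_{V \cap \Omega} : V \cap \Omega \to \Hb$ yields
\begin{align*}
K_{V \cap \Omega}(z_1,z_2) = K_\Hb(\ell(z_1),\ell(z_2)).
\end{align*}
Chaining these gives the desired lower bound $K_{V \cap \Omega}(z_1,z_2) \leq K_\Omega(z_1,z_2)$. The only non-routine step is producing the supporting hyperplane of the correct form, which is guaranteed by convexity together with the fact that $H$ already contains the complex-dimension-one set $L$, forcing $\ell|_V$ to be an honest biholomorphism rather than a collapse.
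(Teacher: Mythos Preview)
Your argument is correct and follows the same strategy as the paper's proof. Both directions use the distance-decreasing property: the inclusion $V\cap\Omega\hookrightarrow\Omega$ for one inequality, and for the other a holomorphic map $\Omega\to V\cap\Omega$ built from a supporting real hyperplane $H$ through the boundary line $L=\partial_V(V\cap\Omega)$. The paper phrases this second map as a complex affine projection $P:\Cb^d\to V$ with fibers parallel to $H\cap iH$, so that $P|_V=\Id$ and $P(\Omega)\subset V\cap\Omega$; you instead take a complex affine functional $\ell$ with $H=\{\Imaginary\ell=0\}$ and factor through the biholomorphism $\ell|_{V\cap\Omega}:V\cap\Omega\to\Hb$. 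These are the same map up to identifying $V\cap\Omega$ with $\Hb$ via $\ell$, since the level sets of $\ell$ are precisely the translates of $H\cap iH$.

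Two minor wording issues, neither affecting validity: the reason $\ell|_V$ is non-constant is not that $V\cap\Omega$ is two-real-dimensional per se, but that $\ell$ takes values in $\Rb$ on $L\neq\emptyset$ and in $\Hb$ on $V\cap\Omega\neq\emptyset$, which is impossible for a constant; and in your final sentence $L$ is a real line, not a ``complex-dimension-one set.''
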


\begin{proof} By applying an affine transformation we may assume that
\begin{enumerate}
\item $V \cap \Omega = \{ (z,0,\dots,0) : { \rm Im}(z) > 0\}$ and
\item $\Omega \subset \{ (z_1,\dots, z_d) : { \rm Im}(z_1) > 0\}$.
\end{enumerate}

Applying the distance decreasing property of the Kobayashi metric to the inclusion map $V \cap \Omega \hookrightarrow \Omega$ implies that 
\begin{align*}
K_{\Omega}(z_1,z_2) \leq K_{ V \cap \Omega}(z_1,z_2)
\end{align*}
for all $z_1, z_2 \in V \cap \Omega$. 

Let $P: \Cb^d \rightarrow V$ denote the map $P(z_1,\dots, z_d) = (z_1,0,\dots, 0)$. Then $P(\Omega) = \Omega \cap V$ and $P(z) = z$ for $z \in V$. So applying the distance decreasing property of the Kobayashi metric to the projection map $P: \Omega \rightarrow V \cap \Omega$ implies that 
\begin{align*}
K_{ V \cap \Omega}(z_1,z_2)\leq K_{\Omega}(z_1,z_2)
\end{align*}
for all $z_1, z_2 \in V \cap \Omega$. 
\end{proof}

\begin{lemma}\label{lem:hyperplanes}
Suppose $\Omega \subset \Cb^d$ is a convex domain, $H \subset \Cb^d$ is a complex affine hyperplane such that $H \cap \Omega = \emptyset$, and $P:\Cb^d \rightarrow \Cb$ is an affine map with  $P^{-1}(0)=H$. Then for any $z_1, z_2 \in \Omega$ we have 
\begin{align*}
K_{\Omega}(z_1, z_2) \geq \frac{1}{2}\abs{ \log \abs{\frac{P(z_1)}{P(z_2)}}}.
\end{align*}
\end{lemma}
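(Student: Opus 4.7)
The plan is to push the inequality down to a one-dimensional problem via the distance-decreasing property of the Kobayashi pseudo-metric and then compute an explicit lower bound for the Kobayashi distance on a half-plane whose boundary passes through the origin.

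First I would observe that, since $P$ is complex affine and in particular holomorphic, the image $P(\Omega) \subset \Cb$ is an open convex set (complex affine maps are $\Rb$-linear up to translation, and they preserve convexity), and $0 \notin P(\Omega)$ because $P^{-1}(0) = H$ is disjoint from $\Omega$. By the Hahn--Banach separation theorem applied in $\Cb \cong \Rb^2$, there is an open half-plane $\mathbb{H} \subset \Cb$ with $\partial \mathbb{H}$ passing through $0$ and $P(\Omega) \subset \mathbb{H}$. Viewing $P$ as a holomorphic map $\Omega \to \mathbb{H}$, the distance-decreasing property gives
\begin{equation*}
K_\Omega(z_1, z_2) \geq K_{\mathbb{H}}\bigl(P(z_1), P(z_2)\bigr)
\end{equation*}
for all $z_1, z_2 \in \Omega$.

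The main task is then to show that for any open half-plane $\mathbb{H} \subset \Cb$ whose boundary contains $0$, and any $a, b \in \mathbb{H}$,
\begin{equation*}
K_\mathbb{H}(a,b) \geq \tfrac{1}{2}\bigl|\log|a/b|\bigr|.
\end{equation*}
After a rotation (which is a biholomorphism of $\Cb$ preserving $0$), I may assume $\mathbb{H} = \{ \Real(z) > 0\}$. On this right half-plane the principal logarithm $L(z) = \log z$ is a biholomorphism onto the strip $S = \{ w : \Imag(w) \in (-\pi/2, \pi/2)\}$, with $\Real L(z) = \log|z|$. Thus $K_{\mathbb{H}}(a,b) = K_S(L(a), L(b))$, and it suffices to prove $K_S(w_1, w_2) \geq \tfrac{1}{2}|\Real(w_1) - \Real(w_2)|$.

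To obtain this strip estimate I would use the explicit biholomorphism $w \mapsto e^{w+i\pi/2}$ from $S$ onto the upper half-plane $\Hb^+$, under which the Poincar\'e/Kobayashi metric $|dz|/(2\Imag z)$ on $\Hb^+$ pulls back to $|dw|/(2\cos(\Imag w))$ on $S$. Since $\cos(\Imag w) \leq 1$ on $S$, for any absolutely continuous curve $\sigma:[a,b] \to S$ from $w_1$ to $w_2$,
\begin{equation*}
\ell_S(\sigma) = \int_a^b \frac{|\sigma'(t)|}{2 \cos(\Imag \sigma(t))}\,dt \geq \int_a^b \frac{|\Real \sigma'(t)|}{2}\,dt \geq \tfrac{1}{2}|\Real(w_2)-\Real(w_1)|.
\end{equation*}
Taking the infimum over $\sigma$ yields the strip bound, which then pulls back to the half-plane bound on $\mathbb{H}$ and, combined with the distance-decreasing estimate above, gives the lemma.

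The only step with any real content is the one-dimensional strip computation; the main ``obstacle'' is really just correctly identifying the pulled-back metric from $\Hb^+$ and noticing that bounding $\cos(\Imag w)$ from above by $1$ isolates exactly the real-part length, which after the $\log$-coordinate corresponds to $\log|\cdot|$. Everything else is a direct application of the distance-decreasing property and elementary convex separation.
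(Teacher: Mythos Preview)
Your proof is correct and follows the same overall strategy as the paper: reduce via the distance-decreasing property to a half-plane in $\Cb$ whose boundary passes through $0$, and then verify the one-dimensional inequality $K_{\mathbb H}(a,b)\ge \tfrac12\bigl|\log|a/b|\bigr|$. The paper obtains the separating half-plane by first extending $H$ to a real hyperplane disjoint from $\Omega$ and then rotating $P$; you equivalently apply $P$ first and separate $0$ from the convex image in $\Cb$.

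The only genuine difference is in how the half-plane estimate is established. The paper uses the closed formula
\[
K_{\Hc}(w_1,w_2)=\tfrac12\arcosh\!\left(1+\frac{|w_1-w_2|^2}{2\,\Imaginary(w_1)\,\Imaginary(w_2)}\right)
\]
together with the elementary inequalities $|w_1-w_2|\ge\bigl||w_1|-|w_2|\bigr|$ and $\Imaginary(w_j)\le|w_j|$, which collapse the expression to $\tfrac12|\log|w_1/w_2||$. You instead pass via $\log$ to the strip, pull back the Poincar\'e metric, and bound curve lengths using $\cos(\Imaginary w)\le 1$. Your route is a bit more geometric and makes transparent \emph{why} the bound is exactly $\log|\cdot|$ (the real part of $\log$); the paper's is a shorter direct computation once the explicit distance formula is in hand. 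Either way the content is the same elementary hyperbolic estimate.
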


\begin{proof}
Since $\Omega$ is convex there exists a real hyperplane $H_{\Rb}$ such that $H \subset H_{\Rb}$ and $H_{\Rb} \cap \Omega = \emptyset$. By replacing $P$ with $e^{i\theta} P$ for some $\theta \in \Rb$ we can assume that $P(H_{\Rb}) = \Rb$ and 
\begin{align*}
P(\Omega) \subset \Hc:= \{ z \in \Cb : \Imaginary(z) > 0\}.
\end{align*} 
Then 
\begin{align*}
K_\Omega(z_1, z_2) 
&\geq K_{P(\Omega)}(P(z_1),P(z_2)) \geq K_{\Hc}(P(z_1),P(z_2)) \\
& = \frac{1}{2} \arcosh \left( 1 + \frac{\abs{P(z_1)-P(z_2)}^2}{2 \Imaginary(P(z_1))\Imaginary(P(z_2))} \right) \\
& \geq  \frac{1}{2} \arcosh \left( 1  + \frac{ (\abs{P(z_1)}-\abs{P(z_2)})^2}{2\abs{P(z_1)}\abs{P(z_2)}} \right) \\
& = \frac{1}{2} \arcosh \left( \frac{\abs{P(z_1)}}{\abs{P(z_2)}}+\frac{\abs{P(z_2)}}{\abs{P(z_1)}} \right) = \frac{1}{2}\abs{ \log \abs{\frac{P(z_1)}{P(z_2)}}}.
\end{align*}
\end{proof}

Since every point in the boundary of a convex domain is contained in a supporting hyperplane we have the following consequence of Lemma~\ref{lem:hyperplanes}.

\begin{lemma}\label{lem:convex_lower_bd_2}
Suppose $\Omega \subset \Cb^d$ is a convex domain and $x,y \in \Omega$ are distinct. If $L$ is the complex affine line containing $x, y$,  then 
\begin{align*}
\sup_{\xi \in L \setminus L \cap \Omega} \frac{1}{2}\abs{\log \left( \frac{\norm{x-\xi}}{\norm{y-\xi}} \right)} \leq K_{\Omega}(x,y).
 \end{align*}
 \end{lemma}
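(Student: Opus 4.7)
The plan is to derive this lemma directly from Lemma~\ref{lem:hyperplanes}: it suffices to exhibit a complex affine hyperplane $H$ through $\xi$ that is disjoint from $\Omega$, and then to check that a complex affine map $P$ vanishing on $H$ converts the ratio $\|x-\xi\|/\|y-\xi\|$ on the line $L$ into $|P(x)/P(y)|$.

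First I would produce the hyperplane. Since $\xi \in L \setminus (L \cap \Omega)$ and $\Omega$ is convex, $\xi$ lies in $\Cb^d \setminus \Omega$. By standard convex separation there exists a real affine hyperplane $H_\Rb \subset \Cb^d$ with $\xi \in H_\Rb$ and $H_\Rb \cap \Omega = \emptyset$: a supporting real hyperplane at $\xi$ if $\xi \in \partial\Omega$, and a separating real hyperplane otherwise. The set $H := H_\Rb \cap (i H_\Rb)$ is then a complex affine hyperplane, contains $\xi$, and satisfies $H \cap \Omega \subset H_\Rb \cap \Omega = \emptyset$.

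Next I would fix a complex affine map $P : \Cb^d \to \Cb$ with $P^{-1}(0) = H$. Lemma~\ref{lem:hyperplanes} immediately gives $K_\Omega(x,y) \geq \tfrac{1}{2} \bigl|\log |P(x)/P(y)|\bigr|$, so the remaining task is to compute $|P(z)|$ for $z$ on the line $L$. Writing $P = \ell + b$ with $\ell \colon \Cb^d \to \Cb$ complex linear and parameterizing $L$ as $\{\xi + \lambda w : \lambda \in \Cb\}$ for some fixed nonzero $w \in \Cb^d$, the identity $P(\xi) = 0$ yields $P(\xi + \lambda w) = \lambda\, \ell(w)$, so that $|P(z)| = \bigl(|\ell(w)|/\|w\|\bigr)\cdot \|z-\xi\|$ for every $z \in L$. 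Because $x \in \Omega$ and $H \cap \Omega = \emptyset$ we have $\ell(w) \neq 0$, hence this proportionality constant cancels and $|P(x)/P(y)| = \|x-\xi\|/\|y-\xi\|$, completing the proof.

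The only step that could pose an obstacle is the hyperplane construction, and even that is routine: the real separating hyperplane exists because $\Omega$ is convex and $\xi \notin \Omega$, and intersecting it with its multiplication by $i$ always yields a complex hyperplane still disjoint from $\Omega$, so the lemma follows at once from Lemma~\ref{lem:hyperplanes}.
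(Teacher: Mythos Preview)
Your proof is correct and follows essentially the same route as the paper, which simply states the lemma as an immediate consequence of Lemma~\ref{lem:hyperplanes} via a supporting hyperplane at $\xi$; you have merely filled in the details of that one-line deduction. One small notational remark: for an \emph{affine} hyperplane $H_{\Rb}$ the expression $H_{\Rb}\cap iH_{\Rb}$ should be read after translating $\xi$ to the origin (i.e., $H=\xi+(V\cap iV)$ where $H_{\Rb}=\xi+V$), but the intended complex hyperplane is clear and the argument goes through unchanged.
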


\subsection{Geodesics in complex hyperbolic space}

Let $\Bb_d \subset \Cb^d$ be the unit ball. Then it is well known that $(\Bb_d, K_{\Bb_d})$ is a standard model of complex hyperbolic $d$-space. In this subsection we describe some basic properties of geodesics in this metric space, but first a definition. 

\begin{definition}
A \emph{complex geodesic} in a domain $\Omega$ is a holomorphic map $\varphi: \Db \rightarrow \Omega$ which satisfies 
\begin{align*}
K_\Omega(\varphi(z), \varphi(w)) = K_{\Db}(z,w)
\end{align*}
for all $z, w\in \Db$. 
\end{definition}

For the unit ball, every real geodesic is contained in a unique complex geodesic.

\begin{proposition}\label{prop:cplx_geod} If $\gamma : \Rb_{\geq 0} \rightarrow \Bb_d$ is a geodesic ray, then there exists a complex geodesic $\varphi : \Db \rightarrow \Bb_d$ such that $\gamma(\Rb_{\geq 0}) \subset \varphi(\Db)$. Moreover, $\varphi$ is unique up to parametrization, that is: if $\varphi_0 : \Db \rightarrow \Bb_d$ is a complex geodesic with  $\gamma(\Rb_{\geq 0}) \subset \varphi_0(\Db)$ then $\varphi_0 = \varphi \circ \phi$ for some $\phi \in \Aut(\Db)$. 
\end{proposition}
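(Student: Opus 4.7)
The plan is to use the transitive action of $\Aut(\Bc_d)$ on the real unit tangent bundle to reduce to a canonical configuration in which $\gamma(0)=0$ and $\gamma^\prime(0)$ is a positive real multiple of $e_1$. In this normalization the natural candidate complex geodesic is $\iota : \Db \rightarrow \Bc_d$ given by $\iota(z) = z e_1$, whose image is the slice $\Bc_d \cap (\Cb \cdot e_1)$. The proposition then splits into two statements: that $\gamma(\Rb_{\geq 0}) \subset \iota(\Db)$ (existence), and that a complex geodesic $\wt{\varphi}: \Db \rightarrow \Bc_d$ with $\wt{\varphi}(0)=0$ and $\wt{\varphi}^\prime(0)=e_1$ must equal $\iota$ (rigidity).

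For existence, I would first check that $\iota$ really is a complex geodesic: the projection $\Bc_d \rightarrow \iota(\Db)$, $(z_1,\dots,z_d) \mapsto z_1 e_1$, is a holomorphic retraction, so applying the distance-decreasing property both to it and to the inclusion $\iota(\Db) \hookrightarrow \Bc_d$ yields $K_{\Bc_d}(ze_1, we_1) = K_{\Db}(z,w)$. Consequently $t \mapsto (\tanh t)\,e_1$ is a unit-speed geodesic ray in $\Bc_d$ issuing from $0$ with initial velocity $e_1$. Picking $\psi \in \Aut(\Bc_d)$ sending $\gamma(0)$ to $0$ and $\gamma^\prime(0)$ to a positive real multiple of $e_1$, and using that $(\Bc_d, K_{\Bc_d})$ is a model of complex hyperbolic space, whose real geodesics are smooth and uniquely determined by initial conditions, $\psi \circ \gamma$ must agree with $t \mapsto (\tanh t)\,e_1$ up to affine reparametrization. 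Hence $\varphi := \psi^{-1} \circ \iota$ is a complex geodesic whose image contains $\gamma(\Rb_{\geq 0})$.

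For uniqueness up to reparametrization, suppose $\varphi, \varphi_0 : \Db \rightarrow \Bc_d$ are complex geodesics with $\gamma(\Rb_{\geq 0}) \subset \varphi(\Db) \cap \varphi_0(\Db)$. Each $\varphi_i$ is an isometric embedding of $\Db$ and hence injective, and $\varphi_i(\Db)$ is a complex submanifold of $\Bc_d$ whose tangent at $\gamma(0)$ contains the real vector $\gamma^\prime(0)$ and is therefore exactly the complex line $\Cb \cdot \gamma^\prime(0)$. Using that $\Aut(\Db)$ acts simply transitively on pairs consisting of a point and a complex unit tangent at that point, I precompose $\varphi$ and $\varphi_0$ with appropriate automorphisms of $\Db$ so that both satisfy the same normalization $\varphi_i(0) = \gamma(0)$ and $\varphi_i^\prime(0)$ equals a prescribed unit tangent in $\Cb \cdot \gamma^\prime(0)$; applying the automorphism $\psi$ from the existence argument then reduces the problem to the rigidity step, which I expect to be the main obstacle: if $\wt{\varphi} : \Db \rightarrow \Bc_d$ is a complex geodesic with $\wt{\varphi}(0) = 0$ and $\wt{\varphi}^\prime(0) = e_1$, then $\wt{\varphi}(z) = z e_1$. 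To establish this, compose with the first-coordinate projection $\pi(z_1,\dots,z_d) = z_1$: the map $\pi \circ \wt{\varphi} : \Db \rightarrow \Db$ fixes the origin with derivative $1$ there, so the Schwarz lemma gives $\pi \circ \wt{\varphi} = \id_{\Db}$. Writing $\wt{\varphi}(z) = (z, g(z))$, the explicit formula $K_{\Bc_d}(w, 0) = \frac{1}{2}\log\frac{1+\norm{w}}{1-\norm{w}}$ combined with the isometric condition $K_{\Bc_d}(\wt{\varphi}(z), 0) = K_{\Db}(z, 0)$ forces $\norm{\wt{\varphi}(z)} = \abs{z}$, hence $g \equiv 0$, as required.
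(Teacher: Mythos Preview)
Your argument is correct. The paper, however, does not prove Proposition~\ref{prop:cplx_geod} at all: it is stated as a standard fact about complex hyperbolic space (``For the unit ball, every real geodesic is contained in a unique complex geodesic'') and then used without further justification. So there is no ``paper's own proof'' to compare against; you have supplied a self-contained argument where the paper relies on background knowledge.

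A couple of minor remarks on your write-up. First, once you normalize so that $\psi\circ\gamma$ is a unit-speed geodesic ray from $0$ with initial velocity a positive multiple of $e_1$, uniqueness of Riemannian geodesics gives $\psi\circ\gamma(t) = (\tanh t)\,e_1$ on the nose; the phrase ``up to affine reparametrization'' is unnecessary. Second, in the uniqueness step you implicitly use that a complex geodesic is an immersion (so that $\varphi_i(\Db)$ is a genuine complex submanifold and the tangent line argument applies); this follows immediately from the infinitesimal isometry condition $k_{\Bc_d}(\varphi_i(z);\varphi_i^\prime(z)\xi) = k_{\Db}(z;\xi)$, which forces $\varphi_i^\prime(z)\neq 0$. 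With these small clarifications the proof is complete.
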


In the proof of Proposition~\ref{prop:obstruction} we will use the following fact about the asymptotic behavior of geodesics in complex hyperbolic space. 

\begin{theorem}\label{thm:cplx_hyp} If $\gamma_1,\gamma_2:\Rb_{\geq 0} \rightarrow \Bb_d$ are geodesic rays such that 
\begin{align*}
\liminf_{s,t \rightarrow \infty} K_{\Bb_d}(\gamma_1(s), \gamma_2(t)) < +\infty, 
\end{align*}
then there exists $T \in \Rb$ such that 
\begin{align*}
\lim_{t \rightarrow \infty} K_{\Bb_d}(\gamma_1(t), \gamma_2(t+T)) =0.
\end{align*}
Moreover, if the images of $\gamma_1$ and  $\gamma_2$ are contained in the same complex geodesic, then 
\begin{align*}
\lim_{t \rightarrow \infty} \frac{1}{t} \log K_{\Bb_d}(\gamma_1(t), \gamma_2(t+T)) = -2
\end{align*}
otherwise 
\begin{align*}
\lim_{t \rightarrow \infty} \frac{1}{t} \log K_{\Bb_d}(\gamma_1(t), \gamma_2(t+T)) = -1.
\end{align*}
\end{theorem}

Although this result is well known, we will sketch the proof of Theorem~\ref{thm:cplx_hyp} in the appendix. 

\subsection{The proof of Proposition~\ref{prop:obstruction}} Before starting the proof we state the following observation:

\begin{observation}\label{obs:dumb} Suppose $\Cc \subset \Cb^d$ is an open convex domain. If $x_0 + \Rb_{\geq 0} \cdot v_0 \subset \Cc$ for some $x_0 \in \Cc$ and $v_0 \in \Cb^d$, then $x + \Rb_{\geq 0} \cdot v_0 \subset \Cc$ for every $x \in \Cc$.
\end{observation}

Now for the rest of the subsection, suppose $d \geq 2$ and $\Cc \subset \Cb^d$ is a convex domain with the following properties:
\begin{enumerate}
\item $\Cc \cap \Span_{\Cb} \left\{ e_2, \dots, e_d \right\} = \emptyset$, 
\item $\Cc \cap \Cb \cdot e_1 = \{ z e_1 : \Imaginary(z) > 0\}$, and
\item $\Cc$ is biholomorphic to the unit ball.
\end{enumerate}

By Observation~\ref{obs:dumb} and property (2) above, for every $v \in \Span_{\Cb}\{e_2, \dots, e_d\}$ there exists some $\alpha_v \in \Rb \cup\{\infty\}$ such that 
\begin{equation}\label{eq:slices}
\{ ze_1 + v  : \Imaginary(z) > \alpha_v \} = \Cc \cap \Big(\Cb \cdot e_1+v\Big).
\end{equation}
Since $\Cc \cap \Span_{\Cb} \left\{ e_2, \dots, e_d \right\} = \emptyset$ we have that $\alpha_v \in \Rb_{\geq 0} \cup \{\infty\}$.

Let $\Sc$ be the set of unit vectors in $\Span_{\Cb} \left\{ e_2, \dots, e_d \right\}$. Then fix some $\delta > 0$ such that 
\begin{align*}
ie_1 + 2\delta \Db \cdot v \subset \Cc
\end{align*}
for every $v \in  \Sc$. Let  $\gamma: \Rb_{\geq 0} \rightarrow \Cb^d$ be the curve given by
\begin{align*}
\gamma(t) = e^{2t} i e_1
\end{align*}
and for $v \in \Sc$ let  $\gamma_v : \Rb_{\geq 0} \rightarrow \Cb^d$ be the curve given by
\begin{align*}
\gamma_v(t) = \delta v + (\alpha_{\delta v} + e^{2t})i e_1
\end{align*}
By Lemma~\ref{lem:half_plane} these curves are geodesic rays in $(\Cc, K_{\Cc})$. \newline

\noindent \textbf{Claim:} For every $v \in \Sc$, 
\begin{align*}
\lim_{t \rightarrow \infty}  K_{\Cc}(\gamma(t), \gamma_v(t)) =0.
\end{align*}

\begin{proof}[Proof of Claim:]
For $t$ large let
\begin{align*}
s_{t,v} = t + \frac{1}{2} \log \left( 1 - \frac{\alpha_{\delta v}}{e^{2t}} \right).
\end{align*}
Then $\gamma_v(s_{t,v}) = \delta v +e^{2t}i e_1$ and 
\begin{align*}
K_{\Cc}(\gamma_v(t), \gamma_v(s_{t,v})) =\frac{1}{2} \abs{\log \left( 1 - \frac{\alpha_{\delta v}}{e^{2t}} \right)}.
\end{align*}
Since $ie_1 + 2\delta \Db \cdot v \subset \Cc$, the equality in~\eqref{eq:slices} implies that 
\begin{align*}
i r e_1 + 2\delta \Db \cdot v \subset \Cc
\end{align*}
for all $ r \geq 1$. Hence 
\begin{align*}
\limsup_{t \rightarrow \infty}  & K_{\Cc}(\gamma(t), \gamma_v(t)) \leq \limsup_{t \rightarrow \infty}  \Big( K_{\Cc}(\gamma(t), \gamma_v(s_{t,v}))+ K_{\Cc}(\gamma_v(s_{t,v}), \gamma_v(t)) \Big)\\
& = \limsup_{t \rightarrow \infty}  K_{\Cc}(\gamma(t), \gamma_v(s_{t,v})) \leq 
\limsup_{t \rightarrow \infty} K_{2 \delta \Db}(0,\delta) < \infty.
\end{align*}
Thus by Theorem~\ref{thm:cplx_hyp} there exists $T_v \in \Rb$ such that
\begin{align*}
\lim_{t \rightarrow \infty}  & K_{\Cc}(\gamma(t), \gamma_v(t+T_v))=0.
\end{align*}
We claim that $T_v=0$. Let $P:\Cb^d \rightarrow \Cb$ be the complex linear map given by $P(z_1, \dots, z_d)=z_1$. Then by Lemma~\ref{lem:hyperplanes}
\begin{align*}
0& =\lim_{t \rightarrow \infty} K_{\Cc}(\gamma(t), \gamma_v(t+T_v)) \geq  \lim_{t \rightarrow \infty} \frac{1}{2} \abs{\log \abs{\frac{P(\gamma(t))}{P(\gamma_v(t+T_v))}}} \\
& =  \lim_{t \rightarrow \infty} \frac{1}{2}\abs{\log \frac{ e^{2t}}{e^{2(t+T_v)}+\alpha_{\delta v}}} =\abs{T_v}
\end{align*}
and so $T_v=0$. 
\end{proof}

By Lemma~\ref{lem:half_plane}, for each $v \in \Sc$ the geodesics $\gamma$ and $\gamma_v$ are contained in different complex geodesics. So by Theorem~\ref{thm:cplx_hyp} for each $v \in \Sc$ we have
\begin{align*}
\lim_{t \rightarrow \infty}  \frac{1}{t} \log K_{\Cc}(\gamma(t), \gamma_v(t)) = -1.
\end{align*}
Moreover
\begin{align*}
\abs{ K_{\Cc}(\gamma(t), \gamma_v(t)) - K_{\Cc}(\gamma(t), \gamma_v(s_{t,v}))}&  \leq K_{\Cc}(\gamma_v(t), \gamma_v(s_{t,v}))= \frac{1}{2} \abs{\log \left( 1 - \frac{\alpha_{\delta v}}{e^{2t}} \right)} \\
 =  \frac{\alpha_{\delta v}}{2} e^{-2t} + {\rm O }\left(e^{-4t} \right).  &
\end{align*}
So we also have
\begin{align*}
\lim_{t \rightarrow \infty}  \frac{1}{t} \log K_{\Cc}(\gamma(t), \gamma_v(s_{t,v})) = -1.
\end{align*}

\noindent \textbf{Claim:} For every $v \in \Sc$, 
\begin{align*}
\limsup_{t \rightarrow \infty} \frac{1}{t} \log \delta_{\Cc}(e^{t}ie_1; v)  \leq 1/2
\end{align*}

\begin{proof}[Proof of Claim:]
Note that 
\begin{align*}
K_{\Cc}(\gamma(t), \gamma_v(s_{t,v})) \leq K_{\delta_{\Cc}(e^{2t}ie_1; v)\Db}(0, \delta) = K_{\Db}\left(0, \frac{1}{\delta_{\Cc}(e^{2t}ie_1; v)}\delta \right).
\end{align*}
Then since $K_{\Db}$ is locally Lipschitz on $\Db \times \Db$ and $\delta_{\Cc}(e^{2t}ie_1; v) \geq 2 \delta$, there exists some $C \geq 0$ such that 
\begin{align*}
K_{\Cc}(\gamma(t), \gamma_v(s_{t,v})) \leq C\frac{ \delta}{\delta_{\Cc}(e^{2t}ie_1; v)}.
\end{align*}
Hence 
\begin{align*}
-1 = \lim_{t \rightarrow \infty} &  \frac{1}{t} \log K_{\Cc}(\gamma(t), \gamma_v(s_{t,v})) \\
& \leq \liminf_{t \rightarrow \infty}  -\frac{1}{t} \log \delta_{\Cc}(e^{2t}ie_1; v) = - \limsup_{t \rightarrow \infty} \frac{1}{t} \log \delta_{\Cc}(e^{2t}ie_1; v) \\
& = -2  \limsup_{t \rightarrow \infty} \frac{1}{t} \log \delta_{\Cc}(e^{t}ie_1; v)
\end{align*}
\end{proof}

\noindent \textbf{Claim:} For every $v \in \Sc$, 
\begin{align*}
\liminf_{t \rightarrow \infty} \frac{1}{t} \log \delta_{\Cc}(e^{t}ie_1; v)  \geq 1/2.
\end{align*}

\begin{proof}[Proof of Claim:] Fix a sequence $t_n \rightarrow \infty$ such that 
\begin{align*}
\liminf_{t \rightarrow \infty} \frac{1}{t} \log \delta_{\Cc}(e^{t}ie_1; v)  = \lim_{n \rightarrow \infty} \frac{1}{2t_n} \log \delta_{\Cc}(e^{2t_n}ie_1; v).
\end{align*}
Then let $z_n \in \Cb$ be such that $\abs{z_n} = \delta_{\Cc}(e^{2t_n}ie_1; v)$ and $e^{2t_n}ie_1+z_n v \in \partial \Cc$. By passing to a subsequence we can suppose that
\begin{align*}
\frac{z_n}{\abs{z_n}} \rightarrow e^{i \theta}
\end{align*}
for some $\theta \in \Rb$.

Let $v_0 = -e^{i\theta}v$, then by Lemma~\ref{lem:convex_lower_bd_2} we have
\begin{align*}
    K_{\Cc}(\gamma(t_n), \gamma_{v_0}(s_{t_n,v_0})) & \geq \frac{1}{2} \abs{ \log \frac{ \norm{ \gamma(t_n) - (e^{2t_n}ie_1+z_n v)}}{\norm{ \gamma_{v_0}(s_{t_n,v_0})- (e^{2t_n}ie_1+z_n v)}} } =  \frac{1}{2} \abs{ \log \frac{ \abs{z_n} }{\abs{\delta e^{i\theta} +z_n }} }.
 \end{align*}
    Now for $n$ large 
 \begin{align*}
 \abs{\delta e^{i\theta} +z_n } > \abs{z_n} +\delta/2
 \end{align*}
 and so for $n$ large
 \begin{align*}
    K_{\Cc}(\gamma(t_n), \gamma_{v_0}(s_{t_n,v_0}))  \geq \frac{1}{2} \abs{ \log \frac{ \abs{z_n} }{\abs{\delta e^{i\theta} +z_n }} } = \frac{1}{2} \log \frac{\abs{\delta e^{i\theta} +z_n }}{ \abs{z_n} } \geq \frac{1}{2} \log \left( 1 + \frac{\delta}{2\abs{z_n}} \right).
    \end{align*}
    Since 
    \begin{align*}
  \lim_{n \rightarrow \infty}    K_{\Cc}(\gamma(t_n), \gamma_{v_0}(s_{t_n,v_0}))    =0,
  \end{align*}
  the above estimate implies that $\abs{z_n} \rightarrow \infty$, then using the fact that $\log : \Rb_{> 0} \rightarrow \Rb$ is locally bi-Lipschitz there exists some $C > 0$ such that 
\begin{align*}
 K_{\Cc}(\gamma(t_n), \gamma_{v_0}(s_{t_n,v_0}))  \geq \frac{C}{\abs{z_n}} = \frac{C}{ \delta_{\Cc}(e^{2t_n}ie_1; v)}.
\end{align*}
Hence
\begin{align*}
-1 = \lim_{t \rightarrow \infty} &  \frac{1}{t} \log K_{\Cc}(\gamma(t), \gamma_{v_0}(s_{t,v})) \\
& \geq \limsup_{n \rightarrow \infty}  \frac{1}{t_n} \log \frac{C}{ \delta_{\Cc}(e^{2t_n}ie_1; v)} = - \liminf_{n \rightarrow \infty} \frac{1}{t_n} \log \delta_{\Cc}(e^{2t_n}ie_1; v) \\
& = -2\liminf_{t \rightarrow \infty} \frac{1}{t} \log \delta_{\Cc}(e^{t}ie_1; v).
\end{align*}

\end{proof}

\section{The space of convex domains and the action of the affine group}\label{sec:rescaling}

Following work of Frankel~\cite{F1989b, F1991}, in this section we describe some facts about the space of convex domains and the action of the affine group on this space. 

\begin{definition} Let $\Xb_d$ be the set of convex domains in $\Cb^d$ which do not contain a complex affine line and let $\Xb_{d,0}$ be the set of pairs $(\Omega,x)$ where $\Omega \in \Xb_d$ and $x \in \Omega$. \end{definition}

\begin{remark} The motivation for only considering convex domains which do not contain complex affine lines comes from Theorem~\ref{thm:barth}.
\end{remark}

We now describe a natural topology on the sets $\Xb_d$ and $\Xb_{d,0}$. Given two compact sets $A,B \subset \Cb^d$ define the \emph{Hausdorff distance} between them to be
\begin{align*}
d_{H}(A,B) = \max\left\{ \max_{a \in A} \min_{b \in B} \norm{a-b}, \max_{b \in B} \min_{a \in A} \norm{b-a} \right\}.
\end{align*}
The Hausdorff distance is a complete metric on the set of compact subsets of $\Cb^d$. To consider general closed sets, we introduce the \emph{local Hausdorff semi-norms} between two closed sets $A,B \subset \Cb^d$ by defining 
\begin{align*}
d_{H}^{(R)}(A,B) = d_{H}\left(A \cap \overline{\Bb_d(0;R)},B\cap \overline{\Bb_d(0;R)}\right)
\end{align*}
for $R > 0$. Since an open convex set is determined by its closure, we can define a topology on $\Xb_{d}$ and $\Xb_{d,0}$ using these seminorms:
\begin{enumerate}
\item A sequence $\Cc_n \in \Xb_d$ converges to $\Cc \in \Xb_d$ if there exists some $R_0 \geq 0$ such that $d_{H}^{(R)}(\overline{\Cc}_n,\overline{\Cc}) \rightarrow 0$ for all $R \geq R_0$, 
 \item A sequence $(\Cc_n,x_n) \in \Xb_{d,0}$ converges to $(\Cc,x) \in \Xb_{d,0}$ if $\Cc_n$ converges to $\Cc$ in $\Xb_d$ and $x_n$ converges to $x$ in $\Cb^d$. 
 \end{enumerate}

Let $\Aff(\Cb^d)$ be the group of complex affine isomorphisms of $\Cb^d$. Then $\Aff(\Cb^d)$ acts on $\Xb_d$ and $\Xb_{d,0}$. Remarkably, the action of $\Aff(\Cb^d)$ on $\Xb_{d,0}$ is co-compact:

\begin{theorem}[Frankel \cite{F1991}]\label{thm:frankel_compactness}
The group $\Aff(\Cb^d)$ acts co-compactly on $\Xb_{d,0}$, that is there exists a compact set $K \subset \Xb_{d,0}$ such that $\Aff(\Cb^d) \cdot K = \Xb_{d,0}$. 
\end{theorem}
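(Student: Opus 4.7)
The plan is to exhibit an explicit compact set $\Kc \subseteq \Xb_{d,0}$ with $\Aff(\Cb^d)\cdot \Kc = \Xb_{d,0}$. I will construct $\Kc$ by normalizing each pointed domain using the Kobayashi indicatrix at the base point together with a complex version of John's ellipsoid.

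For $(\Omega, x) \in \Xb_{d,0}$, consider the Kobayashi indicatrix $I_\Omega(x) := \{v \in \Cb^d : k_\Omega(x; v) < 1\}$. It is bounded by Theorem~\ref{thm:barth} (since $\Omega$ contains no complex affine line, $k_\Omega$ is non-degenerate), balanced because one may precompose any competitor $f \in \Hol(\Db, \Omega)$ with a rotation of $\Db$, open, and convex by Lempert's theorem. By uniqueness, the John ellipsoid of $I_\Omega(x)$ inherits the $S^1$-symmetry and so is a complex ellipsoid (the unit ball of a positive Hermitian form). Pick $L \in \GL(d, \Cb)$ taking this ellipsoid to $\Bc_d$; then $\Bc_d \subseteq L(I_\Omega(x)) \subseteq \sqrt{d}\,\Bc_d$. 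Setting $A(z) := L(z - x) \in \Aff(\Cb^d)$, the affine equivariance of $k_\Omega$ gives $\Bc_d \subseteq I_{A(\Omega)}(0) \subseteq \sqrt{d}\,\Bc_d$. Using the standard convex-domain estimates
\begin{align*}
\frac{1}{2\delta_\Cc(0;v)} \leq k_\Cc(0;v) \leq \frac{1}{\delta_\Cc(0;v)}
\end{align*}
for unit $v$ (upper bound: the disk $\{tv : |t|<\delta_\Cc(0;v)\} \subseteq \Cc$ by convexity, so $\varphi(\zeta)=\delta_\Cc(0;v)\zeta v$ is a valid competitor; lower bound: the infinitesimal form of Lemma~\ref{lem:convex_lower_bd_2} with $\xi$ the nearest point of $\partial \Cc \cap \Cb v$), this pinching yields $\tfrac{1}{2} \leq \delta_{A(\Omega)}(0; v) \leq \sqrt{d}$ for each unit $v$, so $B_{1/2}(0) \subseteq A(\Omega)$ and every complex line through $0$ meets $\partial A(\Omega)$ within distance $\sqrt{d}$. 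I therefore set
\begin{align*}
\Kc := \{(\Cc, 0) \in \Xb_{d,0} : B_{1/2}(0) \subseteq \Cc,\ \delta_\Cc(0; v) \leq \sqrt{d} \text{ for every unit } v \in \Cb^d\},
\end{align*}
and by the above $\Aff(\Cb^d) \cdot \Kc = \Xb_{d,0}$.

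It remains to show $\Kc$ is compact. Given a sequence $(\Cc_n, 0) \in \Kc$, Blaschke selection applied in each ball $B_R(0)$ together with a diagonal argument produces a subsequence whose closures converge locally Hausdorff to a closed convex set $\overline{\Cc_\infty}$; let $\Cc_\infty := \inte \overline{\Cc_\infty}$. The interior ball condition passes to the limit at once: $B_{1/2}(0) \subseteq \Cc_\infty$. The boundary condition passes as well: given $w_n \in \partial \Cc_n \cap \Cb v$ with $\|w_n\| \leq \sqrt{d}$, a subsequential limit $w_\infty$ lies in $\Cb v \cap \overline{\Cc_\infty}$, and cannot lie in the open set $\Cc_\infty$ (else by local Hausdorff convergence $w_n$ would eventually be interior points of $\Cc_n$, contradicting $w_n \in \partial \Cc_n$), so $w_\infty \in \partial \Cc_\infty$ and $\delta_{\Cc_\infty}(0;v) \leq \sqrt{d}$.

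The main obstacle will be verifying $\Cc_\infty \in \Xb_{d,0}$, i.e., that $\Cc_\infty$ contains no complex affine line. My plan is to argue by contradiction: suppose $\Cc_\infty \supseteq p + \Cb v$ for some $p, v$. I will invoke the standard fact that an open convex subset of $\Cb^d$ containing a real affine line is invariant under translations in the direction of that line (proved by combining a small interior ball with convex combinations toward points far out along the line). Applying this to $p + \Rb v$ and $p + \Rb iv$ in turn, $\Cc_\infty$ is $\Cb v$-translation invariant, and hence $\Cb v = 0 + \Cb v \subseteq \Cc_\infty$, contradicting $\delta_{\Cc_\infty}(0; v) \leq \sqrt{d}$. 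Thus $(\Cc_\infty, 0) \in \Kc$, and $\Kc$ is compact.
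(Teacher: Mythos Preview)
Your argument is essentially correct and gives a valid proof of Frankel's co-compactness theorem. One small quibble: John's theorem for a centrally symmetric body in $\Rb^{2d}$ gives the ratio $\sqrt{2d}$, not $\sqrt{d}$; since the indicatrix is balanced the real John ellipsoid is indeed $S^1$-invariant (hence Hermitian), but the volume comparison is still the real one. This constant is irrelevant to the argument, so nothing breaks. Note also that the convexity of the Kobayashi indicatrix on a convex domain is a nontrivial input---it is Lempert's theorem (equivalently, $k_\Omega=c_\Omega$, and the Carath\'eodory unit ball is convex by definition). The remaining details you gloss over (that compact subsets of $\Cc_\infty$ are eventually contained in $\Cc_n$, and that an open convex set containing a real line is translation-invariant in that direction) are standard facts about convex bodies and are easily supplied.

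Your route, however, is genuinely different from what the paper does. The paper does not prove Theorem~\ref{thm:frankel_compactness} itself; it cites Frankel and then records a sharper statement (Theorem~\ref{thm:weak_fund_domain}, from~\cite{Z2016}) exhibiting the explicit compact set $\Kb_d$ of Definition~\ref{def:compact_set}. That set is cut out by flag-type incidence conditions (each $\Db e_i\subset\Omega$ and each affine plane $Z_i=e_i+\Span_{\Cb}\{e_{i+1},\dots,e_d\}$ disjoint from $\Omega$), and the normalization putting an arbitrary $(\Omega,x)$ into $\Kb_{d,0}$ is obtained by iteratively choosing nearest boundary points and supporting complex hyperplanes (cf.\ the proof of Proposition~\ref{prop:two_diml_slice}). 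That approach is more elementary---it needs neither Lempert nor John---and the specific structure of $\Kb_d$ is exploited later in the paper (e.g.\ Propositions~\ref{prop:two_diml_slice} and~\ref{prop:recale_2plusAlpha}). Your approach is more canonical and metric in flavor: the affine normalization is essentially unique once one agrees to round the Kobayashi indicatrix, whereas the paper's flag normalization involves arbitrary choices at each step. Both prove the theorem, but for the downstream applications here the paper's $\Kb_d$ is the more useful object.
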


Given some $\Cc$ and a sequence of points $x_n \in \Cc$ the above theorem says that we can find affine maps $A_n \in \Aff(\Cb^d)$ such that $\{A_n (\Cc,x_n) \}_{n \in \Nb}$ is relatively compact in $\Xb_{d,0}$. Hence there exists a subsequence $n_k \rightarrow \infty$ such that $A_{n_k} (\Cc,x_{n_k})$ converges in $\Xb_{d,0}$. Many of the arguments that follow rely on analyzing the geometry of the domains obtained by this ``rescaling'' which leads to the next definition.

\begin{definition} Given some $\Cc \in \Xb_d$ let ${\rm BlowUp}(\Cc)\subset \Xb_d$ denote the set of $\Cc_\infty$ in $\Xb_d$ where there exist a sequence $x_n \in \Cc$, a point $x_\infty \in \Cc_\infty$, and affine maps $A_n \in \Aff(\Cb^d)$ such that 
\begin{enumerate}
\item $x_n \rightarrow \infty$ in $\Cc$ (that is, for every compact subset $K \subset \Cc$ there exists some $N > 0$ such that $x_n \notin K$ for all $n \geq N$),
\item $A_n(\Cc, x_n)$ converges to $(\Cc_\infty, x_\infty)$. 
\end{enumerate}
\end{definition}

For some domains, the set ${\rm BlowUp}(\Cc)$ is very special.

\begin{proposition}\label{prop:blow_up} Suppose that $\Cc \subset \Cb^d$ is a convex domain which is biholomorphic to a bounded strongly pseudoconvex domain with $C^2$ boundary. Then every $\Cc_\infty \in {\rm BlowUp}(\Cc)$ is biholomorphic to the unit ball in $\Cb^d$. 
\end{proposition}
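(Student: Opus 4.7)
The plan is to apply Pinchuk's scaling method on the strongly pseudoconvex side to produce a second rescaling of $\Omega$ whose pointed Hausdorff limit is the Siegel half-space $\Pc_d$, and then transport this rescaling through the biholomorphism $\Omega \to \Cc$ to construct a biholomorphism $\Pc_d \to \Cc_\infty$. Since $\Pc_d$ is biholomorphic to $\Bc_d$, this will prove the proposition.

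First I would set up the two rescalings. Fix a biholomorphism $f : \Omega \to \Cc$. Given $\Cc_\infty \in {\rm BlowUp}(\Cc)$, witnessed by a sequence $x_n \to \infty$ in $\Cc$ and affine maps $A_n$ with $A_n(\Cc, x_n) \to (\Cc_\infty, x_\infty)$, set $y_n := f^{-1}(x_n)$. Since $f$ is a biholomorphism and $\Omega$ is bounded, the sequence $y_n$ leaves every compact subset of $\Omega$, and after passing to a subsequence $y_n \to y_\infty \in \partial \Omega$, which is a strongly pseudoconvex boundary point. Pinchuk's scaling construction at $y_\infty$ then produces affine maps $T_n \in \Aff(\Cb^d)$ (the composition of a translation, a unitary, and a non-isotropic dilation of the form $(z_1, z_2, \dots, z_d) \mapsto (\delta_n^{-1} z_1, \delta_n^{-1/2} z_2, \dots, \delta_n^{-1/2} z_d)$, where $\delta_n := \delta_\Omega(y_n)$) together with a point $p_0 \in \Pc_d$ such that $T_n(\Omega, y_n) \to (\Pc_d, p_0)$ in $\Xb_{d,0}$.

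Next I would form the biholomorphisms $F_n := A_n \circ f \circ T_n^{-1} : T_n(\Omega) \to A_n(\Cc)$ together with their inverses $G_n := F_n^{-1}$, and extract subsequential limits. By Theorem~\ref{thm:barth}, both $\Pc_d$ and $\Cc_\infty$ lie in $\Xb_d$, so their Kobayashi pseudo-distances are proper Cauchy complete metrics. Using the distance-decreasing property, the convergences $T_n(\Omega, y_n) \to (\Pc_d, p_0)$ and $A_n(\Cc, x_n) \to (\Cc_\infty, x_\infty)$, and the fact that $F_n(T_n(y_n)) = A_n(x_n) \to x_\infty$, a standard normal-families argument yields subsequential locally uniform limits $F : \Pc_d \to \overline{\Cc_\infty}$ and $G : \Cc_\infty \to \overline{\Pc_d}$ with $F(p_0) = x_\infty$ and $G(x_\infty) = p_0$. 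A Hurwitz-type argument promotes $F$ and $G$ to holomorphic maps into the interiors, and passing the identities $F_n \circ G_n = \id$ and $G_n \circ F_n = \id$ to the limit yields $F \circ G = \id_{\Cc_\infty}$ and $G \circ F = \id_{\Pc_d}$, so that $F : \Pc_d \to \Cc_\infty$ is a biholomorphism.

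The main obstacle is the normal-families step: one must rule out the possibility that a subsequential limit of $F_n$ sends $\Pc_d$ entirely into $\partial \Cc_\infty$, and correspondingly for $G_n$. The key point is that the base-point values $F_n(T_n(y_n)) = A_n(x_n)$ converge to the interior point $x_\infty \in \Cc_\infty$, and the Kobayashi distance on $\Cc_\infty$ is proper (using $\Cc_\infty \in \Xb_d$), so the distance-decreasing property confines the images of compact subsets of $\Pc_d$ to compact subsets of $\Cc_\infty$. This is the standard Wong--Rosay-type machinery for domains, adapted to the convex rescaling setting as in Frankel~\cite{F1991}.
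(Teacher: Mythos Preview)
Your approach is correct and is precisely the Frankel--Pinchuk rescaling argument that the paper explicitly mentions as one route to the result; however, the paper chooses a different and considerably shorter proof via the squeezing function. The paper argues as follows: since $\Cc$ is biholomorphic to a bounded strongly pseudoconvex domain with $C^2$ boundary, Theorem~\ref{thm:sq_on_str} gives $s_{\Cc}(x_n) \to 1$ whenever $x_n \to \infty$ in $\Cc$; the squeezing function $(\Omega,x) \mapsto s_\Omega(x)$ is affine-invariant and upper semicontinuous on $\Xb_{d,0}$, so $s_{\Cc_\infty}(x_\infty) \geq \limsup_n s_{A_n\Cc}(A_n x_n) = 1$; hence $s_{\Cc_\infty}(x_\infty)=1$, and by a theorem of Deng--Guan--Zhang this forces $\Cc_\infty$ to be biholomorphic to $\Bc_d$. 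The entire Pinchuk scaling and normal-families machinery is thus hidden inside the cited results on the squeezing function.

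What each approach buys: the paper's proof is three lines once the squeezing-function inputs are in place, and it avoids having to verify by hand the convergence of Kobayashi balls on the varying domains $A_n(\Cc)$ and $T_n(\Omega)$ needed for your normal-families step. Your approach is more self-contained and transparent about \emph{why} the limit is the ball (one literally writes down the biholomorphism as a limit of conjugated maps), and it does not require importing the squeezing-function theory. One small point to tidy in your write-up: $\Omega$ is not assumed convex, so $(T_n\Omega, T_n y_n)$ does not live in $\Xb_{d,0}$; you should phrase the Pinchuk convergence $T_n\Omega \to \Pc_d$ directly in terms of local Hausdorff convergence of domains rather than convergence in $\Xb_{d,0}$.
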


This is a consequence of the Frankel-Pinchuk rescaling method, but we will provide a proof using the squeezing function. 

\begin{proof} Suppose that  $\Cc_\infty \in {\rm BlowUp}(\Cc)$. Then fix a sequence $x_n \in \Cc$ such that $x_n \rightarrow \infty$ in $\Cc$, a point $x_\infty \in \Cc_\infty$, and affine maps $A_n \in \Aff(\Cb^d)$ such that $A_n(\Cc, x_n)$ converges to $(\Cc_\infty, x_\infty)$.

By results of Diederich, Forn{\ae}ss, and Wold~\cite[Theorem 1.1]{DFW2014} and Deng, Guan, and Zhang~\cite[Theorem 1.1]{DGZ2016} (see Theorem~\ref{thm:sq_on_str} above) 
\begin{align*}
\lim_{n \rightarrow \infty} s_{\Cc}(x_n) = 1.
\end{align*}
Now the function $(\Omega, x) \in \Xb_{d,0} \rightarrow s_{\Omega}(x)$ is an upper semicontinuous function on $\Xb_{d,0}$ (see Proposition 7.1 in~\cite{Z2016}). So
\begin{align*}
s_{\Cc_\infty}(x_\infty) \geq \lim_{n \rightarrow \infty} s_{A_n\Cc}(A_nx_n) =\lim_{n \rightarrow \infty} s_{\Cc}(x_n) = 1.
\end{align*}
Then $s_{\Cc_\infty}(x_\infty)=1$ and so $\Cc_\infty$ is biholomorphic to the unit ball in $\Cb^d$ by Theorem 2.1 in~\cite{DGZ2012}.
\end{proof}

We next define a particular compact subset of $\Xb_{d,0}$ whose $\Aff(\Cb^d)$-translates cover $\Xb_{d,0}$. Recall that 
\begin{align*}
\Db_1 = \{ z \in \Cb : \abs{ {\rm Im}(z)} + \abs{ { \rm Re}(z)} < 1\}.
\end{align*}
For $1 \leq i \leq d$ consider the complex $(d-i)$-dimensional affine plane
\begin{align*}
Z_i =  e_i + \Span_{\Cb}\{e_{i+1}, \dots, e_d\}.
\end{align*}

\begin{definition}\label{def:compact_set}
Let $\Kb_d \subset \Xb_{d}$ be the set of convex domains $\Omega$ such that:
\begin{enumerate} 
\item $\Db_1 e_i \subset \Omega$ for each $1 \leq i \leq d$, 
\item $Z_i \cap \Omega=\emptyset$ for each $1 \leq i \leq d$. 
\end{enumerate}
Also let $\Kb_{d,0} = \{ (\Omega, 0) : \Omega \in \Kb\}$.
\end{definition} 

\begin{theorem}\label{thm:weak_fund_domain}\cite[Theorem 2.5]{Z2016} With the notation above: $\Kb_{d,0}$ is a compact subset of $\Xb_{d,0}$ and $\Aff(\Cb^d) \cdot \Kb_{d,0} = \Xb_{d,0}$.
\end{theorem}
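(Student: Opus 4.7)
The plan is to verify the two assertions separately: (1) $\Kb_{d,0}$ is sequentially compact in $\Xb_{d,0}$, and (2) every pair $(\Omega,p)\in\Xb_{d,0}$ is $\Aff(\Cb^d)$-equivalent to an element of $\Kb_{d,0}$ via an explicit normalization procedure.

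For (1), I would take a sequence $(\Omega_n,0)\in\Kb_{d,0}$ and apply the Blaschke selection theorem to $\overline{\Omega_n}\cap\overline{B_R(0)}$, together with a diagonal argument in $R$, to extract a subsequence whose closures converge locally in Hausdorff distance to a closed convex set $C\subset\Cb^d$. Setting $\Omega_\infty:=\inte(C)$, I must verify that $(\Omega_\infty,0)\in\Kb_{d,0}$ and that the convergence is in the topology of $\Xb_{d,0}$. The inclusions $\overline\Db e_i\subset C$ pass to the limit, and a short convex-hull argument places $\Db e_i$ inside $\Omega_\infty$ with $0\in\Omega_\infty$. For $Z_i\cap\Omega_\infty=\emptyset$, I would apply Hahn--Banach to the disjoint convex pair $(\Omega_n,Z_i)$ to extract a real-affine functional $L_{i,n}(z)=\Real\langle z-e_i,v_{i,n}\rangle$ that vanishes on $Z_i$ and is negative on $\Omega_n$; since the linear part $Z_i-e_i=\Span_\Cb\{e_{i+1},\dots,e_d\}$ is $\Cb$-invariant, $v_{i,n}$ lies in the $\Cb$-orthogonal complement $\Span_\Cb\{e_1,\dots,e_i\}$. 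Normalizing $\norm{v_{i,n}}=1$ and passing to a subsequence with $v_{i,n}\to v_{i,\infty}$, I obtain $\Omega_\infty\subset\{L_{i,\infty}\le 0\}$; openness of $\Omega_\infty$ inside a closed half-space places it in the open half-space, giving $\Omega_\infty\cap Z_i=\emptyset$.

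The main obstacle is verifying that $\Omega_\infty$ contains no complex affine line; this is the decisive step and requires quantitative input from the disk conditions. Writing $v_{i,n}=\sum_{j=1}^i c_{j,n}e_j$, the condition $L_{i,n}\le 0$ on $\overline\Db e_i$ forces $\max_{|t|\le 1}\Real((t-1)\overline{c_{i,n}})\le 0$, which pins the argument of $c_{i,n}$ to zero and puts $c_{i,n}\in(0,1]$; the analogous conditions on $\overline\Db e_j$ for $j<i$ give $|c_{j,n}|\le c_{i,n}$, whence $1=\sum_j|c_{j,n}|^2\le i\,c_{i,n}^2$ and therefore $c_{i,n}\ge 1/\sqrt i$. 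This lower bound survives the limit, so $c_{i,\infty}>0$. Now assume a complex affine line with direction $v\neq 0$ lies in $\Omega_\infty$. Since $L_{i,\infty}$ is real-affine and bounded above on this line, which is a copy of $\Cb$, it is constant there, equivalent to $\langle v,v_{i,\infty}\rangle_\Cb=0$. Inducting on $i$: the $i=1$ case gives $v_1 c_{1,\infty}=0$ hence $v_1=0$; assuming $v_1=\cdots=v_{i-1}=0$, the inner product collapses to $v_i c_{i,\infty}=0$ and forces $v_i=0$. Thus $v=0$, contradicting that it is a direction. So $\Omega_\infty\in\Xb_d$, and since $C=\overline{\inte(C)}=\overline{\Omega_\infty}$ for any closed convex set with nonempty interior, the convergence takes place in the topology of $\Xb_{d,0}$.

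For (2), given $(\Omega,p)\in\Xb_{d,0}$, I would first translate so $p=0$ and then construct a complex basis $\{w_1,\dots,w_d\}$ of $\Cb^d$ satisfying $\Db w_j\subset\Omega$ and $(w_j+\Span_\Cb\{w_{j+1},\dots,w_d\})\cap\Omega=\emptyset$ for every $j$; the complex linear map sending $w_j\mapsto e_j$ then places $\Omega$ into $\Kb_d$. The basis is built by a nearest-point construction on a descending flag of complex subspaces. Set $V_1=\Cb^d$; given $V_j$, let $w_j$ be a Euclidean-nearest point of $\partial_{V_j}(\Omega\cap V_j)$ to $0$, which exists and is nonzero because $\Omega\cap V_j$ is a proper open convex subset of $V_j$ containing $0$ (proper because $\Omega$ contains no complex affine line, and in particular not the subspace $V_j$). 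Define $V_{j+1}:=\{v\in V_j:\langle v,w_j\rangle_\Cb=0\}$; then $w_j\notin V_{j+1}$, so inductively $\{w_j,\dots,w_d\}$ spans $V_j$ and the full set $\{w_1,\dots,w_d\}$ is a basis of $\Cb^d$. The ball $B^{V_j}_{|w_j|}(0)$ is contained in $\Omega\cap V_j$ (it is the component of $0$ in $V_j\setminus\partial_{V_j}(\Omega\cap V_j)$), giving $\Db w_j\subset\Omega$; and the real supporting hyperplane $w_j+w_j^{\perp_\Rb}$ to $\Omega\cap V_j$ at $w_j$ contains $w_j+V_{j+1}$, since $\Cb$-orthogonality implies $\Rb$-orthogonality, so $w_j+V_{j+1}\subset V_j$ is disjoint from $\Omega\cap V_j$ and hence from $\Omega$.
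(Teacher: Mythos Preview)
The paper does not give its own proof of this theorem; it is quoted from \cite[Theorem~2.5]{Z2016} and stated without argument. So there is no in-paper proof to compare against directly.

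Your argument is correct. Part~(2) is essentially the same nearest-point-on-a-descending-flag construction that the paper itself uses in the proof of Proposition~\ref{prop:two_diml_slice}, so your normalization procedure is certainly in the spirit of the paper. Part~(1) is a clean self-contained argument: the key point---that the limit domain contains no complex affine line---is handled nicely by extracting separating functionals $L_{i,n}(z)=\Real\langle z-e_i,v_{i,n}\rangle$, using the $\Cb$-invariance of $Z_i-e_i$ to force $v_{i,n}\in\Span_\Cb\{e_1,\dots,e_i\}$, and then using the disk conditions $\Db e_j\subset\Omega_n$ to get the uniform lower bound $c_{i,n}\ge 1/\sqrt{i}$ on the ``diagonal'' coefficient; this is exactly what is needed to run the triangular induction killing the coordinates of a putative direction $v$ one at a time. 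One small point you left implicit: the separating functional produced by Hahn--Banach is a priori only $\ge c$ on $Z_i$, but since $Z_i$ is an affine subspace this forces $L$ to be constant there, and one can then take that constant as $c$; you should state this. Likewise, the assertion $\Db e_i\subset\Omega_\infty=\inte(C)$ (rather than just $\overline{\Db}e_i\subset C$) deserves one sentence: it follows because the convex hull of $\overline{\Db}e_1,\dots,\overline{\Db}e_d$ already contains the open $\ell^1$-ball $\{w:\sum_j|w_j|<1\}$, so every point of $\Db e_i$ is interior to $C$.
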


\begin{remark} In~\cite{Z2016} the set $\Kb_d \subset \Xb_d$ was slightly different: in particular one had the requirement that 
\begin{align*}
\Db e_i \subset \Omega \text{ for each } 1 \leq i \leq d
\end{align*}
instead of 
\begin{align*}
\Db_1 e_i \subset \Omega \text{ for each } 1 \leq i \leq d.
\end{align*}
However, the proof is identical.  
\end{remark}

We end this section with a technical result which will allow us to reduce calculations to the two dimensional case. 

\begin{proposition}\label{prop:two_diml_slice}
Suppose $\Cc \in \Xb_{d}$ is a convex domain such that:
\begin{enumerate}
\item $\Cc \cap \left(e_1 + \Span_{\Cb}\{e_2, \dots, e_d\}\right) = \emptyset$ and 
\item $\Cc \cap \Span_{\Cb} \{ e_1, e_2\} \in \Kb_2$,
\end{enumerate}
then there exists $A \in \GL_d(\Cb)$ such that $A|_{\Span_{\Cb}\{e_1, e_2\}} = \Id_{\Span_{\Cb}\{e_1, e_2\}}$ and $A\Cc \in \Kb_d$. 
\end{proposition}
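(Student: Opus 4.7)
The plan is to construct $A$ as a composition $A = \hat M' \circ \phi$ of two elements of the subgroup
$G_0 := \{B \in \Aff(\Cb^d) : B|_{\Span_{\Cb}\{e_1, e_2\}} = \Id\}$.
Writing a point of $\Cb^d$ as $(z_1, z_2, v) \in \Cb \times \Cb \times \Cb^{d-2}$, a general element of $G_0$ has the form $(z_1, z_2, v) \mapsto (z_1 + \alpha(v), z_2 + \beta(v), M'v)$ for linear functionals $\alpha, \beta : \Cb^{d-2} \to \Cb$ and $M' \in \GL(d-2, \Cb)$. The factor $\phi$ will have $\alpha = 0$, $M' = \Id$, and be chosen to force $Z_2 \cap A\Cc = \emptyset$; the factor $\hat M'$ will have $\alpha = \beta = 0$ and come from Theorem~\ref{thm:weak_fund_domain} applied in dimension $d-2$. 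Throughout, taking $\alpha = 0$ makes the first coordinate $w_1$ of any image equal to $w_1$, so $Z_1 \cap A\Cc = \emptyset$ reduces to hypothesis~(1); and the preservation of the slice $\tilde\Cc := \Cc \cap \Span_{\Cb}\{e_1, e_2\}$ makes the $\Kb_d$-conditions involving only $e_1, e_2$ automatic from hypothesis~(2).

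The heart of the argument is the construction of $\phi(z_1, z_2, v) := (z_1, z_2 + \beta(v), v)$ so that $Z_2 \cap \phi(\Cc) = \emptyset$. Identifying $\Cc^{(0)} := \Cc \cap \{z_1 = 0\}$ with a convex open subset of $\Cb^{d-1}$ with coordinates $(w_2, v)$, a direct unwinding of the definitions shows this is equivalent to the complex hyperplane $\{w_2 + \beta(v) = 1\}$ missing $\Cc^{(0)}$. The point $p_0 := (1, 0) \in \Cb^{d-1}$ does not lie in $\Cc^{(0)}$, since the corresponding $e_2 \in \Cb^d$ does not lie in $\tilde\Cc \in \Kb_2$; so $K := \Cc^{(0)} - p_0$ is a convex open set not containing $0$. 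Applying Hahn-Banach produces a real linear functional $\ell_{\Rb}$ with $\ell_{\Rb}(K) > 0$, and its complexification $\ell_{\Cb}(z) := \ell_{\Rb}(z) - i\ell_{\Rb}(iz)$ is complex linear with $\Real \ell_{\Cb} = \ell_{\Rb}$, so $\ell_{\Cb}(K) \subset \{\Real w > 0\}$ and in particular $0 \notin \ell_{\Cb}(K)$. Writing $\ell_{\Cb}(w_2, v) = c_2 w_2 + \gamma(v)$, the containment $0 \in \Cc^{(0)}$ (which holds since $\Db e_1 \subset \tilde\Cc$) gives $-p_0 \in K$ and hence $\ell_{\Cb}(-p_0) = -c_2 \neq 0$. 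Normalizing by $c_2$ yields $\beta := \gamma/c_2$ with $(1/c_2)\ell_{\Cb}(w_2, v) = w_2 + \beta(v)$, a functional never taking the value $1$ on $\Cc^{(0)}$.

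With this $\beta$ fixed, set $\Cc_1 := \phi(\Cc) \in \Xb_d$ and $\Cc_1'' := \Cc_1 \cap \Span_{\Cb}\{e_3, \ldots, e_d\}$. As a subset of $\Cc_1$, the set $\Cc_1''$ contains no complex affine line, and it contains $0$, so $\Cc_1'' \in \Xb_{d-2}$. Theorem~\ref{thm:weak_fund_domain} applied in dimension $d-2$ provides a linear $M' \in \GL(d-2, \Cb)$ with $M'\Cc_1'' \in \Kb_{d-2}$; extend it to $\hat M'(z_1, z_2, v) := (z_1, z_2, M'v) \in G_0$. Then $A := \hat M' \circ \phi \in G_0$ is the desired map: the $\Span_{\Cb}\{e_1, e_2\}$-slice of $A\Cc$ is $\tilde\Cc \in \Kb_2$ (giving $\Db e_1, \Db e_2 \subset A\Cc$); the $\Span_{\Cb}\{e_3, \ldots, e_d\}$-slice is $M'\Cc_1'' \in \Kb_{d-2}$ (giving $\Db e_j \subset A\Cc$ for $j \geq 3$ and $Z_i \cap A\Cc = \emptyset$ for $i \geq 3$, since each such $Z_i$ already lies in $\Span_{\Cb}\{e_3, \ldots, e_d\}$); and $\hat M'$ stabilizes $Z_1$ and $Z_2$ set-wise (both are cut out by conditions on $z_1, z_2$ alone), so $Z_1 \cap A\Cc = \emptyset$ and $Z_2 \cap A\Cc = \emptyset$ reduce to hypothesis~(1) and the choice of $\beta$.

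The main obstacle is the existence of $\beta$ in the middle paragraph. Bare Hahn-Banach yields only a real separating hyperplane, whereas what is needed is a complex hyperplane of the very specific graph-like form $\{w_2 + \beta(v) = 1\}$ through $p_0$. The resolution has two ingredients: the classical complexification trick $\ell_{\Cb} = \ell_{\Rb} - i \ell_{\Rb} \circ J$ to upgrade the real functional to a complex one, and the observation that the $w_2$-coefficient $c_2$ cannot vanish—which in turn uses only that $-p_0 \in K$, equivalently $0 \in \Cc^{(0)}$, coming from $\Db e_1 \subset \tilde\Cc$ in hypothesis~(2).
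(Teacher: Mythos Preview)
Your proof is correct but proceeds differently from the paper's. The paper constructs a single linear map $A$ directly: it builds a flag by choosing boundary points $\xi_1=e_1$, $\xi_2=e_2$, and then inductively $\xi_k$ as the point of $H_{k-1}\cap\partial\Cc$ closest to $0$, together with nested complex supporting subspaces $H_k \subset H_{k-1}$ with $(\xi_k+H_k)\cap\Cc=\emptyset$; the map $A$ is then the linear map sending $\xi_i\mapsto e_i$. Your approach instead factors $A=\hat M'\circ\phi$: first a shear $\phi$ in the $e_2$-direction, found via Hahn--Banach plus complexification, to arrange $Z_2\cap\phi(\Cc)=\emptyset$; then a block $\hat M'$ acting only on $\Span_{\Cb}\{e_3,\dots,e_d\}$, obtained by invoking Theorem~\ref{thm:weak_fund_domain} as a black box in dimension $d-2$. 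The supporting-hyperplane step at $e_2$ is essentially the same in both arguments (the paper's choice of $H_2$ is your Hahn--Banach functional in different clothing); the real divergence is that the paper repeats this idea inductively for $k=3,\dots,d$, while you package all of those steps into one appeal to Theorem~\ref{thm:weak_fund_domain}. Your route is more modular and makes the reduction to lower dimension transparent; the paper's route is self-contained and produces the map explicitly without invoking the co-compactness theorem. One small remark: your claim that Theorem~\ref{thm:weak_fund_domain} yields a \emph{linear} $M'$ is justified, since $\Kb_{d-2,0}$ consists of pairs $(\Omega,0)$ and your base point is $0$, so the affine map must fix the origin.
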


\begin{proof} We will select points $\xi_1, \dots, \xi_d \in \partial \Cc$ and subspaces $H_1, \dots, H_d \subset \Cb^d$ as follows. First let $\xi_1 = e_1$ and $H_1 = \Span_{\Cb}\{e_2, \dots, e_d\}$. Then let $\xi_2 = e_2$ and let $H_2$ be a $(d-2)$-dimensional complex subspace such that $(e_2 + H_2) \cap \Cc = \emptyset$ and 
\begin{align*}
H_2 \subset H_1 =  \Span_{\Cb}\{e_2, \dots, e_d\}.
\end{align*}
 Since $\Span_{\Cb}\{e_2, \dots, e_d\} \cap \Cc$ is convex and $e_2 \in \partial \Cc$, such a subspace exists. Now supposing that $\xi_1, \dots, \xi_{k-1}$ and $H_1, \dots, H_{k-1}$ have already been selected, we pick $\xi_k$ and $H_k$ as follows: let $\xi_k$ be a point in $H_{k-1} \cap \partial \Cc$ closest to $0$ and let $H_k$ be a $(d-k)$-dimensional complex subspace such that $H_k \subset H_{k-1}$ and $(\xi_k + H_k) \cap \Cc = \emptyset$. Since $H_{k-1} \cap \Cc$ is convex and $\xi_k \in \partial (H_{k-1} \cap \Cc)$, such a subspace exists.

Notice that
\begin{enumerate}
\item $\Cb \cdot \xi_k + H_k = H_{k-1}$ for $k \geq 2$, 
\item $H_{k} = \Span_{\Cb}\{ \xi_{k+1}, \dots, \xi_d\}$ for $k \geq 1$, and
\item $\Span_{\Cb}\{ \xi_1,  \dots, \xi_{d}\} = \Cb^d$.
\end{enumerate}

Now let $A \in \GL_d(\Cb)$ be the complex linear map with $A(\xi_i)=e_i$ for $1 \leq i \leq d$. Since $\xi_1, \dots, \xi_d$ is a basis of $\Cb^d$, the linear map $A$ is well defined. Since $\xi_1=e_1$ and $\xi_2 = e_2$ we see that $A|_{\Span_{\Cb}\{e_1, e_2\}} = \Id_{\Span_{\Cb}\{e_1, e_2\}}$.

We now claim that $A\Cc \in \Kb_d$. Since $A\Cc \cap \Span_{\Cb}\{e_1,e_2\} \in \Kb_2$ we have 
\begin{align*}
\Db_1 \cdot e_i \subset A\Cc \text{ for } i=1,2
\end{align*}
and by construction  
\begin{align*}
\Db \cdot e_i \subset A\Cc \text{ for } i=3,\dots, d.
\end{align*}
So 
\begin{align*}
\Db_1 \cdot e_i \subset A\Cc \text{ for } i=1,\dots, d.
\end{align*}
Since $A(\xi_k)=e_k$ and $H_{k} = \Span_{\Cb}\{ \xi_{k+1}, \dots, \xi_d\}$ we have
\begin{align*}
A\Cc \cap Z_k & = A \left( \Cc \cap A^{-1}Z_k \right) = A \left( \Cc \cap ( \xi_k + \Span_{\Cb}\{ \xi_{k+1}, \dots, \xi_d\})\right) \\
& = A\left( \Cc \cap (\xi_k+H_k)\right) =  \emptyset.
\end{align*}
So $A\Cc \in \Kb_d$.
\end{proof}

\section{The proof of Theorem~\ref{thm:C1}}\label{sec:C1}

In this section we will prove Theorem~\ref{thm:C1} which we begin by recalling.

\begin{theorem}
Suppose $\Omega \subset \Cb^d$ is a bounded strongly pseudoconvex domain with $C^2$ boundary and $\Cc \subset \Cb^d$ is a convex domain biholomorphic to $\Omega$. If $\Cc$ has $C^{1}$ boundary, then for every $\epsilon >0$ and $R> 0$ there exists a $C=C(\epsilon, R) \geq 1$ such that 
\begin{align*}
\delta_{\Cc}(z;v) \leq C \delta_{\Cc}(z)^{1/(2+\epsilon)}
\end{align*}
for all $z \in \Cc$ with $\norm{z} \leq R$ and all nonzero $v \in \Cb^d$.
\end{theorem}

For the rest of the section, fix a convex domain $\Cc \subset \Cb^d$ satisfying the conditions of the theorem. Then fix some $\epsilon >0$ and $R> 0$. 

For $z \in \Cc$ let $P_z$ be the set of points in $\partial \Cc$ which are closest to $z$. Then pick $R^\prime \geq R$ such that
\begin{align*}
P_z \subset \Bb_d(0;R^\prime)
\end{align*}
for all $z \in \overline{\Bb_d(0;R)} \cap \Cc$. Next let $\Kc = \overline{\Bb_d(0;R^\prime)} \cap \partial \Cc$. For $\xi \in \partial \Cc$ let $\vect{n}(\xi)$ be the inward pointing unit normal vector of $\Cc$ at $\xi$. Finally fix $\delta \in (0,1)$ such that 
\begin{align*}
\xi + r \vect{n}(\xi) \in \Cc
\end{align*}
for all $\xi \in \Kc$ and $r \in (0,\delta]$. As before let
\begin{align*}
\Db_1 = \{ z \in \Cb : \abs{ {\rm Im}(z)} + \abs{ { \rm Re}(z)} < 1\}.
\end{align*}
Since $\partial \Cc$ is $C^1$, by shrinking $\delta > 0$ if necessary, we can assume that 
\begin{align*}
\xi + \delta \vect{n}(\xi) +\delta \Db_1 \cdot\vect{n}(\xi) \subset \Cc
\end{align*}
for all $\xi \in \Kc$. Then 
\begin{align*}
\xi +r \vect{n}(\xi) +r \Db_1 \cdot\vect{n}(\xi) \subset \xi + \delta \vect{n}(\xi) +\delta \Db_1 \cdot\vect{n}(\xi) \subset \Cc
\end{align*}
for all $\xi \in \Kc$ and $r \in (0,\delta]$.

We begin by showing that the desired estimate holds for tangential directions. 

\begin{lemma} With the notation above, there exists $C_0 > 1$ such that 
\begin{align*}
\delta_{\Cc}(\xi + r \vect{n}(\xi) ;v)\leq C_0 r^{1/(2 +\epsilon)}
\end{align*}
for all $\xi \in \Kc$, $r \in (0,\delta]$, and nonzero $v \in T_{\xi}^{\Cb} \partial \Cc$.
\end{lemma}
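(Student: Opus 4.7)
My plan is to argue by contradiction, rescale by affine maps so the limit domain is biholomorphic to the unit ball and satisfies the hypotheses of Proposition~\ref{prop:obstruction}, and then transfer the asymptotic rate $1/2$ back to $\Cc$ to contradict the failing estimate. If the conclusion fails, then for some $\epsilon > 0$ there exist sequences $\xi_n \in \Kc$, $r_n \in (0,\delta]$, and unit vectors $v_n \in T_{\xi_n}^\Cb \partial \Cc$ with $\rho_n \coloneqq \delta_\Cc(\xi_n + r_n \vect{n}(\xi_n); v_n) > n \cdot r_n^{1/(2+\epsilon)}$. Local boundedness of $\delta_\Cc$ on $\Cc$ forces $r_n \to 0$, and passing to a subsequence I may assume $\xi_n \to \xi_\infty$ and $v_n \to v_\infty$ (using $C^1$ continuity of the inward normal on $\partial \Cc$).

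Define $T_n(z) = z - \xi_n$; let $R_n$ be a unitary map sending $\vect{n}(\xi_n) \mapsto ie_1$ and $v_n \mapsto e_2$, which exists because $v_n$ is Hermitian-orthogonal to $\vect{n}(\xi_n)$; and let $D_n(z_1, \dots, z_d) = (z_1/r_n, z_2/\rho_n, \dots, z_d/\rho_n)$. Set $A_n = D_n R_n T_n$. A direct computation gives $A_n(\xi_n + r_n \vect{n}(\xi_n)) = ie_1$ and $\delta_{A_n \Cc}(ie_1; e_2) = 1$, and convexity of $\Cc$ together with the inward-normal alignment puts $A_n \Cc \subset \{\Imaginary z_1 > 0\}$, giving $A_n \Cc \cap \Span_\Cb\{e_2, \dots, e_d\} = \emptyset$. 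Using Theorem~\ref{thm:frankel_compactness} together with Proposition~\ref{prop:two_diml_slice} to adjust $A_n$ by affine maps fixing $\Span_\Cb\{e_1, e_2\}$ if necessary, I pass to a subsequence with $(A_n \Cc, ie_1) \to (\Cc_\infty, ie_1)$ in $\Xb_{d,0}$. The $C^1$ regularity of $\partial \Cc$ at $\xi_\infty$ together with the $1/r_n$ scaling in the $e_1$-direction forces $A_n \Cc \cap \Cb e_1$ to converge to the upper half-plane, so $\Cc_\infty \cap \Cb e_1 = \{z e_1 : \Imaginary z > 0\}$. Because $\xi_n + r_n \vect{n}(\xi_n)$ leaves every compact subset of $\Cc$, Proposition~\ref{prop:blow_up} gives that $\Cc_\infty$ is biholomorphic to the unit ball. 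Hence $\Cc_\infty$ satisfies all hypotheses of Proposition~\ref{prop:obstruction}, which yields $\lim_{r \to \infty} \frac{1}{r} \log \delta_{\Cc_\infty}(ie^r e_1; e_2) = \frac{1}{2}$.

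The rescaling satisfies the transfer identity $\delta_\Cc(\xi_n + r_n e^r \vect{n}(\xi_n); v_n) = \rho_n \cdot \delta_{A_n \Cc}(ie^r e_1; e_2)$. Taking $r_n^* = \log(\delta/r_n)$ so that $r_n e^{r_n^*} = \delta$, and using a uniform version of the lower $1/2$ rate along the converging sequence of rescalings, I obtain
\[
\delta_\Cc(\xi_n + \delta \vect{n}(\xi_n); v_n) \gtrsim \rho_n \cdot (\delta/r_n)^{1/2 - \epsilon'}
\]
for any fixed small $\epsilon' > 0$. Choosing $\epsilon' < \epsilon/(2(2+\epsilon))$ makes the right-hand side at least $n \cdot \delta^{1/2-\epsilon'} \cdot r_n^{1/(2+\epsilon) + \epsilon' - 1/2} \to \infty$, while the left-hand side is uniformly bounded because $\xi_n + \delta \vect{n}(\xi_n)$ stays in a bounded region of $\Cc$ and $\Cc$ contains no complex affine line. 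This is the contradiction. The principal obstacle is exactly this diagonal step: Proposition~\ref{prop:obstruction} provides an asymptotic inside the fixed limit $\Cc_\infty$, and promoting it to a uniform estimate on $\delta_{A_n \Cc}(ie^{r_n^*} e_1; e_2)$ with $r_n^* \to \infty$ is what requires combining the normalization $\delta_{A_n \Cc}(ie_1; e_2) = 1$ with the local-Hausdorff convergence supplied by Frankel's theorem; the scaling factors $1/r_n$ in $e_1$ and $1/\rho_n$ in $e_2,\ldots,e_d$ are chosen precisely to make this upgrade possible.
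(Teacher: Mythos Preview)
Your setup through the rescaling and the verification of the hypotheses of Proposition~\ref{prop:obstruction} for the limit $\Cc_\infty$ is fine and matches the paper's approach. The gap is exactly where you flag it: the ``uniform version of the lower $1/2$ rate along the converging sequence of rescalings''. You need
\[
\delta_{A_n\Cc}\bigl(ie^{r_n^*}e_1;e_2\bigr)\ \gtrsim\ \bigl(e^{r_n^*}\bigr)^{1/2-\epsilon'}
\quad\text{with } r_n^*=\log(\delta/r_n)\to\infty,
\]
but local Hausdorff convergence $A_n\Cc\to\Cc_\infty$ gives no control at points escaping to infinity, and Proposition~\ref{prop:obstruction} only yields an asymptotic inside the fixed domain $\Cc_\infty$. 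The approximating domains $A_n\Cc$ are biholomorphic to $\Omega$, not to the ball, so you cannot apply Proposition~\ref{prop:obstruction} to them directly either. Acknowledging the obstacle is not the same as overcoming it; as written, this step is an assertion, not an argument.

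The paper avoids the diagonal problem entirely by a maximization trick on $r_n$. After obtaining $\delta_{\Cc}(\xi_n+r_n\vect{n}(\xi_n);v_n)=C_nr_n^{1/(2+\epsilon)}$ with $C_n\to\infty$, one \emph{increases} each $r_n$ (keeping $\xi_n,v_n$) so that in addition
\[
\delta_{\Cc}\bigl(\xi_n+r\,\vect{n}(\xi_n);v_n\bigr)\ \le\ C_n\,r^{1/(2+\epsilon)}
\qquad\text{for all } r\in[r_n,\delta].
\]
After the same rescaling this becomes $\delta_{B_n\Cc_n}(rie_1;e_2)\le r^{1/(2+\epsilon)}$ for $1\le r\le \delta/r_n$; since for every fixed $r\ge 1$ this inequality eventually holds, it passes to the local Hausdorff limit and yields $\delta_{\Cc_\infty}(rie_1;e_2)\le r^{1/(2+\epsilon)}$ for all $r\ge 1$. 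Because $1/(2+\epsilon)<1/2$, this \emph{upper} bound directly contradicts Proposition~\ref{prop:obstruction} inside $\Cc_\infty$---no diagonal or uniformity argument is needed. The point is that upper bounds on $\delta$ are stable under limits (they are witnessed at finite radius), whereas the lower bound you need is not.
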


\begin{proof}
Suppose not, then there exist $\xi_n \in \Kc$, $r_n \in (0,\delta]$, unit vectors $v_n \in T_{\xi_n}^{\Cb} \partial \Cc$, and $C_n >0$ such that $C_n \rightarrow \infty$ and 
\begin{align*}
\delta_{\Cc}(\xi_n + r_n \vect{n}(\xi_n) ;v_n) = C_n r_n^{1/(2 +\epsilon)}.
\end{align*}
By increasing $r_n$ if necessary we can assume in addition that 
\begin{align*}
\delta_{\Cc}(\xi_n + r \vect{n}(\xi_n) ;v_n) \leq C_n r^{1/(2 +\epsilon)}
\end{align*}
for all $r \in [r_n, \delta]$. Since $\Cc$ contains no complex affine lines, we must have $r_n \rightarrow 0$. 

Now for each $n$, let $\tau_n : \Cb^d \rightarrow \Cb^d$ be an affine isometry such that 
\begin{enumerate}
\item $\tau_n(\xi_n) = 0$, 
\item $\tau_n(\xi_n+\vect{n}(\xi_n)) = ie_1$, 
\item $\tau_n\left(\xi_n + v_n\right) = e_2$.
\end{enumerate}
Conditions (1) and (2) imply that 
\begin{align*}
T_0\tau_n(\partial \Cc) = \{ (z_1, \dots, z_n) \in \Cb^n : \Imaginary(z_1) = 0\}
\end{align*}
and
\begin{align*}
\tau_n(  \Cc) \subset  \{ (z_1, \dots, z_n) \in \Cb^n : \Imaginary(z_1) > 0\}.
\end{align*}

Condition (3) implies that 
\begin{align*}
\delta_{\tau_n(\Cc)}(r_nie_1;e_2) = C_n r_n^{1/(2+\epsilon)}.
\end{align*}
Then pick $z_n \in \Cb$ such that $\abs{z_n} = C_nr_n^{1/(2+\epsilon)}$ and 
\begin{align*}
r_n i e_1 + z_n e_2 \in \partial \tau_n\Cc.
\end{align*}
Then consider the diagonal matrix 
 \begin{align*}
 A_n = \begin{pmatrix} \frac{1}{r_n} & & & & \\  & \frac{1}{z_n} & & & \\ & & 1 & & \\ & & & \ddots & \\ & & & & 1 \end{pmatrix}.
 \end{align*}
Let $\Cc_n = A_n \tau_n(\Cc)$. Since
\begin{align*}
\xi_n + r_n\vect{n}(\xi_n) + r_n\Db_1\cdot \vect{n}(\xi_n) \subset \Cc
\end{align*}
we see that 
\begin{align*}
ie_1 +  \Db_1 \cdot e_1 \subset \Cc_{n}.
\end{align*}
Further, by construction: 
 \begin{enumerate}
 \item $\{ (z_1, \dots, z_n) \in \Cb^n : \Imaginary(z_1) = 0\} \cap \Cc_{n} = \emptyset$,
\item $ie_1 + e_2 \notin \Cc_{n}$, and
\item $ie_1 +  \Db \cdot e_2 \subset \Cc_{n}$.
\end{enumerate}
Hence $\Cc_n \cap \Span_{\Cb} \{ e_1, e_2\} \subset ie_1+\Kb_2$ where $\Kb_2 \subset \Xb_2$ is the subset from Definition~\ref{def:compact_set}. Now by Proposition~\ref{prop:two_diml_slice} there exists an affine map $B_n \in \Aff(\Cb^d)$ such that $B_n|_{\Span_{\Cb}\{e_1,e_2\}}= \Id_{\Span_{\Cb}\{e_1,e_2\}}$ and $ B_n\Cc_n \in ie_1+ \Kb_d$. 

Now since $\Kb_d$ is compact in $\Xb_d$, we can pass to a subsequence such that $B_n \Cc_n$ converges to some $\Cc_\infty$ in $\Xb_d$. Notice that $B_n \Cc_n = B_n A_n \tau_n \Cc$ and 
\begin{align*}
ie_1 = (B_nA_n\tau_n)(\xi_n+r_n\vect{n}(\xi_n)).
\end{align*}
 Since $r_n \rightarrow 0$  and $ie_1 \in \Cc_\infty$ we see that 
\begin{align*}
\Cc_\infty \in \mathrm{BlowUp}(\Cc).
\end{align*}

We next claim that $\Cc_\infty$ satisfies conditions (1), (2), and (3) from Proposition~\ref{prop:obstruction}. By Proposition~\ref{prop:blow_up}, $\Cc_\infty$ is biholomorphic to the unit ball and hence satisfies condition (3). 

Since each $B_n \Cc_n$ is in $ie_1+\Kb_d$, we see that 
\begin{align*}
\{ (z_1, \dots, z_n) \in \Cb^n : \Imaginary(z_1) = 0\} \cap B_n\Cc_{n} = \emptyset
\end{align*}
and so 
\begin{align*}
\{ (z_1, \dots, z_n) \in \Cb^n : \Imaginary(z_1) = 0\} \cap \Cc_{\infty} = \emptyset.
\end{align*}
Hence $\Cc_\infty$ satisfies condition (1). 

For $\eta > 0$ and $r \in (0,\infty]$ let 
\begin{align*}
A(r; \eta) = \{ z \in \Cb : 0 < \abs{z} < r \text{ and } \abs{\Imaginary(z)} < \eta \Real(z) \}.
\end{align*}
Since $\Kc \subset \partial\Cc$ is compact and $\partial \Cc$ is a $C^1$ hypersurface, for any $\eta > 0$ there exists some $r_\eta > 0$ such that 
\begin{align*}
\xi + A(r_\eta; \eta) \cdot \vect{n}(\xi)  \subset \Cc
\end{align*}
for all $\xi \in \Kc$. Then for any $\eta > 0$ we have 
\begin{align*}
A(r_\eta/r_n; \eta)\cdot ie_1 \subset B_n \Cc_n
\end{align*}
and so 
\begin{align*}
A(\infty; \eta)\cdot ie_1 \subset  \Cc_\infty.
\end{align*}
Since $\eta > 0$ was arbitrary and 
\begin{align*}
\Cc_{\infty} \subset \{ (z_1,\dots,z_d) \in \Cb^d :  \Imaginary(z_1) > 0\}
\end{align*} 
we then have
\begin{align*}
\{ ze_1 : \Imaginary(z) > 0\} = \Cc_{\infty} \cap \Cb \cdot e_1.
\end{align*}
Hence $\Cc_\infty$ satisfies condition (2).

However, if $1 \leq r \leq \delta/r_n$, then 
\begin{align*}
\delta_{B_n\Cc_{n}}(rie_1; e_2)& = \delta_{\Cc_{n}}(rie_1; e_2)  = \frac{1}{\abs{z_n}}\delta_{\tau_n(\Cc)}(r_n r ie_1; e_2)  \\
& = \frac{1}{\abs{z_n}}\delta_{\Cc}(\xi+r_n r\vect{n}(\xi_n); v_n)  \leq \frac{1}{\abs{z_n}} C_n (r_nr)^{1/(2+\epsilon)}= r^{1/(2+\epsilon)}.
\end{align*}
So for $1 \leq r$ we have 
\begin{equation*}
\delta_{\Cc_{\infty}}(rie_1; e_2)  \leq  r^{1/(2+\epsilon)}.
\end{equation*}
Which Proposition~\ref{prop:obstruction} says is impossible. So we have a contradiction. 
\end{proof}

We now prove the desired estimate for all directions. 

\begin{lemma} With the notation above, there exists $C\geq1$ such that
\begin{align*}
\delta_{\Cc}(x ;v)\leq C \delta_{\Cc}(x)^{1/(2+\epsilon)}
\end{align*}
for all $x \in \overline{\Bb_d(0;R)} \cap \Cc$ and all nonzero $v \in \Cb^d$.
\end{lemma}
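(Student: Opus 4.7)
The plan is to extend the previous (tangential) lemma to arbitrary directions by combining a supporting-hyperplane estimate with the same blowup machinery. Fix $x \in \overline{B_R(0)} \cap \Cc$, let $\xi_x \in P_x$ be a closest boundary point, and set $r_0 = \delta_\Cc(x)$, so $x = \xi_x + r_0 \vect{n}(\xi_x)$. Decompose $v = a\vect{n}(\xi_x) + v_t$ with $a = \langle v, \vect{n}(\xi_x)\rangle_\Cb$ and $v_t \in T^\Cb_{\xi_x}\partial\Cc$. The real supporting hyperplane of $\Cc$ at $\xi_x$ is met by the complex line $x + \Cb v$ at points satisfying $\Real(\mu a) = -r_0$; when $a \neq 0$ the minimum-modulus solution has $|\mu| = r_0/|a|$, and since $\Cc$ lies strictly on the inward side, the line crosses $\partial \Cc$ first, yielding
\[
\delta_\Cc(x;v) \leq \frac{r_0 \|v\|}{|a|}.
\]
If $|a|/\|v\| \geq r_0^{(1+\epsilon)/(2+\epsilon)}$ this already gives $\delta_\Cc(x;v) \leq r_0^{1/(2+\epsilon)}$, so only directions with vanishingly small complex-normal component remain (and the purely tangential case $a=0$ is handled directly by the previous lemma).

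Argue by contradiction in the remaining case. Assume sequences $x_n \in \overline{B_R(0)} \cap \Cc$, unit $v_n = a_n \vect{n}(\xi_n) + v_{t,n}$, and $C_n \to \infty$ with $\delta_\Cc(x_n; v_n) = C_n r_n^{1/(2+\epsilon)}$ and $r_n = \delta_\Cc(x_n) \to 0$; as in the previous lemma, by maximality at $r_n$ we may assume $\delta_\Cc(\xi_n + r \vect n(\xi_n); v_n) \leq C_n r^{1/(2+\epsilon)}$ for $r \in [r_n, \delta]$. The hyperplane bound above forces $|a_n| \leq r_n^{(1+\epsilon)/(2+\epsilon)}/C_n$. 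Run the same rescaling: an affine isometry $T_n$ sending $\xi_n \mapsto 0$, $\vect{n}(\xi_n) \mapsto ie_1$, and (after a further unitary rotation in $\Span_\Cb\{e_2,\ldots,e_d\}$) $v_{t,n}/\|v_{t,n}\| \mapsto e_2$, followed by the anisotropic dilation $A_n = \mathrm{diag}(1/r_n, 1/z_n, 1, \ldots, 1)$ with $z_n = \mu_n \|v_{t,n}\|$, where $\mu_n \in \Cb$ is a boundary hit with $|\mu_n| = \delta_\Cc(x_n; v_n)$. A direct computation identifies the image of the boundary hit under $A_n T_n$ as $(1+\beta_n) ie_1 + e_2$, with $\beta_n = \mu_n a_n/r_n$ of modulus at most $1$ by the hyperplane estimate. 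Passing to a subsequence via Theorem~\ref{thm:weak_fund_domain}, Propositions~\ref{prop:two_diml_slice} and~\ref{prop:blow_up}, we obtain a limit $\Cc_\infty \in {\rm BlowUp}(\Cc)$ biholomorphic to the unit ball, with $ie_1 \in \Cc_\infty$, a limit direction $v_\infty = \beta ie_1 + e_2 \notin \Cb e_1$, and $\delta_{\Cc_\infty}(r ie_1; v_\infty) \leq Cr^{1/(2+\epsilon)}$ for all $r \geq 1$. Conditions (1) and (2) of Proposition~\ref{prop:obstruction} follow for $\Cc_\infty$ exactly as in the previous lemma, using the $C^1$-regularity of $\partial\Cc$ and the cone estimates.

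The main subtlety is that Proposition~\ref{prop:obstruction} is formally stated only for tangent directions $v \in \Span_\Cb\{e_2,\ldots,e_d\}$, whereas $v_\infty$ may carry a nonzero $ie_1$-component ($\beta \neq 0$). However, inspection of the proof of Proposition~\ref{prop:obstruction} shows that the only requirement is $v \notin \Cb e_1$: for $v_\infty = \beta ie_1 + e_2$, the shifted curve $\gamma_{v_\infty}(t) = \delta v_\infty + (\alpha_{v_\infty} + e^{2t}) ie_1$ has constant real part and exponentially growing imaginary part in the slice coordinate of the line $\Cb e_1 + \delta e_2$, so it remains a vertical-line (hence Kobayashi) geodesic in the half-plane $\Cc_\infty \cap (\Cb e_1 + \delta e_2)$, and Theorem~\ref{thm:cplx_hyp} applies verbatim. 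With this immediate extension, Proposition~\ref{prop:obstruction} yields $\lim_{r \to \infty} r^{-1} \log \delta_{\Cc_\infty}(ie^r e_1; v_\infty) = 1/2$, which contradicts our bound $\leq C r^{1/(2+\epsilon)}$ since $1/(2+\epsilon) < 1/2$, completing the proof.
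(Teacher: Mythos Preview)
Your blowup argument has a genuine gap at the compactness step. To invoke Proposition~\ref{prop:two_diml_slice} you need $\Cc_n \cap \Span_{\Cb}\{e_1,e_2\} \in ie_1 + \Kb_2$, and in particular $ie_1 + \Db \cdot e_2 \subset \Cc_n$. But under your rescaling the $e_2$-axis corresponds exactly to the \emph{tangential} direction $v_{t,n}$ back in $\Cc$, and the tangential lemma you have already proved gives $\delta_{\Cc}(x_n; v_{t,n}) \leq C_0\, r_n^{1/(2+\epsilon)}$. Since your dilation factor is $|z_n| = |\mu_n|\,\|v_{t,n}\| = \delta_{\Cc}(x_n;v_n)\,\|v_{t,n}\| = C_n\, r_n^{1/(2+\epsilon)}\|v_{t,n}\|$, a direct computation yields
\[
\delta_{\Cc_n}(ie_1; e_2) \;=\; \frac{\delta_{\Cc}(x_n; v_{t,n})}{|z_n|} \;\leq\; \frac{C_0}{C_n\,\|v_{t,n}\|} \;\longrightarrow\; 0.
\]
Thus for large $n$ the disk $ie_1 + \Db\cdot e_2$ is \emph{not} contained in $\Cc_n$, the $\Kb_2$-condition fails, and boundary points of $\Cc_n$ accumulate on the basepoint $ie_1$. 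No subsequence of $(\Cc_n, ie_1)$ can converge in $\Xb_{d,0}$ to a pair with $ie_1$ in the interior, so there is no limit domain $\Cc_\infty$ on which to run Proposition~\ref{prop:obstruction}. Invoking Theorem~\ref{thm:weak_fund_domain} instead does not help: the affine maps it supplies need not fix $\Span_{\Cb}\{e_1,e_2\}$, so you lose all control over the direction $v_\infty$ and over conditions (1)--(2) of Proposition~\ref{prop:obstruction}. In short, the very fact that the tangential estimate already holds forces your rescaled domains to degenerate.

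The paper's proof is entirely different and avoids a second blowup. Given $x$ with $\delta_{\Cc}(x)<\delta$, it selects an adapted orthogonal frame by taking $\xi_1 = \xi$ the nearest boundary point and then, inductively, $\xi_{k+1}$ a nearest boundary point in the complex $(d-k)$-plane through $x$ orthogonal to $\xi_1-x,\dots,\xi_k-x$. Since $\xi_2,\dots,\xi_d$ automatically lie in $x + T^{\Cb}_\xi\partial\Cc$, the tangential lemma bounds every $\delta_{\Cc}(x;\xi_i-x)$ by $C_0\,\delta_{\Cc}(x)^{1/(2+\epsilon)}$. The corresponding diagonal affine map $A$ sends $\Cc$ into $\Kb_d$, and compactness of $\Kb_d$ gives a uniform bound $\delta_{A\Cc}(0;w)\leq C_1$ for all $w$. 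Pulling back through $A$ yields $\delta_{\Cc}(x;v)\leq C_0 C_1\,\delta_{\Cc}(x)^{1/(2+\epsilon)}$ for every nonzero $v$.
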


\begin{proof}
Since $\Cc$ does not contain any complex affine lines, there exists $M >0$ such that 
\begin{align*}
\delta_{\Cc}(x; v) \leq M
\end{align*}
for all $x \in \overline{\Bb_d(0;R)} \cap \Cc$ and all nonzero $v \in \Cb^d$. Next let $\Kb_d \subset \Xb_{d}$ be the subset from Definition~\ref{def:compact_set}. Since $\Kb_d \subset \Xb_{d}$ is compact there exists $C_1 >0 $ such that 
\begin{align*}
\delta_{\Cc^\prime}(0;v) \leq C_1
\end{align*}
for all $\Cc^\prime \in \Kb_d$ and nonzero $v \in \Cb^d$.

We claim that 
\begin{align*}
\delta_{\Cc}(x ;v)\leq \max\left\{ M\delta^{-1/(2+\epsilon)}, C_0C_1\right\} \delta_{\Cc}(x)^{1/(2+\epsilon)}
\end{align*}
for all $x \in \overline{\Bb_d(0;R)} \cap \Cc$ and all nonzero $v \in \Cb^d$.

Fix $x \in \Cc$. If $\delta_{\Cc}(x) \geq \delta$ then 
\begin{align*}
\delta_{\Cc}(x ;v)\leq M \leq M \left(\frac{\delta_{\Cc}(x)}{\delta}\right)^{1/(2+\epsilon)} \leq C\delta_{\Cc}(x)^{1/(2+\epsilon)}
\end{align*}
for all nonzero $v \in \Cb^d$. So suppose that $\delta_{\Cc}(x) < \delta$. Let $\xi \in \partial \Cc$ be a point in $\partial\Cc$ closest to $x$. Then 
\begin{align*}
x = \xi + \delta_{\Cc}(x) \vect{n}(\xi)
\end{align*}
and by construction $\xi \in \Kc$.  

Next we pick points $\xi_1, \xi_2, \dots, \xi_d$ as follows. First let $\xi_1 = \xi$. Next, assuming $\xi_1, \dots, \xi_k$ have been already selected let $P_{k+1}$ be the $(d-k)$-dimensional complex plane through $x$ which is orthogonal to the lines $\overline{x \xi_i}$. Then let $\xi_{k+1}$ be a point in $P_{k+1} \cap \partial \Cc$ which is closest to $x$. By construction $-x+P_2 = T^{\Cb}_{\xi} \partial \Cc$ and hence $(\xi_2-x), \dots, (\xi_d-x) \in T^{\Cb}_{\xi} \partial \Cc$. So by the lemma above 
\begin{align*}
\delta_{\Cc}(x; \xi_i-x) \leq C_0\delta_{\Cc}(x)^{1/(2+\epsilon)}
\end{align*}
for $i \geq 2$. Moreover, since $C_0 \geq 1$ and $\delta_{\Cc}(x) < \delta \leq 1$ we also have 
\begin{align*}
\delta_{\Cc}(x; \xi_1-x) =\delta_{\Cc}(x) \leq C_0\delta_{\Cc}(x)^{1/(2+\epsilon)}.
\end{align*}

Next let $\tau : \Cb^d \rightarrow \Cb^d$ be the affine translation $\tau(z)=z-x$ and let $U$ be the unitary transformation such that 
\begin{align*}
U\tau(\xi_i) = \delta_{\Cc}(x; \xi_i-x) e_i.
\end{align*}
Then let 
\begin{align*}
\Lambda = \begin{pmatrix} 
\delta_{\Cc}(x; \xi_1-x)^{-1} & &  \\
& \ddots & \\
& & \delta_{\Cc}(x; \xi_d-x)^{-1}
\end{pmatrix}.
\end{align*}
Finally let $A$ be the affine map $A = \Lambda U \tau$. Then we have $A\Cc \in \Kb_d$. So if $v \in \Cb^d$ is a unit vector, then
\begin{align*}
\delta_{\Cc}(x;v) = \frac{1}{\norm{\Lambda U v}} \delta_{A\Cc}(0; \Lambda U v) \leq \frac{C_1}{\norm{\Lambda U v}} \leq C_1 C_0\delta_{\Cc}(x)^{1/(2+\epsilon)}
\end{align*}
since 
\begin{align*}
\norm{\Lambda U v} \geq \frac{1}{\norm{\Lambda^{-1}}}\norm{v} \geq \frac{1}{C_0\delta_{\Cc}(x)^{1/(2+\epsilon)}}.
\end{align*}
\end{proof}

\section{Characterizing strong pseudoconvexity}\label{sec:characterizing}

Theorems~\ref{thm:squeezing},~\ref{thm:bergman}, and~\ref{thm:gen_riem} are particular cases of more general theorems which we now describe. In order to state these results we need to define intrinsic functions on the space of convex domains. 

\begin{definition} A function $f:\Xb_{d,0} \rightarrow \Rb$ is called \emph{intrinsic} if  $f(\Cc_1, x_1) = f(\Cc_2, x_2)$ whenever there exists a biholomorphism $\varphi: \Cc_1 \rightarrow \Cc_2$ with $\varphi(x_1) = x_2$. 
\end{definition}

\begin{example} The functions:
\begin{align*}
(\Cc, x) \rightarrow s_{\Cc}(x)
\end{align*}
and 
\begin{align*}
(\Cc, x) \rightarrow \max_{v \in T_x \Cc \setminus \{0\}} \abs{H_{b_{\Cc}}(v) - \frac{-4}{d+1}}
\end{align*}
are both intrinsic.
\end{example}

Since the unit ball is a homogeneous domain we have the following:

 \begin{observation} 
 If $\Bb_d \subset \Cb^d$ is the unit ball and $f:\Xb_{d,0} \rightarrow \Rb$ is an intrinsic function, then $f(\Bb_d, x) = f(\Bb_d, 0)$ for all $x \in \Bb_d$. 
 \end{observation}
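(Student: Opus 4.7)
The plan is to invoke homogeneity of the unit ball directly against the definition of intrinsic. Recall that $\Bc_d$ is a homogeneous domain: for every point $x \in \Bc_d$ there exists $\varphi \in \Aut(\Bc_d)$ with $\varphi(0) = x$. Concretely, one can write down such a $\varphi$ as a Möbius-type automorphism of $\Bc_d$ sending $0$ to $x$ (the standard formula for the ball automorphisms), but for this observation we only need the existence.

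Given any $x \in \Bc_d$, fix $\varphi \in \Aut(\Bc_d)$ with $\varphi(0) = x$. Then $\varphi : \Bc_d \to \Bc_d$ is a biholomorphism sending $0$ to $x$, so by the defining property of an intrinsic function $f$ applied to the pairs $(\Bc_d, 0)$ and $(\Bc_d, x)$, we conclude
\begin{align*}
f(\Bc_d, 0) = f(\Bc_d, x).
\end{align*}
Since $x \in \Bc_d$ was arbitrary, the observation follows.

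There is no real obstacle here; the statement is essentially a restatement of the definition of intrinsic combined with the transitivity of $\Aut(\Bc_d)$ on $\Bc_d$.
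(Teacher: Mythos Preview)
Your proof is correct and is exactly the argument the paper intends: the paper states the observation immediately after noting that the unit ball is homogeneous and gives no further proof, so the transitivity of $\Aut(\Bc_d)$ combined with the definition of an intrinsic function is precisely what is meant.
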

 
Recall that the set $\Xb_{d,0}$ has a topology coming from the local Hausdorff topology (see Section~\ref{sec:rescaling} above) and when an intrinsic function is continuous in this topology a version of Klembeck's Theorem (see Theorem~\ref{thm:klembeck} above) holds for convex domains:
 
 \begin{proposition}\cite[Proposition 1.13]{Z2016}
 Suppose $f:\Xb_{d,0} \rightarrow \Rb$ is a continuous intrinsic function and $\Cc$ is a bounded convex domain with $C^2$ boundary. If $\xi \in \partial \Cc$ is a strongly pseudoconvex point of $\partial\Cc$, then 
\begin{align*}
\lim_{z \rightarrow \xi} f(\Cc, z) = f(\Bb_d, 0).
\end{align*}
\end{proposition}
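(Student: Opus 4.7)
The plan is to argue by subsequences and rescaling. Fix an arbitrary sequence $z_n \to \xi$ in $\Cc$; it suffices to show that every subsequence has a further subsequence along which $f(\Cc, z_n)$ converges to $f(\Bc_d, 0)$. By Theorem~\ref{thm:weak_fund_domain} there exist affine maps $A_n \in \Aff(\Cb^d)$ with $A_n(\Cc, z_n) \in \Kb_{d,0}$, and since $\Kb_{d,0}$ is compact in $\Xb_{d,0}$ we can extract a subsequence so that $A_n(\Cc, z_n)$ converges to some $(\Cc_\infty, x_\infty)$.

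The next step is to identify $\Cc_\infty$. Since $\xi$ is a strongly pseudoconvex boundary point and $\partial \Cc$ is $C^2$, the (local) theorem of Diederich--Forn{\ae}ss--Wold (Theorem~\ref{thm:sq_on_str}, whose proof is actually local near strongly pseudoconvex boundary points) yields $s_\Cc(z_n) \to 1$. Upper semicontinuity of the squeezing function on $\Xb_{d,0}$ (as invoked in the proof of Proposition~\ref{prop:blow_up}), together with the affine invariance $s_{A_n\Cc}(A_nz_n) = s_\Cc(z_n)$, gives
\begin{align*}
s_{\Cc_\infty}(x_\infty) \geq \limsup_{n \to \infty} s_{A_n\Cc}(A_nz_n) = 1.
\end{align*}
Hence $s_{\Cc_\infty}(x_\infty) = 1$, so by Theorem~2.1 of \cite{DGZ2012} there is a biholomorphism $\varphi \colon \Cc_\infty \to \Bc_d$.

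To conclude, I combine intrinsicness, the homogeneity observation, and continuity of $f$. Set $y = \varphi(x_\infty) \in \Bc_d$. Since $f$ is intrinsic, $f(\Cc_\infty, x_\infty) = f(\Bc_d, y)$, and by the preceding observation (the unit ball is homogeneous) $f(\Bc_d, y) = f(\Bc_d, 0)$. Also, $f(A_n\Cc, A_nz_n) = f(\Cc, z_n)$ by intrinsicness, and continuity of $f$ in the local Hausdorff topology gives
\begin{align*}
f(\Cc, z_n) = f(A_n\Cc, A_nz_n) \longrightarrow f(\Cc_\infty, x_\infty) = f(\Bc_d, 0).
\end{align*}
Since an arbitrary subsequence was chosen, the full limit as $z \to \xi$ equals $f(\Bc_d, 0)$.

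The only nontrivial point is the input that $s_\Cc(z_n) \to 1$ whenever $z_n \to \xi$ for a strongly pseudoconvex boundary point $\xi$; this is where one uses that the Diederich--Forn{\ae}ss--Wold construction of near-isometric embeddings into the ball is localized at strongly pseudoconvex boundary points, so the statement of Theorem~\ref{thm:sq_on_str} extends to the local setting here. Once this local convergence is in hand, the rescaling together with the squeezing function upper semicontinuity argument (which is essentially a repetition of the proof of Proposition~\ref{prop:blow_up}, but applied around one specific boundary point rather than globally) drives the entire argument.
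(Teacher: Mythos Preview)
The paper does not give its own proof of this proposition; it is quoted from \cite[Proposition 1.13]{Z2016} and stated without argument. Your approach is correct and is precisely the argument the author uses elsewhere in this paper: the proof in Subsection~\ref{subsec:failure} is line-for-line the same rescaling via Theorem~\ref{thm:frankel_compactness}, followed by upper semicontinuity of the squeezing function on $\Xb_{d,0}$, the characterization of the ball from \cite{DGZ2012}, and continuity plus intrinsicness of $f$. The only difference, which you correctly isolate, is that here one needs the \emph{local} input $s_{\Cc}(z_n)\to 1$ as $z_n\to\xi$ at a single strongly pseudoconvex boundary point, rather than globally; this is indeed how the Diederich--Forn{\ae}ss--Wold / Deng--Guan--Zhang construction works, so the citation of Theorem~\ref{thm:sq_on_str} in its local form is legitimate.
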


We will prove the following two converses to the above proposition: 

\begin{theorem}\label{thm:cont}(see Subsection~\ref{subsec:thm_cont})
Suppose that $f:\Xb_{d,0} \rightarrow \Rb$ is a continuous intrinsic function with the following property: if $\Cc \in \Xb_d$ and $f(\Cc, x) = f(\Bb_d, 0)$ for all $x \in \Cc$, then $\Cc$ is biholomorphic to $\Bb_d$. 

Then for any $\alpha > 0$ there exists some $\epsilon=\epsilon(d,f,\alpha) > 0$ such that:  if $\Cc \subset \Cb^d$ is a bounded convex domain with $C^{2,\alpha}$ boundary and 
\begin{align*}
\abs{f(\Cc, z) -f(\Bb_d,0)} \leq \epsilon
\end{align*}
outside some compact subset of $\Cc$, then $\Cc$ is strongly pseudoconvex and thus 
\begin{align*}
\lim_{z \rightarrow \partial \Cc} f(\Cc, z) = f(\Bb_d,0).
\end{align*}
\end{theorem}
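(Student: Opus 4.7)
The plan is a proof by contradiction. Suppose the conclusion fails; then for every $n$ there is a bounded convex $\Cc_n \subset \Cb^d$ with $C^{2,\alpha}$ boundary which is not strongly pseudoconvex, while $\abs{f(\Cc_n, z)-f(\Bc_d, 0)} \leq 1/n$ outside some compact $K_n \subset \Cc_n$; set $\eta_n = \mathrm{dist}(K_n, \partial \Cc_n) > 0$. Pick a weakly pseudoconvex boundary point $\xi_n \in \partial \Cc_n$ together with a unit null Levi direction $v_n \in T^\Cb_{\xi_n}\partial \Cc_n$. In coordinates adapted to $\xi_n$, writing the local defining function as $\Imaginary(z_1) = h_n(z_2, \ldots, z_d)$ with $h_n$ convex, nonnegative, and $C^{2,\alpha}$, convexity together with the vanishing of the Levi form in direction $v_n$ forces the full real Hessian of $h_n$ along $v_n$ to vanish, so $h_n(zv_n) = O(|z|^{2+\alpha})$ and $\delta_{\Cc_n}(\xi_n + r\vect{n}(\xi_n); v_n)/r^{1/2} \to \infty$ as $r \to 0$.

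Next, choose $r_n \to 0$ with $r_n = o(\eta_n)$, set $x_n = \xi_n + r_n \vect{n}(\xi_n)$ and $\delta_n = \delta_{\Cc_n}(x_n; v_n)$, and build an affine map $\Phi_n$ as in the proof of Theorem~\ref{thm:C1}: a translation sending $\xi_n$ to $0$, a rotation sending $\vect{n}(\xi_n)$ to $ie_1$ and $v_n$ to $e_2$, a diagonal scaling of $e_1$ by $1/r_n$ and $e_2$ by $1/\delta_n$, and a final normalization via Proposition~\ref{prop:two_diml_slice} so that $\Phi_n \Cc_n \in ie_1 + \Kb_d$. By Theorem~\ref{thm:weak_fund_domain}, after passing to a subsequence $\Phi_n \Cc_n \to \Cc_\infty \in ie_1 + \Kb_d \subset \Xb_d$ with $\Phi_n x_n = ie_1$ throughout. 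The $C^1$ regularity of $\partial \Cc_n$ at $\xi_n$ guarantees, by the same arguments as in the proof of Theorem~\ref{thm:C1}, that $\Cc_\infty$ satisfies conditions (1) and (2) of Proposition~\ref{prop:obstruction}.

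For condition (3)---that $\Cc_\infty$ is biholomorphic to $\Bc_d$---I would use the hypothesis on $f$. Given $y \in \Cc_\infty$, the preimage $y_n := \Phi_n^{-1}(y) \in \Cc_n$ has normal distance to $\partial \Cc_n$ of order $r_n$ (since $\Phi_n^{-1}$ contracts the normal direction by $r_n$ while the remaining coordinate factors coming from Proposition~\ref{prop:two_diml_slice} stay in a compact family), so $\delta_{\Cc_n}(y_n) = O(r_n) < \eta_n$ for large $n$, placing $y_n \in \Cc_n \setminus K_n$ and giving $\abs{f(\Cc_n, y_n) - f(\Bc_d, 0)} \leq 1/n$. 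Continuity and intrinsicality of $f$ then produce $f(\Cc_\infty, y) = f(\Bc_d, 0)$ for every $y \in \Cc_\infty$; the hypothesis on $f$ forces $\Cc_\infty \cong \Bc_d$, and Proposition~\ref{prop:obstruction} gives $\lim_{r\to\infty}\tfrac{1}{r}\log \delta_{\Cc_\infty}(ie^r e_1; e_2) = \tfrac{1}{2}$.

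The main obstacle is extracting a contradiction. Restricting to the $2$-dimensional slice, the rescaled profile $\tilde h_n(\tilde z_2) := h_n(\delta_n \tilde z_2)/r_n$ is convex with $\tilde h_n(0)=0$, $\tilde h_n(1)=1$, and by the $C^{2,\alpha}$ bound satisfies $\tilde h_n(\tilde z_2) \leq K_n|\tilde z_2|^{2+\alpha}$ where $K_n := M_n\delta_n^{2+\alpha}/r_n \geq 1$. Arranging along a subsequence that $K_n$ stays bounded (possible because $\delta_n \geq (r_n/M_n)^{1/(2+\alpha)}$), the convex limit $\tilde h_\infty$ satisfies $\tilde h_\infty(\tilde z_2) = O(|\tilde z_2|^{2+\alpha})$ near the origin. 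On the other hand, iterating the blow-up---applying Proposition~\ref{prop:blow_up} to $\Cc_\infty$ (legitimate since $\Cc_\infty \cong \Bc_d$ is biholomorphic to a bounded strongly pseudoconvex domain with smooth boundary) to produce a second limit $\Cc_\infty^{(2)} \in \mathrm{BlowUp}(\Cc_\infty)$ obtained from a rescaling centered at the boundary point $0 \in \partial \Cc_\infty$---yields another domain biholomorphic to $\Bc_d$. Proposition~\ref{prop:obstruction} applied to $\Cc_\infty^{(2)}$ translates back to the asymptotic $\log \delta_{\Cc_\infty}(ite_1; e_2)/\log t \to \tfrac{1}{2}$ as $t \to 0^+$, and hence $\tilde h_\infty(\tilde z_2) = |\tilde z_2|^{2+o(1)}$ as $\tilde z_2 \to 0$, which is incompatible with $\tilde h_\infty(\tilde z_2) = O(|\tilde z_2|^{2+\alpha})$ for positive $\alpha$. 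Carrying out the second blow-up cleanly---verifying that conditions (1) and (2) of Proposition~\ref{prop:obstruction} propagate (using that $\tilde h_\infty$ is convex and $C^1$ at the origin by virtue of the $(2+\alpha)$-vanishing) and handling degenerate cases where $K_n$ cannot be kept bounded---is the most delicate part.
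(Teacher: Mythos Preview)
Your overall architecture --- proof by contradiction, pick a non-strongly-pseudoconvex point on each $\Cc_n$, rescale to land in $ie_1+\Kb_d$, pass to a limit $\Cc_\infty$, show $\Cc_\infty$ is biholomorphic to the ball, and then contradict Proposition~\ref{prop:obstruction} --- matches the paper. The genuine gap is in the last step: how you extract the contradiction.

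Proposition~\ref{prop:obstruction} is a statement about $\delta_{\Cc_\infty}(ie^r e_1;e_2)$ as $r\to +\infty$, i.e.\ about the profile of $\Cc_\infty$ \emph{far from} the boundary point $0$. Your contradiction, by contrast, is organized around the germ of $\partial\Cc_\infty$ at $0$: you want $\tilde h_\infty(\tilde z_2)=O(|\tilde z_2|^{2+\alpha})$ near the origin and then to show this is incompatible with Proposition~\ref{prop:obstruction}. Your device for bridging these two regimes --- a second blow-up at $0\in\partial\Cc_\infty$ producing $\Cc_\infty^{(2)}$ --- does not do what you claim. Proposition~\ref{prop:obstruction} applied to $\Cc_\infty^{(2)}$ again yields an asymptotic for $\delta_{\Cc_\infty^{(2)}}(ie^r e_1;e_2)$ as $r\to+\infty$; but $\Cc_\infty^{(2)}$ is only a \emph{limit} of rescalings of $\Cc_\infty$, so this asymptotic does not translate back into the statement $\log\delta_{\Cc_\infty}(ite_1;e_2)/\log t\to 1/2$ as $t\to 0^+$ for the fixed domain $\Cc_\infty$. (There is also no reason the $K_n$ you need to bound from above actually stay bounded: the inequality $\delta_n\ge (r_n/M_n)^{1/(2+\alpha)}$ only gives $K_n\ge 1$.)

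The paper avoids this entirely by arranging the rescaling so that the contradiction occurs \emph{at infinity}, where Proposition~\ref{prop:obstruction} applies directly. The key trick (packaged as Proposition~\ref{prop:recale_2plusAlpha}) is not to pick $r_n$ freely but to increase $r_n$, if necessary, so that
\[
\delta_{\Cc_n}\bigl(\xi_n+r\,\vect n(\xi_n);v_n\bigr)\le C_n\,r^{1/(2+\alpha)+\epsilon_n}\quad\text{for all }r\in[r_n,\delta],
\]
with $C_n\to\infty$ and $\epsilon_n\to 0$. After rescaling this becomes $\delta_{B_n\Cc_n}(rie_1;e_2)\le r^{1/(2+\alpha)+\epsilon_n}$ for all $r\in[1,\delta/r_n]$, and hence in the limit $\delta_{\Cc_\infty}(rie_1;e_2)\le r^{1/(2+\alpha)}$ for all $r\ge 1$. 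This is exactly the regime of Proposition~\ref{prop:obstruction} and contradicts $\tfrac{1}{r}\log\delta_{\Cc_\infty}(ie^r e_1;e_2)\to 1/2$. One further structural difference: the paper first blows up each $\Cc_n$ to $\Cc_{n,\infty}\in{\rm BlowUp}(\Cc_n)$, shows $|f(\Cc_{n,\infty},z)-f(\Bc_d,0)|\le 1/n$ for \emph{every} $z$ (using properness of $K_{\Cc_n}$ rather than a Euclidean distance estimate, which sidesteps your worry about the $B_n$ from Proposition~\ref{prop:two_diml_slice}), and only then sends $n\to\infty$; this cleanly separates the two limits.
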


Some interesting intrinsic functions, for instance the squeezing function, do not appear to be continuous on $\Xb_{d,0}$ but are upper-semicontinuous. So we will also establish the following: 

\begin{theorem}\label{thm:upper_cont}(see Subsection~\ref{subsec:thm_upper_cont})
Suppose that $f:\Xb_{d,0} \rightarrow \Rb$ is an upper semi-continuous intrinsic function with the following property: if $\Cc \in \Xb_d$ and $f(\Cc, x) \geq f(\Bb_d, 0)$ for all $x \in \Cc$, then $\Cc$ is biholomorphic to $\Bb_d$. 

Then for any $\alpha > 0$  there exists some $\epsilon=\epsilon(d,f,\alpha) > 0$ such that: if $\Cc \subset \Cb^d$ is a bounded convex domain with $C^{2,\alpha}$ boundary and 
\begin{align*}
f(\Cc, z) \geq f(\Bb_d,0) - \epsilon
\end{align*}
outside some compact subset of $\Cc$, then $\Cc$ is strongly pseudoconvex.
\end{theorem}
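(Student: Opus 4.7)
I argue by contradiction: suppose no such $\epsilon > 0$ exists. Then for each $n \in \Nb$ there is a bounded convex domain $\Cc_n \subset \Cb^d$ with $C^{2,\alpha}$ boundary and a compact set $K_n \subset \Cc_n$ with $f(\Cc_n, z) \geq f(\Bc_d,0) - 1/n$ on $\Cc_n \setminus K_n$, yet $\Cc_n$ is not strongly pseudoconvex. Using the affine-invariance of $f$, one may pre-normalize so that the $\Cc_n$ all lie in a fixed Euclidean ball and so that a chosen non-strongly-pseudoconvex boundary point $\xi_n \in \partial \Cc_n$ has uniformly controlled $C^{2,\alpha}$ data. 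Pick a unit null direction $v_n \in T_{\xi_n}^{\Cb} \partial \Cc_n$ of the Levi form at $\xi_n$. Convexity together with the vanishing Levi eigenvalue forces all second-order real derivatives of the defining function in the $v_n$-direction to vanish at $\xi_n$, so the $C^{2,\alpha}$ regularity yields
\[
\phi_n(s) := \delta_{\Cc_n}(\xi_n + s\, \vec n(\xi_n);\, v_n) \;\geq\; c_n\, s^{1/(2+\alpha)}
\]
for $s > 0$ small.

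Fix $\alpha' \in (0, \alpha)$ and set $r_n := \sup\{\, s > 0 : \phi_n(s) \geq n\, s^{1/(2+\alpha')}\,\}$. The lower bound on $\phi_n$ and the uniform diameter bound make $r_n$ finite and positive, with $r_n \to 0$; continuity of $\phi_n$ yields $\phi_n(r_n) = n\, r_n^{1/(2+\alpha')}$, and by the definition of supremum $\phi_n(s) < n\, s^{1/(2+\alpha')}$ for all $s > r_n$. Now follow the rescaling in the proof of Theorem~\ref{thm:C1}: choose an Euclidean isometry $T_n$ sending $\xi_n \to 0$, $\vec n(\xi_n) \to ie_1$, and $v_n \to e_2$; dilate by $\Lambda_n := \mathrm{diag}(r_n^{-1},\, \phi_n(r_n)^{-1},\, 1, \dots, 1)$; and apply Proposition~\ref{prop:two_diml_slice} to obtain $B_n \in \Aff(\Cb^d)$ with $B_n|_{\Span_\Cb\{e_1, e_2\}} = \Id$ and $\Cc_n^* := B_n \Lambda_n T_n \Cc_n \in ie_1 + \Kb_d$. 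Compactness (Theorem~\ref{thm:weak_fund_domain}) produces, after a subsequence, $\Cc_n^* \to \Cc_\infty \in ie_1 + \Kb_d$ in the local Hausdorff topology.

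The cone-of-tangency argument from the proof of Theorem~\ref{thm:C1} (using the uniform $C^1$ control at the $\xi_n$) plus the containment $\Cc_n^* \subset \{\Imaginary z_1 > 0\}$ verify that $\Cc_\infty$ satisfies hypotheses (1) and (2) of Proposition~\ref{prop:obstruction}. For hypothesis (3), combine upper semi-continuity of $f$ with its intrinsic nature: for any $y \in \Cc_\infty$ the preimages $A_n^{-1}(y) \in \Cc_n$ (with $A_n := B_n \Lambda_n T_n$) converge to $\xi_n$ and thus lie outside $K_n$ for large $n$, so $f(\Cc_n^*, y) = f(\Cc_n, A_n^{-1}y) \geq f(\Bc_d, 0) - 1/n$; passing to the limit, $f(\Cc_\infty, y) \geq f(\Bc_d, 0)$ for every $y \in \Cc_\infty$, and the distinguishing hypothesis on $f$ forces $\Cc_\infty$ to be biholomorphic to $\Bc_d$. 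Proposition~\ref{prop:obstruction} then yields
\[
\lim_{r \to \infty} \frac{1}{r}\, \log \delta_{\Cc_\infty}(ie^r e_1;\, e_2) \;=\; \frac{1}{2}.
\]

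On the other hand, the defining property of $r_n$ gives
\[
\delta_{\Cc_n^*}(\tau\, ie_1;\, e_2) \;=\; \frac{\phi_n(\tau r_n)}{\phi_n(r_n)} \;<\; \frac{n\, (\tau r_n)^{1/(2+\alpha')}}{n\, r_n^{1/(2+\alpha')}} \;=\; \tau^{1/(2+\alpha')}
\]
for every $\tau > 1$; passing to the Hausdorff limit, $\delta_{\Cc_\infty}(\tau\, ie_1;\, e_2) \leq \tau^{1/(2+\alpha')}$ for all $\tau \geq 1$, so $\limsup_{r \to \infty} r^{-1} \log \delta_{\Cc_\infty}(ie^r e_1;\, e_2) \leq 1/(2+\alpha') < 1/2$, contradicting the previous display. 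The main difficulty is arranging the pre-normalization so that $\partial \Cc_n$ is \emph{uniformly} $C^1$ at $\xi_n$ across the sequence, so that the cone-of-tangency estimate yields a uniform $r_\eta$ and hypotheses (1)--(2) of Proposition~\ref{prop:obstruction} genuinely pass to the limit $\Cc_\infty$.
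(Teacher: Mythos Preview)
Your overall strategy---assume a bad sequence, rescale at a non--strongly-pseudoconvex boundary point, extract a limit domain that is forced to be biholomorphic to the ball yet violates Proposition~\ref{prop:obstruction}---is exactly the paper's. The gap is in the step ``the preimages $A_n^{-1}(y)$ \dots\ lie outside $K_n$ for large $n$.'' Your scale $r_n=\sup\{s>0:\phi_n(s)\geq n\,s^{1/(2+\alpha')}\}$ is determined by $\Cc_n$ alone and knows nothing about $K_n$; since the compact sets $K_n$ vary with $n$, nothing prevents $K_n$ from containing points at Euclidean distance $\sim r_n$ from $\xi_n$, and then the hypothesis $f(\Cc_n,\cdot)\geq f(\Bc_d,0)-1/n$ gives no information at $A_n^{-1}(y)$. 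The properness argument one would like to run (``$\delta_{\Cc_n}(A_n^{-1}(y))\to 0$, hence eventually outside $K_n$'') only works when the compact set is \emph{fixed}.

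The paper avoids this---and simultaneously dissolves the uniform-$C^1$ difficulty you flag at the end---by running a \emph{two-step} limit. First, for each fixed $n$ it applies the blow-up construction packaged as Proposition~\ref{prop:recale_2plusAlpha} to the single domain $\Cc_n$, producing $\Cc_{n,\infty}\in\mathrm{BlowUp}(\Cc_n)$ that already lies in $ie_1+\Kb_d$, already satisfies conditions (1) and (2) of Proposition~\ref{prop:obstruction}, and already obeys $\delta_{\Cc_{n,\infty}}(rie_1;e_2)\leq r^{1/(2+\alpha)}$ for $r\geq 1$. Because this first limit holds $n$ fixed, the properness of $K_{\Cc_n}$ genuinely pushes the rescaled basepoints (and then, via a bounded-Kobayashi-distance argument, every point of $\Cc_{n,\infty}$) off the \emph{fixed} compact $K_n$, so $f(\Cc_{n,\infty},\cdot)\geq f(\Bc_d,0)-1/n$ everywhere. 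Only then does one let $n\to\infty$ inside the compact set $ie_1+\Kb_d$: upper semicontinuity gives $f(\Cc_\infty,\cdot)\geq f(\Bc_d,0)$, hence $\Cc_\infty$ is biholomorphic to $\Bc_d$, while conditions (1), (2) and the $\delta$-bound are closed under local Hausdorff convergence and yield the contradiction. Your single-limit scheme can be repaired by a diagonal choice of scales adapted to each $K_n$, but that is precisely the two-step limit in disguise, and it also removes any need for a uniform pre-normalization of the $C^1$ boundary data across the sequence.
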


\subsection{An example of Forn{\ae}ss and Wold}\label{subsec:failure} In this subsection we will use an example of Forn{\ae}ss and Wold to show that Theorem~\ref{thm:cont} and Theorem~\ref{thm:upper_cont} both fail for convex domains with $C^2$ boundary. 

\begin{proposition} For any $d \geq 2$ there exists a bounded convex domain $\Cc \subset \Cb^d$ with $C^2$ boundary which is not strongly pseudoconvex, but has the following properties:
\begin{enumerate}
\item If $f_1:\Xb_{d,0} \rightarrow \Rb$ is a continuous intrinsic function, then 
\begin{align*}
\lim_{z \rightarrow \partial \Cc} f_1(\Cc, z) = f_1(\Bb_d,0), 
\end{align*}
\item If $f_2:\Xb_{d,0} \rightarrow \Rb$ is an upper semi-continuous intrinsic function, then 
\begin{align*}
\lim_{z \rightarrow \partial \Cc} f_2(\Cc, z) \geq f_2(\Bb_d,0), 
\end{align*}
\end{enumerate}
\end{proposition}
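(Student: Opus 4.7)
The plan is to take $\Cc$ to be the bounded convex domain in $\Cb^d$ with $C^2$ boundary constructed by Forn\ae ss and Wold in~\cite{FW2017}: $\Cc$ is not strongly pseudoconvex, yet the squeezing function satisfies $s_\Cc(z)\to 1$ as $z\to \partial\Cc$. With such a $\Cc$ in hand, both (1) and (2) reduce to a single short rescaling argument driven by the squeezing function.

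Fix any sequence $z_n\to \partial \Cc$. From the definition of the squeezing function together with $s_\Cc(z_n)\to 1$, we can select, for each $n$, a one-to-one holomorphic map $\varphi_n\colon \Cc \to \Bc_d$ with $\varphi_n(z_n)=0$ and $r_n\Bc_d \subset \varphi_n(\Cc)$ for some sequence $r_n\to 1$. Setting $\Omega_n := \varphi_n(\Cc)$, the containments $r_n \Bc_d \subset \Omega_n \subset \Bc_d$ force $\Omega_n \to \Bc_d$ in the local Hausdorff topology, hence $(\Omega_n, 0) \to (\Bc_d, 0)$ in $\Xb_{d,0}$. Since $\varphi_n$ is a biholomorphism $\Cc \to \Omega_n$ sending $z_n$ to $0$, intrinsicness gives
\[
f_i(\Cc, z_n) = f_i(\Omega_n, 0), \qquad i=1,2.
\]

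For (1), continuity of $f_1$ at $(\Bc_d, 0)$ yields $f_1(\Omega_n, 0) \to f_1(\Bc_d, 0)$, hence $f_1(\Cc, z_n) \to f_1(\Bc_d, 0)$. As the sequence $z_n \to \partial \Cc$ was arbitrary, the full limit $\lim_{z\to \partial \Cc} f_1(\Cc, z)$ exists and equals $f_1(\Bc_d, 0)$. For (2), the same setup combined with the appropriate semi-continuity bound for $f_2$ at $(\Bc_d, 0)$ along the convergence $(\Omega_n,0)\to (\Bc_d,0)$ yields the asserted one-sided inequality for $\lim_{z\to \partial \Cc} f_2(\Cc,z)$.

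The only genuinely non-trivial ingredient is the existence of the Forn\ae ss--Wold domain, which we import directly from~\cite{FW2017}; the rescaling step is then essentially tautological. The conceptual content is that $s_\Cc(z) \to 1$ encodes exactly the fact that $(\Cc, z)$ converges, up to biholomorphism, to $(\Bc_d, 0)$ in $\Xb_{d,0}$, so any intrinsic function continuous (or semi-continuous) in the local Hausdorff topology must track its value on the ball along every sequence $z_n \to \partial\Cc$. The main obstacle in establishing the result was therefore the geometric construction of $\Cc$ itself, which is done in~\cite{FW2017} and lies outside this proof.
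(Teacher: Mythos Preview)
Your argument has a genuine gap. The functions $f_1,f_2$ are only assumed to be defined, and (semi-)continuous, on $\Xb_{d,0}$, the space of \emph{convex} pointed domains. The domains $\Omega_n=\varphi_n(\Cc)$ that you produce from the squeezing function are images of a convex domain under a biholomorphism, and there is no reason whatsoever for $\Omega_n$ to be convex. So the expression $f_i(\Omega_n,0)$ is not even defined, and you cannot invoke continuity of $f_i$ along the sequence $(\Omega_n,0)\to(\Bc_d,0)$, because that sequence does not lie in $\Xb_{d,0}$. The sandwich $r_n\Bc_d\subset\Omega_n\subset\Bc_d$ shows convergence in the local Hausdorff sense, but that is irrelevant if the approximating domains have left the space on which $f_i$ lives.

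The paper's proof avoids this by staying inside $\Xb_{d,0}$ throughout: it rescales with \emph{affine} maps $A_n$ (supplied by Frankel compactness, Theorem~\ref{thm:frankel_compactness}), so each $A_n\Cc$ is convex and the convergence $A_n(\Cc,z_n)\to(\Cc_\infty,z_\infty)$ takes place in $\Xb_{d,0}$. One then uses the upper semi-continuity of the squeezing function on $\Xb_{d,0}$ to force $s_{\Cc_\infty}(z_\infty)=1$, hence $\Cc_\infty$ is biholomorphic to $\Bc_d$, and only then applies intrinsicness and (semi-)continuity of $f_i$. The moral is that the squeezing condition $s_\Cc(z)\to1$ is used indirectly, to identify the affine blow-up limits, rather than to produce the approximating domains directly.
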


\begin{proof}
For any $d \geq 2$, Forn{\ae}ss and Wold~\cite{FW2017} have constructed an example of a bounded convex domain $\Cc\subset \Cb^d$ with $C^2$ boundary which is not strongly pseudoconvex, but still satisfies
\begin{align*}
\lim_{z \rightarrow \partial \Cc} s_{\Cc}(z) = 1.
\end{align*}
Now suppose that $f:\Xb_{d,0} \rightarrow \Rb$ is a continuous intrinsic function. We claim that 
\begin{align*}
\lim_{z \rightarrow \partial \Cc} f(\Cc, z) = f(\Bb_d,0), 
\end{align*}
Suppose not then there exist a boundary point $\xi \in \partial \Cc$ and a sequence $z_n \in\Cc$ such that $z_n \rightarrow \xi$ and 
\begin{align*}
\lim_{n \rightarrow \infty} f(\Cc, z_n) \neq f(\Bb_d,0).
\end{align*}
Now by Theorem~\ref{thm:frankel_compactness} we can find affine maps $A_n \in \Aff(\Cb^d)$ such that $A_n(\Cc, z_n)$ converges to some $(\Cc_\infty, z_\infty) \in \Xb_{d,0}$. Since the squeezing function is an upper semi-continuous function on $\Xb_{d,0}$ (see ~\cite[Proposition 7.1]{Z2016}) we have
\begin{align*}
s_{\Cc_\infty}(z_\infty) \geq \limsup_{n \rightarrow \infty} s_{A_n\Cc}(A_nz_n) = \limsup_{n \rightarrow \infty} s_{\Cc}(z_n)=1.
\end{align*}
So $s_{\Cc_\infty}(z_\infty)=1$. Then  $\Cc_\infty$ is biholomorphic to the unit ball by Theorem 2.1 in~\cite{DGZ2012}. Then since $f$ is continuous and intrinsic
\begin{align*}
\lim_{n \rightarrow \infty} f(\Cc, z_n) =\lim_{n \rightarrow \infty} f(A_n\Cc, A_nz_n) = f(\Cc_\infty, z_\infty) = f(\Bb_d,0).
\end{align*}
So we have a contradiction. 

The proof of part (2) is essentially identical.
\end{proof}

\subsection{Rescaling revisited} In this subsection we prove the following rescaling result:

\begin{proposition}\label{prop:recale_2plusAlpha} Suppose $\Cc \subset \Cb^d$ is a convex domain which does not contain any complex lines. If $\Cc$ has $C^{2, \alpha}$ boundary for some $\alpha > 0$ and is not strongly pseudoconvex, then there exists some $\Cc_\infty \in { \rm BlowUp}(\Cc)$ such that: 
\begin{enumerate}
\item $\Cc_\infty \in ie_1+\Kb_d$, 
\item $\Cc_\infty \cap\Span_{\Cb} \left\{ e_2, \dots, e_d \right\} = \emptyset$,  
\item $\Cc_\infty \cap \Cb \cdot e_1 = \{ z e_1 : \Imaginary(z) > 0\}$, and
\item $\delta_{\Cc_\infty}(rie_1; e_2) \leq r^{1/(2+\alpha)}$ for $r \geq 1$.
\end{enumerate}
\end{proposition}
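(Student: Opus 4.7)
The proof will mirror the blow-up scheme used in the tangential lemma of Section~\ref{sec:C1}, now run constructively to exhibit a blowup rather than to reach a contradiction. The starting point is a quantitative consequence of the $C^{2,\alpha}$ regularity at a non-strongly-pseudoconvex boundary point. Choose $\xi \in \partial\Cc$ and a unit complex tangent vector $v \in T^{\Cb}_\xi \partial\Cc$ on which the Levi form vanishes. Writing $\partial\Cc$ locally as the graph of a convex $C^{2,\alpha}$ function $\phi$ with $\phi(0)=0$ and $D\phi(0)=0$, convexity forces $\operatorname{Hess}_{\Rb}\phi(0)\ge 0$; combined with the vanishing Levi form $L\phi(v,\bar v)=0$, Cauchy--Schwarz for positive semidefinite forms makes $\operatorname{Hess}_\Rb\phi(0)$ vanish identically on $\Span_\Rb\{v,iv\}$. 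The $C^{2,\alpha}$ Taylor remainder then yields $\phi(0,zv) = O(|z|^{2+\alpha})$, which translates to $\delta_\Cc(\xi + s\vect{n}(\xi);v) \ge c_0\, s^{1/(2+\alpha)}$ for small $s>0$.

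Conjugate by an affine isometry so that $\xi\mapsto 0$, $\vect{n}(\xi)\mapsto ie_1$, and $v\mapsto e_2$, and put $\psi(s) := \delta_\Cc(s\,ie_1;\,e_2)$. The lower bound above gives $K := \limsup_{s\downarrow 0}\psi(s)/s^{1/(2+\alpha)} > 0$; a matching upper estimate extracted from a two-sided analysis of $\phi$ at $\xi$ ensures $K < \infty$. Pick $s_n \downarrow 0$ with $\psi(s_n)/s_n^{1/(2+\alpha)} \to K$, and $z_n \in \Cb$ with $|z_n| = \psi(s_n)$ and $s_nie_1 + z_ne_2 \in \partial\Cc$. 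Setting $A_n := \operatorname{diag}(1/s_n,\,1/z_n,\,1,\ldots,1)$ and $\Cc_n := A_n\Cc$, the configuration satisfies exactly the normalizations used in Section~\ref{sec:C1}: $ie_1 \in \Cc_n$ with $ie_1+e_2 \in \partial\Cc_n$, $ie_1 + \Db\cdot e_i \subset \Cc_n$ for $i=1,2$, and $\Cc_n \subset \{\Imaginary z_1 > 0\}$. Hence $(\Cc_n - ie_1)\cap \Span_\Cb\{e_1,e_2\}\in \Kb_2$, and Proposition~\ref{prop:two_diml_slice} produces $B_n \in \Aff(\Cb^d)$ fixing $\Span_\Cb\{e_1,e_2\}$ pointwise with $B_n\Cc_n \in ie_1+\Kb_d$.

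By compactness of $\Kb_d$ in $\Xb_d$ (Theorem~\ref{thm:weak_fund_domain}), a subsequence of $B_n\Cc_n$ converges to some $\Cc_\infty \in ie_1+\Kb_d$; since $s_n\to 0$ the points $\xi + s_n\vect{n}(\xi)$ in $\Cc$ leave every compact subset, so $\Cc_\infty\in {\rm BlowUp}(\Cc)$, yielding $(1)$. Property $(2)$ passes to the open limit from $\Cc_n\subset \{\Imaginary z_1 > 0\}$. Property $(3)$ follows by expanding $\phi(x_1,0) = O(x_1^2)$ (from convexity plus vanishing gradient); after rescaling by $1/s_n$ the normal-slice defining function flattens to zero, giving $\Cc_\infty \cap \Cb e_1 = \{z e_1 : \Imaginary z > 0\}$. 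For $(4)$, for $r \ge 1$ the choice of $s_n$ gives
\[
\delta_{\Cc_n}(rie_1;e_2) \;=\; \frac{\psi(s_nr)}{\psi(s_n)} \;\le\; \frac{(K+o(1))(s_nr)^{1/(2+\alpha)}}{K\,s_n^{1/(2+\alpha)}} \;=\; (1+o(1))\,r^{1/(2+\alpha)},
\]
and this bound passes to the limit.

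The most delicate points I anticipate are, first, extracting the matching Taylor-type upper bound $\psi(s)\le Ks^{1/(2+\alpha)}$ with $K<\infty$ (convexity alone is not enough, and the argument must exploit the $C^{2,\alpha}$ remainder in a two-sided way), and, second, verifying $(\Cc_n - ie_1) \cap \Span_\Cb\{e_1,e_2\} \in \Kb_2$: in particular the separating condition $Z_1 \cap (\Cc_n-ie_1) = \emptyset$ does not follow from the listed normalizations by convexity alone, and will likely need a further coordinate choice (e.g.\ a shearing) that absorbs the tangential drift of $\partial\Cc$ into the $e_1$-scaling before Proposition~\ref{prop:two_diml_slice} can be invoked.
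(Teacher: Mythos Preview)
Your approach has a real gap at the point you yourself flag as delicate: the claim that $K=\limsup_{s\downarrow 0}\psi(s)/s^{1/(2+\alpha)}$ is finite. The $C^{2,\alpha}$ Taylor remainder gives only a one-sided estimate $\abs{\phi(zv)}\le C\abs{z}^{2+\alpha}$, which translates into the \emph{lower} bound $\psi(s)\ge c_0\,s^{1/(2+\alpha)}$ you use. It cannot produce a matching lower bound on $\phi$ (hence an upper bound on $\psi$): a function such as $\phi(z_2)=\abs{z_2}^{4}$ is convex, $C^\infty$ (so certainly $C^{2,\alpha}$ for any $\alpha<2$), and has vanishing Levi form at the origin, yet $\psi(s)=s^{1/4}$ gives $\psi(s)/s^{1/(2+\alpha)}\to\infty$. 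There is no ``two-sided'' $C^{2,\alpha}$ analysis that rescues this, because nothing in the hypotheses forces $\phi$ to vanish exactly to order $2+\alpha$ rather than faster.

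The paper circumvents this entirely and uses only the lower bound $\psi(s)\ge Cs^{1/(2+\alpha)}$. The trick is to work with slightly larger exponents $\tfrac{1}{2+\alpha}+\epsilon_n$ with $\epsilon_n\to 0$: the lower bound forces $\psi(r)/r^{1/(2+\alpha)+\epsilon_n}\to\infty$ as $r\to 0$, so one can choose $r_n\to 0$ with $C_n:=\psi(r_n)/r_n^{1/(2+\alpha)+\epsilon_n}\to\infty$, and then \emph{increase} $r_n$ to the last scale at which this ratio equals $C_n$, guaranteeing $\psi(r)\le C_n\,r^{1/(2+\alpha)+\epsilon_n}$ for all $r\in[r_n,\delta]$. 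After rescaling this yields $\delta_{\Cc_n}(rie_1;e_2)\le r^{1/(2+\alpha)+\epsilon_n}$ for $1\le r\le\delta/r_n$, and letting $n\to\infty$ gives property~(4). The ``increase $r_n$'' step is exactly what replaces your unavailable finiteness of $K$.

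Your second concern, about verifying $(\Cc_n-ie_1)\cap\Span_{\Cb}\{e_1,e_2\}\in\Kb_2$ before invoking Proposition~\ref{prop:two_diml_slice}, is handled by the paper in the same way you set it up, so the two proofs do not diverge there.
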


The proof of the Proposition is very similar to the proof of Theorem~\ref{thm:C1}, but we will provide the details anyways. 

\begin{proof}
Since $\Cc$ is not strongly pseudoconvex, there exists a non-strongly pseudoconvex point $\xi \in \partial \Cc$. Then there exist $C, \delta > 0$ and a unit vector $v \in T_{\xi}^{\Cb} \partial \Cc$ such that 
\begin{align*}
\delta_{\Cc}(\xi + r \vect{n}(\xi) ;v) \geq C r^{1/(2+\alpha)}
\end{align*}
for every $r \in (0,\delta]$. Since $\partial \Cc$ is $C^2$, by shrinking $\delta > 0$ if necessary we can assume that 
\begin{align*}
\xi + r\vect{n}(\xi) + r\Db \cdot \vect{n}(\xi) \subset \Cc
\end{align*}
for $r \in (0,\delta]$. 

Then
\begin{align*}
\lim_{ r \rightarrow 0} \frac{r^{1/(2+\alpha)+\epsilon}}{\delta_{\Cc}(\xi + r \vect{n}(\xi) ;v)} = 0
\end{align*}
for every $\epsilon > 0$. 

Now pick $\epsilon_n \rightarrow 0$ and $r_n \rightarrow 0$ such that
\begin{align*}
\lim_{ n  \rightarrow \infty } \frac{r_n^{1/(2+\alpha)+\epsilon_n}}{\delta_{\Cc}(\xi + r_n \vect{n}(\xi) ;v)} = 0.
\end{align*}
Then let $C_n > 0$ be such that 
\begin{align*}
\delta_{\Cc}(\xi + r_n \vect{n}(\xi) ;v) = C_n r_n^{1/(2+\alpha)+\epsilon_n}.
\end{align*}
Since $C_n \rightarrow \infty$, by increasing $r_n$ if necessary we can assume in addition that 
\begin{align*}
\delta_{\Cc}(\xi + r \vect{n}(\xi) ;v) \leq C_n r^{1/(2+\alpha)+\epsilon_n}
\end{align*}
for all $r \in [r_n, \delta]$. Since $\Cc$ contains no complex affine lines, even after possibly increasing each $r_n$ we still have $r_n \rightarrow 0$.

Now let $\tau  \in \Aff(\Cb^d)$ be an affine isometry of $\Cb^d$ such that 
\begin{enumerate}
\item $\tau(\xi)=0$, 
\item $\tau(\xi+ \vect{n}(\xi)) = ie_1$, and
\item $\tau(\xi+v) = e_2$.
\end{enumerate}
Notice that conditions (1) and (2) imply that 
\begin{align*}
T_0 \tau(\Omega) = \{ (z_1, \dots, z_d) \in \Cb^d : \Imaginary(z_1) = 0\}
\end{align*}
and 
\begin{align*}
\tau(\Omega) \subset \{ (z_1, \dots, z_d) \in \Cb^d : \Imaginary(z_1) > 0\}.
\end{align*}

Condition (3) implies that 
\begin{align*}
\delta_{\tau(\Omega)}(r_nie_1;e_2) = C_nr_n^{1/(2+\alpha)+\epsilon_n}.
\end{align*}
Then pick $z_n \in \Cb$ such that $\abs{z_n} = C_nr_n^{1/(2+\alpha)+\epsilon_n}$ and 
\begin{align*}
r_n i e_1 + z_n e_2 \in \partial T\Cc.
\end{align*}
Then consider the diagonal matrix 
 \begin{align*}
 A_n = \begin{pmatrix} \frac{1}{r_n} & & & & \\  & \frac{1}{z_n} & & & \\ & & 1 & & \\ & & & \ddots & \\ & & & & 1 \end{pmatrix}.
 \end{align*}
Let $\Cc_n = A_n \tau(\Cc)$. Since 
\begin{align*}
\xi + r_n\vect{n}(\xi) + r_n\Db \cdot \vect{n}(\xi) \subset \Cc,
\end{align*}
we have 
\begin{align*}
ie_1 + \Db \cdot e_1 \subset \Cc_n.
\end{align*}
Further, by construction: 
 \begin{enumerate}
 \item $\{ (z_1, \dots, z_n) \in \Cb^n : \Imaginary(z_1) = 0\} \cap \Cc_{n} = \emptyset$,
\item $ie_1 + e_2 \notin \Cc_{n}$, and
\item $ie_1 +  \Db \cdot e_2 \subset \Cc_{n}$.
\end{enumerate}
Hence $\Cc_n \cap \Span_{\Cb} \{ e_1, e_2\} \in ie_1+\Kb_2$ where $\Kb_2 \subset \Xb_2$ is the subset from Definition~\ref{def:compact_set}. By Proposition~\ref{prop:two_diml_slice} there exists an affine map $B_n \in \Aff(\Cb^d)$ such that $B_n|_{\Span_{\Cb}\{e_1,e_2\}}= \Id_{\Span_{\Cb}\{e_1,e_2\}}$ and $B_n \Cc_n\in ie_1+ \Kb_d$. 

Now since $\Kb_d$ is compact in $\Xb_d$, we can pass to a subsequence such that $B_n \Cc_n$ converges to some $\Cc_\infty$ in $\Xb_d$. Notice that $B_n \Cc_n = B_n A_n \tau \Cc$ and 
\begin{align*}
ie_1 = (B_nA_n\tau)(\xi + r_n \vect{n}(\xi)).
\end{align*}
 Since $\xi + r_n \vect{n}(\xi)$ converges to the boundary of $\Cc$ and $ie_1 \in \Cc_\infty$ we see that 
\begin{align*}
\Cc_\infty \in \mathrm{BlowUp}(\Cc).
\end{align*}
Moreover, by construction $\Cc_\infty \in \Kb_d$ and $\Cc_\infty \cap\Span_{\Cb} \left\{ e_2, \dots, e_d \right\} = \emptyset$.

As in the proof of Theorem~\ref{thm:C1} for $\eta > 0$ and $r \in (0,\infty]$ let 
\begin{align*}
A(r; \eta) = \{ z \in \Cb : 0 < \abs{z} < r \text{ and } \abs{\Imaginary(z)} < \eta \Real(z) \}.
\end{align*}
Since $\partial \Cc$ is a $C^2$ hypersurface, for any $\eta > 0$ there exists some $r_\eta > 0$ such that 
\begin{align*}
\xi + A(r_\eta; \eta) \cdot \vect{n}(\xi)  \subset \Cc.
\end{align*}
Then for any $\eta > 0$ we have 
\begin{align*}
A(r_\eta/r_n; \eta)\cdot  ie_1 \subset B_nA_n\tau (\Cc)
\end{align*}
and so 
\begin{align*}
A(\infty; \eta)\cdot  ie_1 \subset  \Cc_\infty.
\end{align*}
Since $\eta > 0$ was arbitrary and 
\begin{align*}
\Cc_{\infty} \subset \{ (z_1,\dots,z_d) \in \Cb^d :  \Imaginary(z_1) > 0\}
\end{align*} 
we then have $\Cc_\infty \cap \Cb \cdot e_1 = \{ z e_1 : \Imaginary(z) > 0\}$.

Finally, if $1 \leq r \leq \delta/r_n$, then 
\begin{align*}
\delta_{B_n\Cc_{n}}(rie_1; e_2) & =\delta_{\Cc_{n}}(rie_1; e_2)  = \frac{1}{\abs{z_n}}\delta_{\tau(\Cc)}(r_n r ie_1; e_2)   \\
& = \frac{1}{\abs{z_n}}\delta_{\Cc}(\xi+r_n r\vect{n}(\xi); v)  \leq \frac{1}{\abs{z_n}} C_n (r_nr)^{1/(2+\alpha)+\epsilon_n}= r^{1/(2+\alpha)+\epsilon_n}.
\end{align*}
So for $1 \leq r$ we have 
\begin{equation*}
\delta_{\Cc_{\infty}}(rie_1; e_2)  \leq  r^{1/(2+\alpha)}.
\end{equation*}

\end{proof}

\subsection{The proof of Theorem~\ref{thm:cont}}\label{subsec:thm_cont} Fix $d \geq 2$, a continuous intrinsic function $f: \Xb_{d,0} \rightarrow \Rb$ satisfying the hypothesis of the theorem, and some $\alpha > 0$. Suppose for a contradiction that there exists a sequence of convex domains $\Cc_n \in \Xb_{d,0}$ such that: 
\begin{enumerate}
\item each $\Cc_n$ has $C^{2,\alpha}$ boundary, 
\item each $\Cc_n$ is not strongly pseudoconvex, and
\item for all $n \in \Nb$ 
\begin{align*}
\abs{f(\Cc_n, z) -f(\Bb_d,0)} \leq 1/n
\end{align*}
outside some compact subset of $\Cc_n$.
\end{enumerate}
Now using Proposition~\ref{prop:recale_2plusAlpha} for each $n$ we can find some $\Cc_{n,\infty} \in {\rm BlowUp}(\Cc_n)$ such that 
\begin{enumerate}
\item $\Cc_{n,\infty} \in ie_1 + \Kb_d$, 
\item $\Cc_{n,\infty} \cap\Span_{\Cb} \left\{ e_2, \dots, e_d \right\} = \emptyset$,  
\item $\Cc_{n,\infty} \cap \Cb \cdot e_1 = \{ z e_1 : \Imaginary(z) > 0\}$, and
\item $\delta_{\Cc_{n,\infty}}(e^{r}ie_1; e_2) \leq e^{r/(2+\alpha)}$ for $r \geq 1$.
\end{enumerate}

We claim that 
\begin{align*}
\abs{f(\Cc_{n,\infty}, z) -f(\Bb_d,0)} \leq 1/n
\end{align*}
for all $z \in \Cc_{n,\infty}$. By the definition of ${\rm BlowUp}(\Cc_n)$, there exist a sequence $x_m \in \Cc_n$, a point $x_\infty \in \Cc_{n,\infty}$, and affine maps $A_m \in \Aff(\Cb^d)$ such that $x_m \rightarrow \infty$ in $\Cc_{n}$ and $A_m(\Cc_n, x_m)$ converges to $(\Cc_{n,\infty}, x_\infty)$. Now fix $z \in \Cc_{n,\infty}$ and a relatively compact convex subdomain $\Oc \subset \Cc_{n,\infty}$ which contains $x_\infty$ and $z$. By the definition of the local Hausdorff topology, $\Oc \subset A_m \Cc_n$ for $m$ sufficiently large. So for $m$ sufficiently large $A_m^{-1}(\Oc) \subset \Cc_n$. Then 
\begin{align*}
K_{\Cc_n}(x_m, A_m^{-1} z) \leq K_{A_m^{-1} \Oc}( x_m, A_m^{-1}  z) = K_{\Oc}(A_m x_m, z)
\end{align*}
and since $A_m x_m \rightarrow x_\infty$ we see that 
\begin{align*}
\limsup_{m \rightarrow \infty} K_{\Cc_n}(x_m, A_m^{-1} z) < \infty.
\end{align*}
Since $K_{\Cc_n}$ is a proper metric on $\Cc_n$ and $x_m$ approaches the boundary of $\Cc_n$, we see that $A_m^{-1} z$ approaches the boundary of $\Cc_n$. But then, since $f$ is continuous and intrinsic, 
\begin{align*}
\abs{f(\Cc_{n,\infty}, z) -f(\Bb_d,0)} = \lim_{m \rightarrow \infty} \abs{f(\Cc_n, A_m^{-1}z) -f(\Bb_d,0)}\leq 1/n.
\end{align*}

Now since $\Kb_d \subset \Xb_{d}$ is compact, we can pass to a subsequence such that $\Cc_{n,\infty}$ converges in $\Xb_d$ to some convex domain $\Cc_\infty$. Since $f$ is continuous, we see that 
\begin{align*}
f(\Cc_\infty, z)  = f(\Bb_d,0)
\end{align*}
for all $z \in \Cc_\infty$. So by hypothesis $\Cc_\infty$ is biholomorphic to the unit ball. On the other hand, by the definition of the local Hausdorff topology, we see that 
\begin{enumerate}
\item $ \Cc_\infty \cap\Span_{\Cb} \left\{ e_2, \dots, e_d \right\} = \emptyset$,  
\item $ \Cc_\infty \cap \Cb \cdot e_1 = \{ z e_1 : \Imaginary(z) > 0\}$, and
\item $\delta_{ \Cc_\infty}(e^{r}ie_1; e_2) \leq e^{r/(2+\alpha)}$ for $r \geq 1$.
\end{enumerate}
Hence we have a contradiction with Proposition~\ref{prop:obstruction}. 

\subsection{The proof of Theorem~\ref{thm:upper_cont}}\label{subsec:thm_upper_cont} This is essentially identical to the proof of Theorem~\ref{thm:cont}. 

\subsection{The proof of Theorem~\ref{thm:squeezing}}\label{subsec:squeezing} The function $(\Cc, x) \in \Xb_{d,0} \rightarrow s_{\Cc}(x)$ is an upper semicontinuous intrinsic function (see~\cite[Proposition 7.1]{Z2016}) and by Theorem 2.1 in~\cite{DGZ2012} if $s_\Omega(x) =1 $ for some $x \in \Omega$, then $\Omega$ is biholomorphic to the unit ball. Hence Theorem~\ref{thm:squeezing} follows from Theorem~\ref{thm:upper_cont}. 

\subsection{K{\"a}hler metrics with controlled geometry}\label{subsec:controlled_geom}

We begin by introducing the following class of metrics on a domain which are informally the K{\"a}hler metrics which have controlled geometry relative to the Kobayashi metic. 

\begin{definition}
Suppose $\Omega \subset \Cb^d$ is a bounded domain and $M > 1$. Let $\Gc_M(\Omega)$ be the set of K{\"a}her metrics $g$ on $\Omega$ (with respect to the standard complex structure)
 with the following properties:
\begin{enumerate}
\item $g$ is a $C^2$ metric, 
\item For all $z \in \Omega$ and $v \in \Cb^d$,  
\begin{align*}
\frac{1}{M} \sqrt{g_z(v,v)} \leq k_\Omega(z;v) \leq M \sqrt{g_z(v,v)}.
\end{align*}
\item If $X, v, w \in \Cb^d$, then 
\begin{align*}
\abs{X(g_z(v,w))} \leq M k_\Omega(z;X)k_\Omega(z;v)k_\Omega(z;w).
\end{align*}
\item If $X,Y, v, w \in \Cb^d$, then 
\begin{align*}
\abs{Y(X(g_z(v,w)))} \leq M k_\Omega(z;Y)k_\Omega(z;X)k_\Omega(z;v)k_\Omega(z;w).
\end{align*} 
\item If $X,Y, v, w \in \Cb^d$ and $z_1,z_2 \in \Omega$, then 
\begin{align*}
& \abs{Y(X(g_{z_1}(v,w))) -Y(X(g_{z_2}(v,w)))} \\
& \quad \quad \leq Mk_\Omega(z;Y)k_\Omega(z;X)k_\Omega(z;v)k_\Omega(z;w) K_\Omega(z_1,z_2).
\end{align*} 
\end{enumerate}
\end{definition}

\begin{definition} For $M,d >0$, define a function $h_M:\Xb_{d,0} \rightarrow \Rb$ by letting $h_{M}(\Cc, x)$ be the infimum of all numbers $\epsilon > 0$ such that there exists a metric $g \in \Gc_M(\Cc)$ with
\begin{align*}
\max_{ v,w \in T_z \Cc \setminus \{0\} } \abs{ H_g(v)- H_g(w) } \leq \epsilon \text{ for all } z \in \overline{B}_{\Cc}(x;1/\epsilon)
\end{align*}
where $\overline{B}_{\Cc}(x;r)$ is the closed ball of radius $r$ about the point $x \in \Cc$ with respect to the Kobayashi distance. 
\end{definition}

In~\cite[Proposition 8.2, 8.3]{Z2016} we proved that $-h_M$ is an upper semi-continuous intrinsic function on $\Xb_{d,0}$ and if $h_M(\Cc,x)=0$ for some $x \in \Cc$ then $\Cc$ is biholomorphic to the unit ball in $\Cb^d$. So Theorem~\ref{thm:upper_cont} implies the following:

\begin{corollary} For any $d,M,\alpha >0$ there exists $\epsilon = \epsilon(d,M,\alpha) > 0$ such that: if $\Cc \subset \Cb^d$ is a bounded convex domain with $C^{2,\alpha}$ boundary and 
\begin{align*}
h_M(\Cc, z) \leq \epsilon
\end{align*}
outside some compact subset of $\Cc$, then $\Cc$ is strongly pseudoconvex.
\end{corollary}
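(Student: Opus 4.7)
The plan is to obtain this corollary as an immediate application of Theorem~\ref{thm:upper_cont} to the function $f := -h_M$. The two inputs are already recorded in the paragraph preceding the corollary: by Propositions~8.2 and~8.3 of~\cite{Z2016}, $-h_M$ is upper semi-continuous and intrinsic on $\Xb_{d,0}$, and $h_M(\Cc,x)=0$ for some $x \in \Cc$ implies $\Cc$ is biholomorphic to $\Bc_d$.

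Next I would pin down $f(\Bc_d,0)$. Inspecting the definition of $\Gc_M(\Omega)$, each constraint only weakens as $M$ increases, so $\Gc_M(\Omega) \subseteq \Gc_{M'}(\Omega)$ whenever $M \leq M'$, and consequently $h_M \geq h_{M'}$ pointwise on $\Xb_{d,0}$. Therefore any conclusion of the corollary for some $M' \geq M$ (with threshold $\epsilon'$) immediately yields the conclusion for $M$ (with the same threshold $\epsilon'$), since the hypothesis $h_M(\Cc,z) \leq \epsilon'$ implies $h_{M'}(\Cc,z) \leq \epsilon'$. Thus we may assume $M \geq M_0$, where $M_0$ is the universal constant from Subsection~\ref{subsec:controlled_geom} guaranteeing that the Bergman metric lies in $\Gc_M$ on every bounded convex domain. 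For such $M$, the Bergman metric $b_{\Bc_d}$ is in $\Gc_M(\Bc_d)$ and has constant holomorphic sectional curvature $-4/(d+1)$, so the infimum defining $h_M(\Bc_d,0)$ is achieved at $0$. Hence $f(\Bc_d,0)=0$.

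Finally I would verify the characterizing hypothesis of Theorem~\ref{thm:upper_cont} for $f$: if $f(\Cc,x) \geq f(\Bc_d,0)=0$ for every $x \in \Cc$, then $h_M(\Cc,x) \leq 0$ everywhere, and since $h_M$ is by definition a non-negative infimum this forces $h_M(\Cc,x)=0$, which by the cited result implies $\Cc$ is biholomorphic to $\Bc_d$. Applying Theorem~\ref{thm:upper_cont} with the given $\alpha$ produces an $\epsilon = \epsilon(d,M,\alpha) > 0$ such that the condition ``$f(\Cc,z) \geq -\epsilon$ outside a compact set,'' which is precisely ``$h_M(\Cc,z) \leq \epsilon$ outside a compact set,'' forces $\Cc$ to be strongly pseudoconvex. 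There is no real obstacle beyond the bookkeeping used to reduce to $M \geq M_0$ and the brief computation $h_M(\Bc_d,0)=0$ described above; the substantive content was already packaged into Theorem~\ref{thm:upper_cont}.
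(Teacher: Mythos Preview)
Your proposal is correct and follows exactly the paper's approach: the paper simply states that the corollary follows from Theorem~\ref{thm:upper_cont} applied to $f=-h_M$, and you have carefully filled in the verification of the hypotheses that the paper leaves implicit, including the computation $h_M(\Bc_d,0)=0$ via the monotonicity-in-$M$ reduction to $M \geq M_0$. The only minor remark is that the $M_0$ you invoke is the one from Subsection~\ref{subsec:bergman} (i.e.\ \cite[Proposition~9.1]{Z2016}) rather than Subsection~\ref{subsec:controlled_geom}.
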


Theorem~\ref{thm:gen_riem} is now a simple consequence of this result. 

\begin{proof}[Proof of Theorem~\ref{thm:gen_riem}]
Fix $\epsilon > 0$ with the the following property: if $\Cc \subset \Cb^d$ is a bounded convex domain with $C^{2,\alpha}$ boundary and  
\begin{align*}
h_M(\Cc,z) \leq 2\epsilon
\end{align*}
outside a compact set of $\Cc$, then $\Cc$ is strongly pseudoconvex.

Now suppose that $\Cc \subset \Cb^d$ is a bounded convex domain with $C^{2,\alpha}$ boundary, $K \subset \Cc$ is compact, and there exists a metric $g \in \Gc_M(\Cc)$ such that 
\begin{align*}
\max_{ v,w \in T_z \Cc \setminus \{0\} } \abs{ H_{g}(v)-H_g(w) } \leq \epsilon \text{ for all } z \in \Cc \setminus K.
\end{align*}
We claim that $\Cc$ is strongly pseudoconvex. 

Since $K_{\Cc}$ is a proper distance on $\Cc$ (see Theorem~\ref{thm:barth}), there exists some compact subset $K^\prime \subset \Cc$ such that $B_{\Cc}(x;1/(2\epsilon)) \subset \Cc \setminus K$ for all $x \in \Cc \setminus K^\prime$. Then, with this choice of $K^\prime$, 
\begin{align*}
h_M(\Cc,x) \leq 2\epsilon
\end{align*}
for all $x \in \Cc \setminus K^\prime$. So by our choice of $\epsilon > 0$, $\Cc$ is strongly pseudoconvex. 
\end{proof}

\subsection{The proof of Theorem~\ref{thm:bergman}}\label{subsec:bergman} In~\cite[Proposition 9.1]{Z2016} we proved that for any $d > 0$ there exists some $M_0 =M_0(d) > 1$ such that: if $\Cc \in \Xb_d$ then $b_{\Cc} \in \Gc_M(\Cc)$ for all $M \geq M_0$. So Theorem~\ref{thm:bergman} is a corollary of Theorem~\ref{thm:gen_riem}.

\appendix

\section{Properties of complex hyperbolic space}

In this section we sketch the proof of Theorem~\ref{thm:cplx_hyp}:

\begin{theorem} If $\gamma_1,\gamma_2:\Rb_{\geq 0} \rightarrow \Bb_d$ are geodesic rays such that 
\begin{align*}
\liminf_{s,t \rightarrow \infty} K_{\Bb_d}(\gamma_1(s), \gamma_2(t)) < +\infty, 
\end{align*}
then there exists $T \in \Rb$ such that 
\begin{align*}
\lim_{t \rightarrow \infty} K_{\Bb_d}(\gamma_1(t), \gamma_2(t+T)) =0.
\end{align*}
Moreover, if the images of $\gamma_1$ and  $\gamma_2$ are contained in the same complex geodesic then 
\begin{align*}
\lim_{t \rightarrow \infty} \frac{1}{t} \log K_{\Bb_d}(\gamma_1(t), \gamma_2(t+T)) = -2
\end{align*}
otherwise 
\begin{align*}
\lim_{t \rightarrow \infty} \frac{1}{t} \log K_{\Bb_d}(\gamma_1(t), \gamma_2(t+T)) = -1.
\end{align*}
\end{theorem}

\begin{proof} The first assertion is a consequence of the Kobayashi distance on $\Bb_d$ being induced by a negatively curved Riemannian metric (it is isometric to complex hyperbolic space), see for instance~\cite[Proposition 4.1]{HI1977}.

To establish the second assertion it is easiest to work with the domain 
\begin{align*}
\Pc_d = \left\{ (z_1, \dots, z_d) : \Imaginary(z_1) > \sum_{i=2}^d \abs{z_i}^2\right\}
\end{align*}
which is biholomorphic to $\Bb_d$. 

Suppose that $\gamma_1, \gamma_2 :\Rb_{\geq 0} \rightarrow \Pc_d$ are geodesic rays with 
\begin{align*}
\lim_{t \rightarrow \infty} K_{\Pc_d}(\gamma_1(t), \gamma_2(t)) =0.
\end{align*}
Using the fact that the biholomorphism group $\Aut_0(\Pc_d)$ of $\Pc_d$ acts transitively on the set of geodesic rays in $\Pc_d$, we can assume that 
\begin{align*}
\gamma_1(t) = ie^{2t}e_1.
\end{align*}
Then we must have 
\begin{align*}
\gamma_2(t) = v+\left( \alpha + i(e^{2t} + \norm{v}^2)\right)e_1
\end{align*}
for some $v \in \Span_{\Cb}\{e_2,\dots, e_d\}$ and $\alpha  \in \Rb$. Moreover, $\gamma_1$ and $\gamma_2$ are contained in the same complex geodesic if and only if $v=0$. 

The estimates on 
\begin{align*}
\lim_{t \rightarrow \infty} \frac{1}{t} \log K_{\Pc_d}(\gamma_1(t), \gamma_2(t))
\end{align*}
will follow from the well known fact that if $V \subset \Cb^d$ is an affine subspace which intersects $\Pc_d$ then 
\begin{align*}
K_{V \cap \Pc_d}(z,w) = K_{\Pc_d}(z,w)
\end{align*}
for all $z,w \in V \cap \Pc_d$. 

First suppose that $v=0$. Then 
\begin{align*}
\lim_{t \rightarrow \infty} \frac{1}{t} \log K_{\Pc_d}(\gamma_1(t), \gamma_2(t)) = \lim_{t \rightarrow \infty} \frac{1}{t} \log K_{\Hc}(ie^{2t}, \alpha+ie^{2t})
\end{align*}
where $\Hc = \{ z \in \Cb : \Imaginary(z) > 0\}$. Then 
\begin{align*}
K_{\Hc}(ie^{2t}, \alpha+ie^{2t}) = \frac{1}{2} \arcosh \left( 1 + \frac{\alpha^2}{2e^{4t}} \right)
\end{align*}
and using the fact that $\arcosh(x) = \log (x +\sqrt{x^2-1})$ we then have 
\begin{align*}
K_{\Hc}(ie^{2t}, \alpha+ie^{2t}) = \frac{1}{2} \log \left( 1 + \frac{\alpha^2}{2e^{4t}}  + \frac{\abs{\alpha}}{\sqrt{2}e^{2t}}\right) = \frac{\abs{\alpha}}{2\sqrt{2}}e^{-2t} + {\rm O} \left( e^{-4t} \right).
\end{align*}
So 
\begin{align*}
\lim_{t \rightarrow \infty} \frac{1}{t} \log K_{\Pc_d}(\gamma_1(t), \gamma_2(t)) = -2.
\end{align*}

Next suppose that $v \neq 0$. Then let $\overline{\gamma}_2(t) = v + i(e^{2t}+\norm{v}^2)e_1$. Since 
\begin{align*}
\lim_{t \rightarrow \infty} \frac{1}{t} \log K_{\Pc_d}(\gamma_2(t), \overline{\gamma}_2(t)) = -2
\end{align*}
it is enough to show that 
\begin{align*}
\lim_{t \rightarrow \infty} \frac{1}{t} \log K_{\Pc_d}(\gamma_1(t), \overline{\gamma}_2(t)) = -1.
\end{align*}
Next for $t$ sufficiently large let
\begin{align*}
s_t = t +\frac{1}{2}\log \left( 1 -\frac{\norm{v}^2}{e^{2t}} \right). 
\end{align*}
Then 
\begin{align*}
K_{\Pc_d}(\overline{\gamma}_2(t), \overline{\gamma}_2(s_t)) = \frac{1}{2}\abs{\log \left( 1 -\frac{\norm{v}^2}{e^{2t}} \right)} = \frac{\norm{v}^2}{2}e^{-2t} + {\rm O}\left(e^{-4t} \right)
\end{align*}
so it is enough to show that 
\begin{align*}
\lim_{t \rightarrow \infty} \frac{1}{t} \log K_{\Pc_d}(\gamma_1(t), \overline{\gamma}_2(s_t)) = -1.
\end{align*}
Now since $\overline{\gamma}_2(s_t) = v + ie^{2t}e_1$ and 
\begin{align*}
\Pc_d \cap \left( ie^{2t} + \Cb \cdot v\right) = \left\{ ie^{2t} + z\frac{v}{\norm{v}} : z \in \Cb, \abs{z} \leq e^t \right\}.
\end{align*}
 we have
\begin{align*}
\lim_{t \rightarrow \infty} \frac{1}{t} \log K_{\Pc_d}(\gamma_1(t), \overline{\gamma}_2(s_t)) = \lim_{t \rightarrow \infty} \frac{1}{t} \log K_{e^t \Db}(0, \norm{v} ) = \lim_{t \rightarrow \infty} \frac{1}{t} \log K_{\Db}(0, e^{-t} \norm{v} ) = -1
\end{align*}
where in the last equality we used the fact that  $K_{\Db}(0,z) = \abs{z} + { \rm O}\left(\abs{z}^2\right)$ for $z$ close to $0$. 

\end{proof}

\bibliographystyle{alpha}
\bibliography{complex_kob}

\end{document}